\documentclass[letterpaper, 11pt,  reqno]{amsart}

\usepackage{amsmath,amssymb,amscd,amsthm,amsxtra, esint}

\usepackage{hyperref} 
\usepackage[margin=1.1in,marginparwidth=1.5cm, marginparsep=0.5cm]{geometry}
\usepackage{scalerel} 
\allowdisplaybreaks[2]

\sloppy

\hfuzz  = 0.5cm 


\setlength{\pdfpagewidth}{8.50in}
\setlength{\pdfpageheight}{11.00in}

\newtheorem{theorem}{Theorem} [section]

\newtheorem{lemma}[theorem]{Lemma}
\newtheorem{proposition}[theorem]{Proposition}
\newtheorem{remark}[theorem]{Remark}

\newtheorem{corollary}[theorem]{Corollary}


\newcommand{\1}{\hspace{0.5mm}\text{I}\hspace{0.5mm}}
\newcommand{\II}{\text{I \hspace{-2.8mm} I} }
\newcommand{\III}{\text{I \hspace{-2.9mm} I \hspace{-2.9mm} I}}


\newcommand{\noi}{\noindent}
\newcommand{\Z}{\mathbb{Z}}
\newcommand{\R}{\mathbb{R}}

\newcommand{\T}{\mathbb{T}}

\let\Re=\undefined\DeclareMathOperator*{\Re}{Re}
\let\Im=\undefined\DeclareMathOperator*{\Im}{Im}

\newcommand{\RR}{\mathcal{R}}

\newcommand{\NN}{\mathcal{N}}

\newcommand{\F}{\mathcal{F}}

\def\norm#1{\|#1\|}

\newcommand{\al}{\alpha}
\newcommand{\be}{\beta}
\newcommand{\dl}{\delta}

\newcommand{\nb}{\nabla}

\newcommand{\eps}{\varepsilon}

\newcommand{\g}{\gamma}
\newcommand{\G}{\Gamma}
\newcommand{\ld}{\lambda}
\newcommand{\Ld}{\Lambda}
\newcommand{\s}{\sigma}

\newcommand{\ft}{\widehat}

\newcommand{\cj}{\overline}
\newcommand{\dx}{\partial_x}
\newcommand{\dt}{\partial_t}

\newcommand{\embeds}{\hookrightarrow}

\renewcommand{\t}{\tau}

\renewcommand{\l}{\ell}

\renewcommand{\t}{\tau}

\newcommand{\les}{\lesssim}
\newcommand{\ges}{\gtrsim}

\newcommand{\jb}[1]
{\langle #1 \rangle}

\newcommand{\ind}{\mathbf 1}

\newcommand{\M}{\mathcal{M}}
\def\e{\varepsilon}

\newcommand{\N}{\mathbb{N}}

\newcommand{\J}{\mathcal{J}}

\newcommand{\nm}{n_{\text{max}}}




\usepackage{tikz}

\usetikzlibrary{shapes.misc}
\usetikzlibrary{shapes.symbols}
\usetikzlibrary{shapes.geometric}
\tikzset{
	dot/.style={circle,fill=black,draw=black,inner sep=0pt,minimum size=0.5mm},
	>=stealth,
	}
\tikzset{
	ddot/.style={circle,fill=white,draw=black,inner sep=0pt,minimum size=0.8mm},
	>=stealth,
	}


\tikzset{decision/.style={ 
        draw,
        diamond,
        aspect=1.5
    }}

\tikzset{dia2/.style
={diamond,fill=white,draw=black,inner sep=0pt,minimum size=1mm},
	>=stealth,
	}

\tikzset{dia/.style
={star,fill=black,draw=black,inner sep=0pt,minimum size=1mm},
	>=stealth,
	}


\makeatletter
\def\DeclareSymbol#1#2#3{\expandafter\gdef\csname MH@symb@#1\endcsname{\tikz[baseline=#2,scale=0.15]{#3}}}
\def\<#1>{\csname MH@symb@#1\endcsname}
\makeatother

\DeclareSymbol{X}{-2.4}{\node[dot] {};}
\DeclareSymbol{1}{0}{\draw[white] (-.4,0) -- (.4,0); \draw (0,0)  -- (0,1.2) node[dot] {};}
\DeclareSymbol{2}{0}{\draw (-0.5,1.2) node[dot] {} -- (0,0) -- (0.5,1.2) node[dot] {};}

\DeclareSymbol{1}{-2.7}
 {
  \draw (0,0) node[dot]{};}

\DeclareSymbol{1'}{-2.7}
 {\draw (0,0) node[ddot]{};}

\DeclareSymbol{1''}{-2.7}
 {\draw (0,0) node[dia]{};}

\DeclareSymbol{3}{0}
 {\draw (0,0) node[dot]{} -- (0,1.2) node[ddot] {}; 
 \draw (-1.2,0) node[dot] {} -- (0,1.2)node[ddot] {} -- (1.2,0) node[dot] {};}

\DeclareSymbol{3'}{0}
 {\draw (0,0) node[dia]{} -- (0,1.2) node[ddot] {}; 
 \draw (-1.2,0) node[dia] {} -- (0,1.2)node[ddot] {} -- (1.2,0) node[dia] {};}

\DeclareSymbol{31}{-3}{
\draw (0,-1)node[dot] {} -- (0,0) node[ddot] {}-- (0.9, -1) node[dot] {}; 
\draw (0,-1)node[dot] {} -- (0,0) node[ddot] {}-- (-0.9, -1) node[dot] {}; 
\draw (0,0)node[ddot] {} -- (1.3,1) node[ddot] {}-- (1.3, 0) node[dot] {}; 
\draw  (1.3,1) node[ddot] {}-- (2.6, 0) node[dot] {}; 
}

\DeclareSymbol{32}{-3}{
\draw (0,-1)node[dot] {} -- (0,0) node[ddot] {}-- (0.9, -1) node[dot] {}; 
\draw (0,-1)node[dot] {} -- (0,0) node[ddot] {}-- (-0.9, -1) node[dot] {}; 
\draw (0,0)node[ddot] {} -- (0,1) node[ddot] {}-- (0.9, 0) node[dot] {}; 
\draw (0,0)node[ddot] {} -- (0,1) node[ddot] {}-- (-0.9, 0) node[dot] {}; 
}

\DeclareSymbol{33}{-3}{
\draw (2.6,-1)node[dot] {} -- (2.6,0) node[ddot] {}-- (3.5, -1) node[dot] {}; 
\draw (2.6,-1)node[dot] {} -- (2.6,0) node[ddot] {}-- (1.7, -1) node[dot] {}; 
\draw (0,0)node[dot] {} -- (1.3,1) node[ddot] {}-- (1.3, 0) node[dot] {}; 
\draw  (1.3,1) node[ddot] {}-- (2.6, 0) node[ddot] {}; 
}


\DeclareSymbol{3''}{0}
 {\draw (0,0) node[dia]{} -- (0,1.2) node[ddot] {}; 
 \draw (-1.2,0) node[dot] {} -- (0,1.2)node[ddot] {} -- (1.2,0) node[dia] {};}

\DeclareSymbol{3'''}{0}
 {\draw (0,0) node[dot]{} -- (0,1.2) node[ddot] {}; 
 \draw (-1.2,0) node[dot] {} -- (0,1.2)node[ddot] {} -- (1.2,0) node[dia] {};}

\DeclareSymbol{31'}{-3}{
\draw (0,-1.2)node[dia] {} -- (0,0) node[ddot] {}-- (0.9, -1.2) node[dia] {}; 
\draw (0,-1.2)node[dia] {} -- (0,0) node[ddot] {}-- (-0.9, -1.2) node[dia] {}; 
\draw (0,0)node[ddot] {} -- (1.3,1.2) node[ddot] {}-- (1.3, 0) node[dia] {}; 
\draw  (1.3,1.2) node[ddot] {}-- (2.6, 0) node[dia] {}; 
}

\DeclareSymbol{32'}{-3}{
\draw (0,-1.2)node[dia] {} -- (0,0) node[ddot] {}-- (0.9, -1.2) node[dia] {}; 
\draw (0,-1.2)node[dia] {} -- (0,0) node[ddot] {}-- (-0.9, -1.2) node[dia] {}; 
\draw (0,0)node[ddot] {} -- (0,1.2) node[ddot] {}-- (0.9, 0) node[dia] {}; 
\draw (0,0)node[ddot] {} -- (0,1.2) node[ddot] {}-- (-0.9, 0) node[dot] {}; 
}

\DeclareSymbol{33'}{-3}{
\draw (2.6,-1.2)node[dia] {} -- (2.6,0) node[ddot] {}-- (3.5, -1.2) node[dia] {}; 
\draw (2.6,-1.2)node[dia] {} -- (2.6,0) node[ddot] {}-- (1.7, -1.2) node[dia] {}; 
\draw (0,0)node[dot] {} -- (1.3,1.2) node[ddot] {}-- (1.3, 0) node[dot] {}; 
\draw  (1.3,1.2) node[ddot] {}-- (2.6, 0) node[ddot] {}; 
}

%
%
%
%
%
%

\newtheorem*{ackno}{Acknowledgments}

\numberwithin{equation}{section}
\numberwithin{theorem}{section}

\begin{document}

\title[GWP of 1D cubic fractional NLS in negative Sobolev spaces]
{Global well-posedness of one-dimensional cubic fractional nonlinear Schr\"odinger equations in negative Sobolev spaces}

\author[E.~Brun, G.~Li, R.~Liu, and Y.~Zine]
{Enguerrand~Brun, Guopeng~Li, Ruoyuan~Liu, and  Younes~Zine}

\address{
Enguerrand Brun\\
Ecole Normale Sup\'erieure de Lyon\\
 46, all\'ee d'Italie\\
 69364-Lyon Cedex 07\\
 France} 
\email{enguerrand.brun@ens-lyon.fr}

\address{
Guopeng Li\\
School of Mathematics\\
The University of Edinburgh\\
and The Maxwell Institute for the Mathematical Sciences\\
James Clerk Maxwell Building\\
The King's Buildings\\
Peter Guthrie Tait Road\\
Edinburgh\\ 
EH9 3FD\\
United Kingdom} 
\email{guopeng.li@ed.ac.uk}

\address{
Ruoyuan Liu\\
School of Mathematics\\
The University of Edinburgh\\
and The Maxwell Institute for the Mathematical Sciences\\
James Clerk Maxwell Building\\
The King's Buildings\\
Peter Guthrie Tait Road\\
Edinburgh\\ 
EH9 3FD\\
United Kingdom}
\email{ruoyuan.liu@ed.ac.uk}

\address{
Younes Zine\\
School of Mathematics\\
The University of Edinburgh\\
and The Maxwell Institute for the Mathematical Sciences\\
James Clerk Maxwell Building\\
The King's Buildings\\
Peter Guthrie Tait Road\\
Edinburgh\\ 
EH9 3FD\\
United Kingdom\\
and Chair of Probability and PDEs\\
EPFL SB MATH\\
PROPDE\\
MA C2 647\\
CH-1015 Lausanne\\
Switzerland}
\email{younes.zine@epfl.ch}

\begin{abstract}
We study the Cauchy problem for the cubic fractional nonlinear Schr\"odinger equation (fNLS) on the real line and on the circle. In particular, we prove global well-posedness of the cubic fNLS with all orders of dispersion higher than the usual Schr\"odinger equation in negative Sobolev spaces. On the real line, our well-posedness result is sharp in the sense that a contraction argument does not work below the threshold regularity. On the circle, due to ill-posedness of the cubic fNLS in negative Sobolev spaces, we study the renormalized cubic fNLS. In order to overcome the failure of local uniform continuity of the solution map in negative Sobolev spaces, by applying a gauge transform and partially iterating the Duhamel formulation, we study the resulting equation with a cubic-quintic nonlinearity. In proving uniqueness, we present full details justifying the use of the normal form reduction for rough solutions, which seem to be missing from the existing literature. Our well-posedness result on the circle extends those in Miyaji-Tsutsumi (2018) and Oh-Wang (2018) to the endpoint regularity.
\end{abstract}

\subjclass[2020]{35Q55, 35A01, 35A02}

\keywords{fractional nonlinear Schr\"odinger equation; gauge transform; global well-posedness; uniqueness;
$I$-method}

\maketitle

\tableofcontents

\section{Introduction}

\subsection{Cubic fractional nonlinear Schr\"odinger equations}
\label{SUB:intro}
In this paper, 
we consider the Cauchy problem for the cubic fractional nonlinear Schr\"odinger equation (fNLS)
on the real line~$\R$ and on the circle $\T = \R/(2\pi \Z)$:
\begin{align}
\begin{cases}
i \dt u   = D^{\al} u  + | u |  ^2 u  \\
u|_{t = 0} = u_0, 
\end{cases}
\ (t, x) \in \R \times \M, 
\label{fnls1}
\end{align}

\noi
where $\M = \R$ or $\T$, $ \al > 0 $, $ u  $ is a complex-valued function. Here, $D^\al = ( - \partial_x^2 )^{\frac{\al}{2}}$ denotes the Fourier multiplier operator defined by
\[
 \mathcal{F} \big( D^\al f \big) (\xi) =   | \xi | ^{\al} \ft{f}(\xi) \qquad \text{for $ \xi \in \ft{\M} $} ,
\]

\noi
where $\mathcal{F}$ and $\ft{\cdot}$ denote the Fourier transform and
\begin{align*}
\ft \M = \begin{cases}
 \R & \text{if } \M = \R, \\
 \Z & \text{if } \M = \T.
\end{cases}
\end{align*}

\noi
The equation is called defocusing if the sign in front of the nonlinearity is a plus (as in \eqref{fnls1})
and is called focusing if the sign is a minus. In this paper, this sign is irrelevant in our analysis, and so we only work with the defocusing case.


The fractional Schr\"odinger equations were first introduced in the theory of the fractional quantum mechanics, where the Feynmann path integrals approach is generalized to $\al$-stable L\'evy process \cite{La}. 
Different choices of $\al$ represent different physical contexts. For instance,
when $\al=2$, 
the equation \eqref{fnls1} is the cubic nonlinear Schr\"odinger equation which appears as an important model in the study of nonlinear optics, fluids and plasma physics; see \cite{SS} for a general survey.
When $\al = 3$, the equation \eqref{fnls1} becomes the third-order NLS (3NLS) equation, 
which appears as a mathematical model for nonlinear pulse propagation phenomena in various fields of physics, especially in nonlinear optics; see \cite{HK, Ag}.
For $\al = 4$,
the equation \eqref{fnls1} corresponds to the cubic fourth-order NLS (4NLS) and has applications in the study of solitons in magnetic materials; see \cite{IK, Tur}.
We also remark that fNLS for $0<\al < 2$ also arises in various physical settings; see, for example, \cite{ES, FL, KLS, IP}.
In this paper, we only discuss the large dispersion regime of \eqref{fnls1}, i.e.~$\al>2$.

Note that the cubic fNLS \eqref{fnls1} enjoys a scaling symmetry,
which states that if $u(t, x)$ is a solution to \eqref{fnls1} on $ \R$, then $u^\ld$ defined by
\begin{align}
u^{\lambda}(t,x):=\lambda^{-\frac{\al}{2}}u( {\lambda^{-\al}}t, {\lambda}^{-1}x)
\label{scaling}
\end{align}

\noi
is also a solution to \eqref{fnls1} for any $\ld > 0$. 
Associated to this scaling symmetry, one defines the scaling-critical Sobolev index $s_c$
such that the
homogeneous $\dot H^{s_c}$-norm is invariant under the dilation symmetry. 
A simple calculation yields that
\[
s_c=\frac{1-\al}{2}.
\]

\noi
On the one hand, the scaling argument provides heuristics indicating that the cubic fNLS is well-posed in $H^s$ for $s > s_c$ and is ill-posed in $H^s$ for $s < s_c$. 
For instance, for the cubic NLS
(i.e.~\eqref{fnls1} with $\al = 2$) on $\M = \R$ or $\T$, Christ-Colliander-Tao \cite{CCT0} and Kishimoto \cite{Ki} proved
ill-posedness of \eqref{fnls1} in the sense of norm inflation in $H^s (\M)$ for $s \leq -\frac12 = s_c$; see also \cite{Oh, OWill}. 
On the other hand, these heuristics are known to often fail in negative Sobolev spaces.
Indeed, in \cite{CP}, Choffrut-Pocovnicu proved norm inflation of \eqref{fnls1} on $\M = \R$ or $\T$ above $s_c$: when $\al > 2$, norm inflation of \eqref{fnls1} holds in $H^s(\M)$ for $s<\frac{1-2\al}{6}$, which is above $s_c = \frac{1 - \al}{2}$ given $\al > 2$.

There are also results on other types of ill-posedness of \eqref{fnls1} above the
scaling critical regularity.
For instance, for the cubic NLS (i.e.~$\al = 2$) on $\T$, Christ-Colliander-Tao \cite{CCT1}
and Molinet~\cite{Mo} showed the failure of continuity of the solution map in 
$H^s(\T)$ for $s < 0$.
Moreover, Guo-Oh \cite{GO} 
and Oh-Wang \cite{OW}
proved
non-existence of solutions of the cubic NLS ($\al = 2$) and 4NLS ($\al = 4$)
on $\T$ if the initial data lies in $H^s(\T) \setminus  L^2(\T)$,
where $s \in (-\frac18 , 0)$ and $s \in (-\frac{9}{20} , 0)$, respectively.
We point out that these ill-posedness results no longer hold if we remove a
certain resonant term from the nonlinearity. See Subsection \ref{SUBSEC:introT} below for this removal procedure.
However, 
the norm inflation in
\cite{CP} mentioned above still holds even after we remove this resonant term.

Our
main interest in this paper is to establish well-posedness theory for the cubic fNLS \eqref{fnls1} on $\M = \R$ and $\T$. Let us briefly go over previous well-posedness results of \eqref{fnls1}, starting from~\eqref{fnls1} on the real line $\R$. For $\al = 2$, Tsutsumi \cite{Tsu} proved global well-posedness of~\eqref{fnls1} in $L^2 (\R)$. When $1 < \al < 2$, Cho-Hwang-Kwon-Lee \cite{CHKL} showed local well-posedness of~\eqref{fnls1} in $H^s (\R)$ for $s \geq \frac{2 - \al}{4}$. In the larger dispersion regime (i.e.~$\al > 2$), one can study well-posedness theory in negative Sobolev spaces. Indeed, Seong \cite{kihoon} showed that when $\al = 4$, the equation \eqref{fnls1} is globally well-posed in $H^s (\R)$ for $s \geq -\frac 12$. In this paper, we also establish global well-posedness of \eqref{fnls1}, but with a general $\al > 2$, in negative Sobolev spaces.

The cubic fNLS \eqref{fnls1} on the circle $\T$ has also been studied. For $\al = 2$, Bourgain \cite{BO1} proved global well-posedness for \eqref{fnls1} in $L^2 (\T)$. When $1 < \al < 2$, Demirbas-Erdo\v{g}an-Tzirakis \cite{DET} proved local well-posedness of \eqref{fnls1} in $H^s (\T)$ for $s > \frac{2 - \al}{4}$ and global well-posedness of \eqref{fnls1} for $s > \frac{5\al + 1}{12}$. In the same range of $\al$, the authors in \cite{CHKL} then showed local well-posedness of \eqref{fnls1} at the endpoint $s = \frac{2 - \al}{4}$. The larger dispersion regime (i.e.~$\al > 2$) of \eqref{fnls1} was also studied in some previous works, including Miyaji-Tsutsumi \cite{MT1, MT} with a third order dispersion (i.e.~$\al = 3$; see Remark \ref{RMK:3NLS} below) and Oh-Wang \cite{OW} with 4NLS (i.e.~$\al = 4$). These works will also be mentioned in more details below. In this paper, we establish global well-posedness of the cubci fNLS \eqref{fnls1} with a general $\al > 2$ in negative Sobolev spaces.


\subsection{fNLS on the real line}
\label{SUBSEC:fNLS_R}
In this section, we consider well-posedness for the cubic fNLS~\eqref{fnls1} on the real line $\R$ in negative Sobolev spaces.
We begin by stating our local well-posedness result for the cubic fNLS \eqref{fnls1}.

\begin{theorem}[Local well-posedness on $\R$]
\label{TM:LWP1}
Let $\al>2$ and $\frac{2-\al}{4} \leq s<0$.
Then, the
cubic fNLS \eqref{fnls1}
is locally well-posed in $H^s(\R )$. More precisely, given any $u_0 \in H^s (\R)$, there exists $T = T(\| u_0 \|_{H^s}) > 0$ and a unique solution $u \in C([-T, T]; H^s (\R))$ to \eqref{fnls1} with $u|_{t = 0} = u_0$.
\end{theorem}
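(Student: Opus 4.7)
The plan is to prove local well-posedness by a Picard iteration in Bourgain's $X^{s,b}$-spaces adapted to the linear fractional Schr\"odinger propagator $e^{-itD^\alpha}$. I would define
\[
\|u\|_{X^{s,b}(\R\times\R)} = \bigl\|\jb{\xi}^s \jb{\tau - |\xi|^\alpha}^b \ft u(\tau,\xi)\bigr\|_{L^2_{\tau,\xi}}
\]
and work with the time-restricted version $X^{s,b}_T$. The Duhamel formulation
\[
u(t) = e^{-itD^\alpha} u_0 - i\int_0^t e^{-i(t-t')D^\alpha}\bigl(|u|^2 u\bigr)(t')\,dt'
\]
together with the standard linear and energy estimates
\[
\bigl\|e^{-itD^\alpha} u_0\bigr\|_{X^{s,\frac12+\eps}_T} \lesssim \|u_0\|_{H^s}, \qquad \Bigl\|\int_0^t e^{-i(t-t')D^\alpha}F(t')\,dt'\Bigr\|_{X^{s,\frac12+\eps}_T} \lesssim T^{0+}\|F\|_{X^{s,-\frac12+\eps}_T}
\]
reduces the problem to establishing the trilinear estimate
\[
\bigl\|u_1\,\overline{u_2}\,u_3\bigr\|_{X^{s,-\frac12+\eps}_T} \lesssim \prod_{j=1}^{3}\|u_j\|_{X^{s,\frac12+\eps}_T},
\]
together with the analogous difference estimate for contraction.

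The main technical step is the trilinear estimate. By duality, it becomes a weighted four-linear convolution bound on the Fourier side: after Littlewood--Paley decomposition into dyadic pieces with spatial frequencies $|\xi_j| \sim N_j$ and modulations $\jb{\tau_j - |\xi_j|^\alpha} \sim L_j$, one uses the resonance identity
\[
\tau_0 - \tau_1 + \tau_2 - \tau_3 = |\xi_0|^\alpha - |\xi_1|^\alpha + |\xi_2|^\alpha - |\xi_3|^\alpha =: \Phi(\xi_1,\xi_2,\xi_3),
\]
valid on the hyperplane $\xi_0 = \xi_1 - \xi_2 + \xi_3$. The crucial structural gain for $\alpha > 2$ is a lower bound of the form $|\Phi| \gtrsim N_{\max}^{\alpha-2}\,N_{(1)}N_{(2)}$ in non-resonant regimes, where $N_{(1)} \geq N_{(2)}$ are the two largest frequencies among $N_1, N_2, N_3$ other than the pairing partner; this forces $\max_j L_j \gtrsim N_{\max}^{\alpha-2} N_{(1)}N_{(2)}$ and supplies precisely the smoothing needed to cover $s$ down to $\tfrac{2-\alpha}{4}$. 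The non-resonant interactions are then handled by combining this modulation gain with bilinear $L^2_{t,x}$ Strichartz-type bounds adapted to the symbol $|\xi|^\alpha$ (whose curvature $\alpha(\alpha-1)|\xi|^{\alpha-2}$ provides the required $L^4_{t,x}$ and refined bilinear estimates on $\R$), while the (near-)resonant interactions, where one input frequency is comparable to the output, are treated directly via a Plancherel-type pairing that simply transfers $H^s$ regularity from one input to the output.

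The main obstacle will be the endpoint $s = \tfrac{2-\alpha}{4}$, where one expects the summation over dyadic blocks $\{N_j, L_j\}$ to be only logarithmically convergent and where all cancellations must be accounted for with no slack. The strategy is to organize the case analysis by the relative sizes of $N_1, N_2, N_3$ and the output $N_0$, isolating the high-high-to-low regime (which benefits the most from the $\alpha > 2$ dispersion), the high-low-low regime (where the bilinear Strichartz on $\R$ yields the sharp gain), and the resonant regime $N_0 \sim N_{\max}$ with two inputs near the output; in each case the power of $N_{\max}^{\alpha-2}$ from $|\Phi|$ precisely balances the deficit $2s = \tfrac{2-\alpha}{2}$ from four factors of $\jb{\xi}^s$ times the Plancherel rearrangement. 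Once the trilinear estimate is established, contraction mapping on a ball of $X^{s,\frac12+\eps}_T$ yields the unique solution for $T = T(\|u_0\|_{H^s}) > 0$, and continuity in time with values in $H^s(\R)$ follows from the embedding $X^{s,\frac12+\eps}_T \embeds C([-T,T];H^s(\R))$.
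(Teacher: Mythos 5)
Your proposal is correct and follows essentially the same route as the paper: Duhamel formulation plus contraction in $X^{s,\frac12+\eps}_T$, reduced to the trilinear estimate, which is proved by duality together with the resonance lower bound $\big||\xi_1|^\al-|\xi_2|^\al+|\xi_3|^\al-|\xi_4|^\al\big|\ges|\xi_1+\xi_2||\xi_2+\xi_3||\xi_{\max}|^{\al-2}$ (Lemma \ref{LEM:factor}) and the linear/bilinear Strichartz estimates adapted to $e^{-itD^\al}$. The paper executes the dyadic case analysis by adapting Seong's argument for $\al=4$, which is exactly the organization you outline.
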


Our proof of Theorem \ref{TM:LWP1} is based on the Fourier restriction norm method, and so the uniqueness statement in Theorem \ref{TM:LWP1} holds only in (the local-in-time version of) the relevant $X^{s, b}$-space; see Subsection \ref{SUBSEC:spaces} below.
The main task in proving local well-posedness in Theorem \ref{TM:LWP1} is to establish a trilinear estimate; see Proposition \ref{PROP:triR} below. 

We point out that the regularity $s = \frac{2 - \al}{4}$ in Theorem \ref{TM:LWP1} is a threshold below which local well-posedness of \eqref{fnls1} on $\R$ cannot be established by a contraction argument; see Remark \ref{RMK:low} below. 
Note that in \cite{kihoon}, the author showed that the 4NLS (i.e.~\eqref{fnls1} with $\al=4$) is locally well-posed in $H^s(\R)$ for $s \geq -\frac12$, which agrees with Theorem \ref{TM:LWP1} when $\al=4$.

\begin{remark} \rm
\label{RMK:low}
Our local well-posedness result in Theorem \ref{TM:LWP1} is sharp in the sense that the solution map fails to be locally uniformly continuous in $H^s (\R)$ for $\frac{(2 - 3 \al)(1 - \al)}{4 (1 - 2\al)} < s < \frac{2 - \al}{4}$, where $\al > 2$. This can be shown from the same way as in \cite{CHKL}, where the authors established failure of local uniform continuity of \eqref{fnls1} for $\al < 2$. In particular, this failure of local uniform continuity means that well-posedness of \eqref{fnls1} in $H^s (\R)$ for $s < \frac{2 - \al}{4}$ cannot be handled by a fixed point argument.

As mentioned above, the authors in \cite{CP} proved norm inflation of \eqref{fnls1} in $H^s (\R)$ for $s < \frac{1 - 2\al}{6}$ with $\al > 2$. Hence, for \eqref{fnls1} with $\al > 2$, there is a gap $\frac{1 - 2\al}{6} \leq s < \frac{2 - \al}{4}$ between local well-posedness and ill-posedness.
\end{remark}

Next, we would like to extend Theorem \ref{TM:LWP1} globally-in-time. Our global well-posedness result for the cubic fNLS \eqref{fnls1} reads as follows. 


\begin{theorem}[Global well-posedness on $\R$]
\label{TM:GWP1}
Let $\al>2$ and $\frac{2-\al}{4} \leq s<0$.
 Then, the
cubic fNLS \eqref{fnls1}
 is globally well-posed in $H^s(\R )$. More precisely, given any $u_0 \in H^s (\R)$ and any $T > 0$, there exists a unique solution $u \in C([-T, T]; H^s (\R))$ to \eqref{fnls1} with $u|_{t = 0} = u_0$.
\end{theorem}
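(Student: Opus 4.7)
My plan is to globalize the local solution from Theorem~\ref{TM:LWP1} via the $I$-method of Colliander--Keel--Staffilani--Takaoka--Tao, combined with the rescaling symmetry~\eqref{scaling}. Since $s<0$, the $L^2$-mass conservation is not directly available; instead, I introduce a smoothing Fourier multiplier $I_N$ with symbol $m_N(\xi)$ equal to $1$ for $|\xi|\le N$ and to $N^{-s}|\xi|^{s}$ for $|\xi|\ge 2N$. Then $I_N$ maps $H^s(\R)$ into $L^2(\R)$ with $\|I_N u\|_{L^2}\sim N^{-s}\|u\|_{H^s}$, and the task reduces to tracking the slow growth of the modified mass $\|I_N u\|_{L^2}^2$ in time.

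As a preliminary reduction, I would exploit the scaling~\eqref{scaling}. A short computation gives
\[
\|u^{\lambda}_0\|_{H^{s}(\R)}\;\lesssim\;\lambda^{(1-\alpha-2s)/2}\|u_0\|_{H^{s}(\R)},
\]
and the exponent is negative because $s>s_c=\frac{1-\alpha}{2}$. Consequently, given $T>0$ and $u_0\in H^s(\R)$, after choosing $\lambda=\lambda(u_0,T,N)$ sufficiently large, I may assume $\|u^\lambda_0\|_{H^s}$ is as small as needed, and constructing a solution to~\eqref{fnls1} on $[0,T]$ for $u_0$ reduces to constructing one on $[0,\lambda^\alpha T]$ for the rescaled datum $u^\lambda_0$.

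The heart of the matter is an almost conservation law. Since $I_N$ does not commute with the cubic nonlinearity, $I_N u$ satisfies $i\partial_t(I_N u)=D^\alpha(I_N u)+I_N(|u|^2 u)$, and so
\[
\tfrac{d}{dt}\|I_N u\|_{L^2}^2=2\,\Im\!\int_{\R}\!\bigl(I_N(|u|^2 u)-|I_N u|^2(I_N u)\bigr)\,\overline{I_N u}\,dx,
\]
the integrand vanishing when all four frequencies sit below~$N$. The goal is to prove that, on any local existence interval $[0,\delta]$ for the $I$-modified equation,
\[
\Bigl|\|I_N u(\delta)\|_{L^2}^2-\|I_N u(0)\|_{L^2}^2\Bigr|\;\lesssim\;N^{-\beta}\,\|I_N u\|_{X^{0,1/2+}_\delta}^4
\]
for some $\beta=\beta(s,\alpha)>0$. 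Establishing this commutator estimate is the main obstacle: it requires a quadrilinear refinement of the trilinear estimate of Proposition~\ref{PROP:triR}, in which, after Littlewood--Paley decomposition and a careful case-by-case resonance analysis, a fraction of the multilinear gain already available above the threshold $s=\frac{2-\alpha}{4}$ is converted into a negative power of $N$ whenever at least one frequency is comparable to~$N$.

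Once the almost conservation law is in hand, I iterate the $I$-version of Theorem~\ref{TM:LWP1}: if $\lambda$ is chosen so that $\|I_N u^\lambda_0\|_{L^2}$ is of order $O(1)$, each unit-size step raises the modified mass by at most $N^{-\beta}$, so covering the scaled time interval $[0,\lambda^\alpha T]$ is possible provided
\[
\lambda^{\alpha}\,T\,N^{-\beta}\;\ll\;\|I_N u^\lambda_0\|_{L^2}^2\;\sim\;N^{-2s}\lambda^{1-\alpha-2s}\|u_0\|_{H^s}^2.
\]
A joint choice $\lambda=N^{\theta}$ with $\theta>0$ sufficiently small (depending on $\alpha,s,\beta$) makes the right-hand side dominate; taking $N$ large then closes the iteration. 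Undoing the scaling yields the desired solution in $C([0,T];H^s(\R))$; uniqueness is inherited from Theorem~\ref{TM:LWP1}, and arbitrariness of $T$ completes the proof of Theorem~\ref{TM:GWP1}.
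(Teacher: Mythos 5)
Your overall strategy --- rescaling plus the $I$-method, iterating the small-data local theory for the $I$-system on unit time intervals --- is exactly the paper's strategy, and the scaling bookkeeping ($\|Iu_0^\ld\|_{L^2}\les \ld^{\frac{1-\al}{2}-s}N^{-s}\|u_0\|_{H^s}$, choose $\ld\sim N^{\frac{2s}{1-\al-2s}}$) matches \eqref{NT}. The gap is quantitative, and it is not a removable one: you assert an almost conservation law with decay $N^{-\beta}$ for ``some $\beta=\beta(s,\al)>0$,'' but an arbitrary positive $\beta$ does not close the iteration. Running your own final inequality with $\|I_Nu_0^\ld\|_{L^2}=O(1)$, the number of unit steps needed is $\ld^\al T\sim N^{\frac{-2\al s}{\al-1+2s}}T$, so you must have $\beta>\frac{-2\al s}{\al-1+2s}$; at the endpoint $s=\frac{2-\al}{4}$ this threshold equals $\al-2$. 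Nothing in your sketch produces a specific $\beta$, let alone one exceeding $\al-2$, and the ``fraction of the multilinear gain'' heuristic for the raw commutator $\tfrac{d}{dt}M(Iu)=\Ld_4(M_4;u)$ is precisely the first-generation $I$-method, which is not shown (and is not expected, for the full range of $s$) to reach that rate.

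The paper closes this gap by passing to the second-generation $I$-method: it adds the correction term $\Ld_4(\s_4;u)$ with $\s_4=-iM_4/(|\xi_1|^\al-|\xi_2|^\al+|\xi_3|^\al-|\xi_4|^\al)$ to form $M^4(Iu)$ in \eqref{mEng}, so that the quartic contribution cancels exactly and only a sextic term $\Ld_6$ survives in \eqref{dM4}. Lemma \ref{LEM:si4R} (via the resonance lower bound of Lemma \ref{LEM:factor}) then gives the pointwise gain on $\s_4$, Lemma \ref{LEM:Mdiff} controls $|M^4(Iu)-M(Iu)|$, and Proposition \ref{PROP:aclaw} establishes the increment bound $N^{-2\al+2+\dl}\|Iu\|_{X_J^{0,\frac12+\eps}}^6$ by a case-by-case analysis using the bilinear Strichartz estimate \eqref{bistr} and the $L^4_tL^\infty_x$ estimate \eqref{str1}. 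Since $2\al-2>\al-2$, this rate beats the threshold with room to spare and yields \eqref{NT}. If you want to keep your quadrilinear (uncorrected) formulation, you must either compute the actual decay it yields and verify it exceeds $\frac{-2\al s}{\al-1+2s}$ for all $\frac{2-\al}{4}\leq s<0$, or introduce the correction term as the paper does.
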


In Theorem \ref{TM:GWP1}, the uniqueness holds in the sense that for any $t_0 \in \R$, there exists a time interval $I(t_0) \ni t_0$ such that the solution $u$ to \eqref{fnls1} is unique in the relevant $X^{s, b}$-space restricted to the time interval $I(t_0)$; see Subsection \ref{SUBSEC:spaces} below. Note that as in \cite{kihoon}, we are able to cover global well-posedness of \eqref{fnls1} for the full range of $s < 0$ where local well-posedness holds.

To prove Theorem \ref{TM:GWP1}, we need to obtain an a priori control of the $H^s$-norm of the solution and then iterate the local-in-time argument. Note that smooth solutions to the cubic fNLS~\eqref{fnls1} conserve the mass
\begin{align*}
M(u (t)) := \int_\R |u(t, x)|^2 dx,
\end{align*}

\noi
which provides an a priori control of the $L^2$-norm of the solution. However, if $u$ is a solution to~\eqref{fnls1} in $H^s(\R)$ for $s < 0$, we have in general $M(u) = \infty$. To deal with this issue,
we use
the $I$-method (also known as the method of almost conservation laws) as in \cite{kihoon}, which was
first introduced by
Colliander-Keel-Staffilani-Takaoka-Tao \cite{CKSTT0, CKSTT1}.

Let us briefly go over the main idea of the $I$-method, which relies on a smoothing operator
$I = I_N$ (known as the $I$-operator) mapping from $H^s(\R)$ to $L^2(\R)$.
See \eqref{Iop1} for the precise definition and Section \ref{SEC:Imd} for the discussion.
Thanks to the smoothing properties of the $I$-operator, the modified mass
\[
M(I u)=\int_{\R} | I u|^2 dx
\]

\noi
is finite for $u\in H^s(\R)$.
More importantly, the modified mass $M(I u)$ provides an a priori control of $\| u\|_{H^s}^2$.
Thus, the main task becomes controlling the growth of the modified mass $M(I u)$. See Subsection \ref{SEC:Imd} for further details.


\begin{remark} \rm
From the proof of global well-posedness of \eqref{fnls1}, we can obtain a polynomial growth bound of the Sobolev norm of the solution $u$ as in \cite{kihoon}. In fact, compared to \cite{kihoon}, our estimate is more optimal and we obtain a better growth bound of $\| u \|_{H^s}$ than that in \cite{kihoon}. See Remark \ref{RMK:sob} for more details.
\end{remark}

\subsection{fNLS on the circle}
\label{SUBSEC:introT}
In this subsection, we consider well-posedness  for the cubic fNLS~\eqref{fnls1} on the circle $\T$ in negative Sobolev spaces.

For $\al > 2$,
it is not difficult to show that the Cauchy problem of cubic fNLS \eqref{fnls1} is globally well-posed in $L^2(\T)$, 
which follows from the Strichartz estimate (see Lemma \ref{LEM:L4str} below) and a contraction argument together with the mass conservation. 
Since the scaling critical Sobolev index is negative,
one can expect that well-posedness for \eqref{fnls1} holds in some negative Sobolev spaces. 
However, 
by adapting the argument in \cite{Mo} to \eqref{fnls1}, we see that the solution map of \eqref{fnls1} fails to be continuous in $H^s (\T)$ for $s < 0$. In fact, in this range of $s$, we can show non-existence of solutions of \eqref{fnls1}; see Corollary \ref{COR:nonexist} below.
For this reason, we follow the idea in \cite{OS} and consider instead the following renormalized cubic fNLS:
\begin{align}
\begin{cases}
i\dt v = D^{\al} v + (|v|^2 - 2 \fint |v|^2) v \\
u|_{t= 0} = u_0,
\end{cases}
\quad 	(t, x)  \in \R \times \T,
\label{fnls2}
\end{align}

\noi
where $\fint f := \frac{1}{2\pi} \int_\T f(x)dx$. If $u_0 \in L^2 (\T)$, the equations \eqref{fnls1} and \eqref{fnls2} are equivalent under the following gauge transform
\begin{align}
\begin{split}
v(t) &= \mathcal{G} [u] (t) := e^{2 i t \fint |u(t)|^2} u(t), \\
u(t) &= \mathcal{G}^{-1} [v] (t) := e^{-2 i t \fint |v(t)|^2} v(t).
\end{split}
\label{gauge}
\end{align}

\noi 
Note that the additional term in \eqref{fnls2}
removes the resonant interactions caused by
 $n_2 = n_1$ or $n_2 = n_3$.\footnote{Such a modification does not seem to have a significant effect on $\R$,
since $\xi_2 = \xi_1$ or $\xi_2 = \xi_3$ is a set of measure zero in the hyperplane $\xi = \xi_1 - \xi_2 + \xi_3$ for a fixed $\xi$.}
More preseisly,
the nonlinearity on the right-hand side of \eqref{fnls2} can be written as
\begin{align*}  
\bigg( |v|^2 - 2 \fint  |v|^2 \bigg) v     =:  \NN_1 (v) - \RR_1 (v),
\end{align*}

\noi
where the non-resonant part $\NN_1 (v) = \NN_1 (v, v, v)$ and the resonant part $\RR_1 (v) = \RR_1 (v, v, v)$ are defined by
\begin{align*}
\NN_1(v_1, v_2, v_3)(t, x)&:=\sum_{n\in \Z} e^{inx} \sum_{\G(n)}\ft{v_1} (t, n_1) \cj{\ft{v_2} (t, n_2)} \ft{v_3} (t, n_3), \\
\RR_1(v_1,v_2,v_3)(t, x)
&:=\sum_{n \in \Z} e^{inx} \ft{v_1} (t, n) \cj{\ft{v_2} (t, n)} \ft{v_3} (t, n).
\end{align*}

\noi
with $\G(n)$ denoting the hyperplane:
\begin{equation}
\G(n) :=\{(n_1, n_2, n_3)\in  \Z^3 : n = n_1 - n_2 + n_3, n_1 \neq n_2, \text{and } n_2 \neq n_3 \}.
\label{Gam0}
\end{equation}

We now focus on the equation \eqref{fnls2} for $v$. By using the same arguments as in \cite{BGT, CCT2}, one can see that the solution map of \eqref{fnls1} fails to be locally uniformly continuous; see also \cite[Lemma 6.16]{OT}. This implies that the contraction argument does not work for proving local well-posedness of \eqref{fnls2}, and so one may construct a solution via a compactness argument. A similar situation happened in the modified KdV equation, where Takaoka-Tsutsumi \cite{TT} introduced modifications on the $X^{s, b}$-norm (see the $Y^{s, b}$-norm in Subsection \ref{SEC:Xsb}) to further weaken the effect of the resonant part $\RR_1 (v)$; see also \cite{NTT, MT}. In this paper, we avoid relying too much on the modified $X^{s, b}$-norm (this is essential in our global well-posedness part explained later) and follow Oh-Tzvetkov-Wang \cite{OTzW} by using a second gauge transform:
\begin{align}
w(t, x) = \J [v](t, x) := \sum_{n\in \Z} e^{inx-it|\ft{u_0} (n)|^2} \ft v (t, n),
\label{gauge2}
\end{align}

\noi
and  $\mathcal J^{-1}[v] $ is defined by replacing the minus sign with a plus sign.
One can easily see that $v$ satisfies \eqref{fnls2} if and only if $w$ satisfies the following equation:
\begin{equation}
\label{fnls3} 
\begin{cases}
i \dt w = D^\al w + \NN_2(w) - \RR_{2}(w) \\
w|_{t= 0} = u_0,
\end{cases} 
\quad (t, x) \in \R \times \T,
\end{equation}

\noi
where the non-resonant nonlinearity $\NN_2(w) = \NN_2 (w, w, w)$ is defined by
\begin{equation}\label{NN1O}
\NN_2(w_1, w_2, w_3) (t, x) : = \sum_{n\in \Z}  e^{inx} \sum_{\G(n)} 
e^{it\Psi(\cj n)} \ft{w_1} (t, n_1) \cj{\ft{w_2}} (t, n_2)  \ft{w_3} (t, n_3)
\end{equation}

\noi
with $\G(n)$ defined in \eqref{Gam0}
and $\Psi(\cj n)$ defined by
\begin{align}
\label{Psi}
\Psi (\cj n) &\hphantom{:}=  \Psi (n_1,n_2,n_3, n) \notag \\
&:= 
|\ft{u_0}(n_1)|^2 - |\ft{u_0}(n_2)|^2 + |\ft{u_0}(n_3)|^2 - |\ft{u_0}(n)|^2,
\end{align}

\noi
and the resonant nonlinearity $\RR_{2}(w) $ is defined by 
\begin{equation}\label{NN2O}
\RR_{2}(w) (t, x) : =  \sum_{n\in \Z}  e^{inx}  \big(|\ft w (t, n) |^2 - |\ft{u_0}(n)|^2\big) \ft w (t, n)\,.
\end{equation}

\noi
We point out that $\RR_{2}(w) $ has a smoothing effect which allows us to obtain a good estimate; see the discussion between Proposition \ref{Prop:triR} and Proposition \ref{Prop:quinc} below.

In the rest of the paper, we choose to study the gauge-transformed cubic fNLS \eqref{fnls3} for~$w$. Our local well-posedness result for \eqref{fnls3} reads as follows.
\begin{theorem}[Local well-posedness on $\T$]
\label{TM:LWP2}
Let $\al>2$ and $\frac{2-\al}{6} \leq s<0$.
Then, the
cubic fNLS \eqref{fnls3}
is locally well-posed in $H^s(\T )$. More precisely, given any $u_0 \in H^s (\T)$, there exists $T = T(\| u_0 \|_{H^s}) > 0$ and a unique solution $w \in C([-T, T]; H^s (\T))$ to \eqref{fnls3} with $u|_{t = 0} = u_0$.
\end{theorem}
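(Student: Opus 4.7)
The approach is to set up Fourier restriction spaces $X^{s,b}_T$ adapted to the fractional dispersion $D^\al$, with $b$ slightly greater than $\frac12$, and run a contraction mapping argument on an equivalent reformulation of \eqref{fnls3} with a cubic-quintic nonlinearity. Even after the two gauge transforms leading to \eqref{fnls3}, a direct trilinear estimate for $\NN_2(w)$ does not close at the endpoint $s = \frac{2-\al}{6}$ because of near-resonant configurations in $\G(n)$, so the new ingredient is to exploit the oscillation $e^{it\Psi(\cj n)}$ together with the Duhamel phase $e^{it\Phi(\cj n)}$, where $\Phi(\cj n):= |n|^\al - |n_1|^\al + |n_2|^\al - |n_3|^\al$, through a partial normal form reduction of the Duhamel formula.

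Concretely, after passing to the interaction representation and writing Duhamel for the $\NN_2$-contribution, I would dyadically split the sum over $\G(n)$ according to the size of $|\Phi(\cj n) + \Psi(\cj n)|$. On the region where $|\Phi+\Psi|$ is large, integration by parts in time produces a boundary cubic term carrying the smoothing factor $(\Phi+\Psi)^{-1}$, together with a quintic correction obtained by substituting $\dt w$ from \eqref{fnls3} (the $\RR_2$-part being benign thanks to its cancellation structure). On the complementary region $|\Phi+\Psi|\les 1$, one keeps the cubic term as is; there the near-resonance is compensated by gains from Strichartz. The resulting fixed-point problem involves the free evolution, a near-resonant cubic term, a boundary cubic term, a quintic term, and the resonant $\RR_2$ term.

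The analytic core is then three multilinear estimates in $X^{s,b}_T$: a trilinear bound for the two cubic pieces, a quintilinear bound for the normal-form quintic, and a smoothing bound for $\RR_2(w)$. Each proceeds by dyadic decomposition in frequencies and modulations, exploiting the key lower bound $|\Phi(\cj n)| \ges \max_j \jb{n_j}^{\al-1} \min_{j\neq k}|n_j-n_k|$ available for $\al>2$ on $\T$, together with the $L^4_{t,x}$ Strichartz estimate (Lemma \ref{LEM:L4str}) and the counting structure of $\G(n)$. For the $\RR_2$-smoothing I would use the telescoping identity $|\ft w(t,n)|^2 - |\ft{u_0}(n)|^2 = 2\Re \int_0^t \ft w(t',n)\, \cj{\dt \ft w(t',n)}\, dt'$, which trades one factor of $w$ for a full time derivative and yields substantial regularization in $n$.

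A standard contraction in a ball of $X^{s,\frac12+\eps}_T$ then gives existence for some $T=T(\|u_0\|_{H^s})>0$. For uniqueness at the level of $C([-T,T];H^s(\T))$, I would verify that any distributional solution of \eqref{fnls3} lying in $X^{s,\frac12+\eps}_T$ satisfies the cubic-quintic reformulation, justifying the normal form reduction for rough solutions by approximating $w$ by smooth solutions and passing to the limit through the already established multilinear bounds; this is the delicate point emphasized in the abstract. I expect the main obstacle to be closing the trilinear estimate at the sharp threshold $s=\frac{2-\al}{6}$, where the worst high-high-high-to-low configurations in $\G(n)$ only barely fit and require tight dyadic book-keeping without any logarithmic slack.
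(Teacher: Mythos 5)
Your plan rests on a contraction mapping for a cubic--quintic reformulation, and this is where it breaks down. The obstruction is the resonant term $\RR_{2}(w)=(|\ft w(t,n)|^{2}-|\ft{u_0}(n)|^{2})\ft w(t,n)$: for an arbitrary element of a ball in $X^{s,\frac12+\eps}_T$ with $s<0$ one only has $\sup_t\big||\ft w(t,n)|^{2}-|\ft{u_0}(n)|^{2}\big|\les \jb{n}^{-2s}\|w\|_{X^{s,\frac12+\eps}}^{2}$, which diverges in $n$, so $\RR_2$ is not a bounded trilinear operator on the iteration space. Your telescoping identity trades a factor of $w$ for $\dt\ft w$, but $\dt\ft w$ is only controlled by substituting the equation itself, and (as in the paper's Proposition \ref{Prop:quinc}) the resulting quintic term requires a \emph{further} integration by parts that again substitutes the equation. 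Hence a single partial normal form does not produce a closed fixed-point problem in explicit multilinear operators of the unknown; one would have to iterate the reduction indefinitely (the Oh--Wang route) or, as the paper does, abandon contraction: existence is obtained by a compactness argument (smooth approximating data, a uniform a priori bound in $X^{s,\frac12+\eps}_T$ via Propositions \ref{Prop:triR} and \ref{Prop:quinc}, then Banach--Alaoglu/Arzel\`a--Ascoli and passage to the limit in the equation), and uniqueness by a separate difference estimate. Relatedly, you place the normal form on the wrong term: the direct trilinear estimate for $\NN_2$ (Proposition \ref{Prop:triR}) already closes for $s>\max\{\frac{2-\al}{4},\frac{1-\al}{6}\}$, which strictly contains the endpoint $s=\frac{2-\al}{6}$; no modulation splitting of $\NN_2$ is needed. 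The integration by parts in time is needed only for $\RR_2$.

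A second gap is your justification of the normal form for rough solutions ``by approximating $w$ by smooth solutions.'' For a given rough solution of \eqref{fnls3} in $X^{s,\frac12+\eps}_T$ no such approximating sequence is available a priori; assuming one exists downgrades the conclusion to conditional uniqueness (uniqueness only among limits of smooth solutions, as in Oh--Wang). The paper instead verifies Fubini's theorem and the product rule/integration by parts directly for functions whose temporal Fourier restrictions lie in $H^{\frac12+\eps}_t$ (Remark \ref{RMK:fub_ibp}), which is what makes the uniqueness unconditional in $X^{s,\frac12+\eps}_T$. Finally, a minor point: the correct resonance lower bound is $\big||\xi_1|^{\al}-|\xi_2|^{\al}+|\xi_3|^{\al}-|\xi_4|^{\al}\big|\ges|\xi_1+\xi_2||\xi_2+\xi_3||\xi_{\max}|^{\al-2}$ (Lemma \ref{LEM:factor}), i.e.\ a product of the two resonance factors, not a single minimal frequency gap times $\jb{n_j}^{\al-1}$.
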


Since the second gauge transform $\mathcal{J}$ is well defined in $H^s (\T)$ for any $s \in \R$, our local well-posedness result in Theorem \ref{TM:LWP2} can be carried over to the renormalized cubic fNLS \eqref{fnls2} for $v$. As in the non-periodic setting, our proof of Theorem \ref{TM:LWP1} is based on the Fourier restriction norm method, and so the uniqueness statement in Theorem \ref{TM:LWP2} holds only in (the local-in-time version of) the relevant $X^{s, b}$-space; see Subsection \ref{SUBSEC:spaces} below. Our local well-posedness result is able to cover the endpoint case in Miyaji-Tsutsumi \cite{MT}; see Remark \ref{RMK:3NLS} below.

As mentioned above, we prove Theorem \ref{TM:LWP2} by using a compactness argument. Namely, we construct the solution to \eqref{fnls3} by using smooth approximation of solutions and establishing an energy estimate. 
In particular, the proof of the existence part of local well-posedness involves the use of continuity of the local-in-time version of the $X^{s, b}$-space with respect to the time restriction. We point out that this continuity property has been used in many works but has not been proved anywhere. In this paper, we provide a proof of this continuity property; see Lemma \ref{LEM:Xsb_cty} below.

For the uniqueness part of local well-posedness, our main task is to justify the normal form reduction step (integration by parts in time) for rough solutions. We point out that in the framework of Takaoka-Tsutsumi \cite{TT} (also \cite{NTT, MT}), this justification for the uniqueness part does not seem to be written in an explicit manner in any existing literature. Also, in a recent work \cite{kwak} on 4NLS (i.e.~\eqref{fnls3} with $\al = 4$) using the same framework, the author claimed uniqueness, but the proof is not correct; see \cite[Proposition 3.8]{kwak}, where the author worked with the difference between two smooth solutions instead of rough solutions. In this paper, we give for the first time a rigorous justification for this normal form reduction step for rough solutions; see Remark \ref{RMK:fub_ibp} for more details.



\medskip
As a corollary to the proof of our local well-posedness for the renormalized cubic fNLS~\eqref{fnls2} in Theorem \ref{TM:LWP2}, we obtain the following nonexistence result for the original cubic fNLS~\eqref{fnls1} mentioned above.
\begin{corollary}
\label{COR:nonexist}
Let $\al > 2$, $\frac{2 - \al}{4} \leq s < 0$, and $u_0 \in H^s (\T) \setminus L^2 (\T)$. Then, for any $T > 0$, there is no distributional solution $u \in C([-T, T]; H^s (\T))$ to the cubic fNLS \eqref{fnls1} on $\T$ such that

\begin{enumerate}
\item[(i)] $u|_{t = 0} = u_0$;

\smallskip
\item[(ii)] There exist smooth global solutions $\{ u_n \}_{n \in \N}$ to \eqref{fnls1} such that $u_n \to u$ in $C([-T, T]; \mathcal{D}' (\T))$ as $n \to \infty$.
\end{enumerate}
\end{corollary}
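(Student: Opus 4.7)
Suppose for contradiction that such $u$ and $\{u_n\}_{n\in\N}$ exist with $u_0\in H^s(\T)\setminus L^2(\T)$. The strategy is to apply both gauge transforms to the smooth approximations and extract a contradiction from the rapid oscillation of the phase $e^{-itm_n^2/\pi}$ introduced by the first gauge. Since each $u_n$ is a smooth solution of \eqref{fnls1}, mass is conserved, so $m_n:=\|u_n(t)\|_{L^2(\T)}$ is independent of $t$. Distributional convergence at $t=0$ gives $\ft{u_n}(0,k)\to\ft{u_0}(k)$ for every $k\in\Z$, and Fatou's lemma yields $\|u_0\|_{L^2}^2\le\liminf_n m_n^2$, whence $m_n\to\infty$ as $u_0\notin L^2$. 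Now set $v_n:=\mathcal G[u_n]=e^{itm_n^2/\pi}u_n$ and $w_n:=\J_n[v_n]$, where $\J_n$ is the second gauge defined as in \eqref{gauge2} but with $\ft{u_n}(0,\cdot)$ in place of $\ft{u_0}(\cdot)$. Then $w_n$ is a smooth global solution of the variant of \eqref{fnls3} in which $\Psi$ is replaced by the phase $\Psi_n$ built from $\ft{u_n}(0,\cdot)$, and on the Fourier side
\[
\ft{u_n}(t,k)\;=\;e^{it|\ft{u_n}(0,k)|^2-itm_n^2/\pi}\,\ft{w_n}(t,k).
\]

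Denote by $w\in C([-T^*,T^*];H^s(\T))$ the unique solution of \eqref{fnls3} with $w|_{t=0}=u_0$ furnished by Theorem~\ref{TM:LWP2} on some short interval $[-T^*,T^*]$. The main technical step---which I expect to be the principal obstacle---is to establish mode-by-mode convergence: for each fixed $k\in\Z$,
\[
\ft{w_n}(\cdot,k)\longrightarrow\ft{w}(\cdot,k)\qquad\text{uniformly on }[-T^*,T^*].
\]
I would obtain this by revisiting the smooth-approximation and compactness scheme underlying the existence half of the proof of Theorem~\ref{TM:LWP2}: the pointwise convergences $\ft{u_n}(0,k)\to\ft{u_0}(k)$ on $\Z$ and $\Psi_n\to\Psi$ on $\Z^4$, together with the multilinear and energy estimates already developed for \eqref{fnls3}, should deliver fiberwise convergence at each Fourier mode, bypassing the need for $H^s$-convergence of the initial data (which we do not have, since $u_n(0)\to u_0$ only in $\mathcal D'(\T)$).

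With this convergence in hand, choose $k_0\in\Z$ with $\ft{u_0}(k_0)\neq 0$ (possible as $u_0\neq 0$). By continuity, $\ft{w}(\cdot,k_0)$ does not vanish on some open interval $I\ni 0$. Combining $u_n\to u$ in $C([-T,T];\mathcal D'(\T))$ (which gives $\ft{u_n}(\cdot,k_0)\to\ft{u}(\cdot,k_0)$ uniformly) with the mode-by-mode convergence above, on $I$ we have
\[
e^{-itm_n^2/\pi}\;=\;\frac{\ft{u_n}(t,k_0)}{e^{it|\ft{u_n}(0,k_0)|^2}\,\ft{w_n}(t,k_0)}\;\longrightarrow\;h(t)\;:=\;\frac{\ft{u}(t,k_0)}{e^{it|\ft{u_0}(k_0)|^2}\,\ft{w}(t,k_0)}
\]
uniformly in $t\in I$. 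The limit $h$ is continuous on $I$ with $|h|\equiv 1$ (as $|\ft{w_n}|=|\ft{v_n}|=|\ft{u_n}|$ passes to $|\ft{w}|=|\ft{u}|$ in the limit). But Riemann--Lebesgue asserts $\int_I e^{-itm_n^2/\pi}\phi(t)\,dt\to 0$ for every $\phi\in L^1(I)$, and the uniform convergence to $h$ forces $\int_I h(t)\phi(t)\,dt=0$ for every such $\phi$. Hence $h\equiv 0$ almost everywhere on $I$, contradicting $|h|\equiv 1$.
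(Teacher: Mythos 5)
Your overall strategy is the one the paper intends: it is the Guo--Oh mechanism, in which the first gauge transform introduces the divergent phase $e^{-itm_n^2/\pi}$ (with $m_n\to\infty$ forced by Fatou's lemma and $u_0\notin L^2$), and a Riemann--Lebesgue argument then shows that $\lim_n \ft{u_n}(t,k_0)$ must vanish for a.e.\ small $t$, contradicting the nonvanishing of $\ft{u}(\cdot,k_0)$ near $t=0$. The Fatou step, the bookkeeping of the two gauges, and the final contradiction are all fine \emph{conditional on} the convergence you yourself flag as the main technical step.

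That step is a genuine gap, and the route you propose for closing it does not work. The compactness scheme of Subsection \ref{SEC:LWP2} is built entirely on the hypothesis that the approximating data converge in $H^s(\T)$: the uniform bound $\|u_{0,j}\|_{H^s}\le K$ is what produces a common existence time $T=T(K)$ and the uniform bound \eqref{wj_bdd}, without which Propositions \ref{Prop:triR} and \ref{Prop:quinc} give no control whatsoever on the $w_n$. In your setting $u_n(0)\to u_0$ only in $\mathcal{D}'(\T)$; Banach--Steinhaus yields a uniform bound in some $H^{-N}$ but not in $H^s$, and the data are genuinely wild ($\|u_n(0)\|_{L^2}\to\infty$). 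Hence you have neither a common interval $[-T^*,T^*]$ on which the $w_n$ exist with uniform $X^{s,\frac12+\eps}$ bounds, nor any multilinear estimate with which to pass to the limit, so the claim $\ft{w_n}(\cdot,k)\to\ft{w}(\cdot,k)$ is unsupported (and identifying any limit with the solution from Theorem \ref{TM:LWP2} would additionally require uniqueness in $X^{s,\frac12+\eps}_{T^*}$). This is precisely where the a priori estimate the paper emphasizes (Proposition \ref{Prop:quinc}/Corollary \ref{Cor:qtc}, transported back to $v$ by $\mathcal{J}^{-1}$) and the detailed argument of \cite[Section 9]{GO} must do real work: one needs a quantitative statement of the form $\ft{v_n}(t,k)=\ft{v_n}(0,k)+R_{n,k}(t)$ with $R_{n,k}(t)$ small for small $t$ \emph{uniformly in $n$}, and extracting that uniformity from the weak hypotheses of the corollary is the actual content of the proof, not a rerun of the existence argument.
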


To prove Corollary \ref{COR:nonexist}, once we have an a priori estimate for solutions to the renormalized cubic fNLS \eqref{fnls2}, we can follow the idea by Guo-Oh \cite{GO} to exploit the bad behavior of the gauge transform $\mathcal{G}$ below $L^2 (\T)$; see \cite[Section 9]{GO} for more details. In the proof of local well-posedness in Theorem \ref{TM:LWP2}, we show an a priori estimate for the solution $w$ to the equation~\eqref{fnls3}. Since the second gauge transform $\mathcal{J}$ preserves the $H^s$-norm, we can obtain an a priori estimate for the solution $v$ to the renormalized cubic fNLS \eqref{fnls2}. This allows us to achieve the nonexistence result in Corollary \ref{COR:nonexist}.

\begin{remark} \rm
In \cite{CP}, Choffrut-Pocovnicu showed a norm inflation result for the cubic fNLS~\eqref{fnls1} in $H^s (\T)$ for $s < \frac{1 - 2\al}{6}$. As mentioned in \cite{CP}, their norm inflation result can be carried over to the renormalized cubic fNLS \eqref{fnls2}, since the gauge transform $\mathcal{G}$ preserves the $H^s$-norm. 
As a result, there is a gap $\frac{1 - 2\al}{6} \leq s < \frac{2 - \al}{6}$ between local well-posedness and ill-posedness result for the renormalized cubic fNLS \eqref{fnls2}. 
\end{remark}

Lastly, we would like to extend Theorem \ref{TM:LWP2} globally-in-time. Let us state our global well-posedness result for the cubic fNLS \eqref{fnls3} in the following.

\begin{theorem}[Global well-posedness on $\T$]
\label{TM:GWP2}
Let $\al>2$ and $\frac{2-\al}{6} \leq s<0$.
 Then, the
cubic fNLS \eqref{fnls3}
 is globally well-posed in $H^s(\T )$. More precisely, given any $u_0 \in H^s (\T)$ and any $T > 0$, there exists a unique solution $u \in C([-T, T]; H^s (\T))$ to \eqref{fnls3} with $u|_{t = 0} = u_0$.
\end{theorem}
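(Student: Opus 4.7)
The plan is to extend Theorem \ref{TM:LWP2} globally-in-time by the $I$-method of Colliander-Keel-Staffilani-Takaoka-Tao, mirroring the strategy used to upgrade Theorem \ref{TM:LWP1} to Theorem \ref{TM:GWP1} on the line (and in the spirit of \cite{kihoon}). The obstruction to direct iteration of Theorem \ref{TM:LWP2} is that smooth solutions $w$ of \eqref{fnls3} only conserve $\int_{\T}|w|^2\,dx$, which is infinite for data in $H^s(\T)$ with $s<0$. I would therefore introduce the $I$-operator $I=I_N$ defined in \eqref{Iop1}, a Fourier multiplier acting as the identity on $\{|n|\le N\}$ and as a smoothing on $\{|n|\gg N\}$, so that $I:H^s(\T)\to L^2(\T)$ with
\begin{align*}
\|w\|_{H^s(\T)}\lesssim\|Iw\|_{L^2(\T)}\lesssim N^{-s}\|w\|_{H^s(\T)}.
\end{align*}
The modified mass $M(Iw)=\|Iw\|_{L^2}^2$ is then finite and provides an a priori control of $\|w\|_{H^s}^2$; the task is to control its growth in time.

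I would proceed in three steps. First, I would prove a \emph{modified local well-posedness}: given $w|_{t=0}=u_0\in H^s(\T)$, the solution of \eqref{fnls3} exists on an interval $[0,T_{\mathrm{loc}}]$ of length depending only on $\|Iu_0\|_{L^2}$. This is obtained by re-running the trilinear (and, after partial Duhamel iteration, quintilinear) $X^{s,b}$-estimates underlying Theorem \ref{TM:LWP2} with the $I$-multiplier inserted on the solution, using commutator bounds for $I$ to trade the $H^s$-norm for the $L^2$-norm of $Iw$. Second, I would establish an \emph{almost conservation law}: on any such interval,
\begin{align*}
\bigl|M(Iw)(t)-M(Iw)(0)\bigr|\le C\,N^{-\beta}\,\mathcal P\bigl(\|Iw(0)\|_{L^2}\bigr),\qquad t\in[0,T_{\mathrm{loc}}],
\end{align*}
for some $\beta=\beta(\alpha,s)>0$ and some polynomial $\mathcal P$. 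To obtain this one computes $\frac{d}{dt}M(Iw)$ and expresses it as a commutator-type multilinear expression pairing $I(\NN_2(w)-\RR_2(w))-(\NN_2-\RR_2)(Iw,\dots)$ against $\overline{Iw}$; a further normal-form reduction (integration by parts in time, justified as in the uniqueness argument for Theorem \ref{TM:LWP2}) converts part of the resulting expression into quintilinear correction terms whose multilinear estimates exhibit an explicit negative power of $N$. Third, I would iterate: on each successive subinterval of length $T_{\mathrm{loc}}$ the modified mass $M(Iw)$ increases by at most $CN^{-\beta}\mathcal P(\|Iw\|_{L^2})$, so choosing $N=N(T,\|u_0\|_{H^s})$ sufficiently large guarantees that the total growth on $[0,T]$ remains comparable to $M(Iu_0)$ after $\sim T/T_{\mathrm{loc}}$ iterations. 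Converting $M(Iw)$ back via the embedding above yields an a priori bound on $\|w(t)\|_{H^s}$ for $t\in[0,T]$; combined with Theorem \ref{TM:LWP2} this produces a unique solution in $C([-T,T];H^s(\T))$ for every $T>0$ (negative times handled symmetrically), and uniqueness is inherited directly from the local statement.

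The main obstacle is the almost conservation law at the endpoint $s=\frac{2-\alpha}{6}$. Unlike the line case, where a scaling transform can reduce the data to unit size, the fixed period of $\T$ forces one to work directly with $M(Iw)$, and the oscillating phase $e^{it\Psi(n_1,n_2,n_3,n)}$ produced by the second gauge \eqref{gauge2} must be exploited through a second layer of normal-form reduction to extract the decisive $N^{-\beta}$ gain from the modified-energy increment. Moreover, since $\Psi$ depends on $\widehat{u_0}(n)$, at each iteration the gauge transform $\mathcal J$ is referenced to the current restart data; because $\mathcal J$ commutes with $I$ and preserves $|\widehat{w}(t,n)|$ frequency-by-frequency, the bookkeeping on $\|Iw\|_{L^2}$ passes safely between restarts, but this compatibility has to be checked carefully.
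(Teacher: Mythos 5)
Your overall architecture (define $I=I_N$, control the modified mass $M(Iw)=\|Iw\|_{L^2}^2$, prove an almost conservation law with a negative power of $N$, iterate a modified local theory) is the same skeleton the paper uses. But you have correctly identified "the absence of scaling on $\T$" as the main obstacle and then not resolved it, and this is exactly where the argument breaks. The paper's substitute for the rescaling step is the parameter-dependent norm $H^s_M$ in \eqref{SobM} together with the \emph{modified} multiplier $m_M$ in \eqref{iop2}: choosing $M\sim N^2$ makes $\|I_M u_0\|_{L^2}\les N^{-2s}\|u_0\|_{H^{2s}_M}$ arbitrarily small (by dominated convergence in $M$, since $s<0$), so that Proposition \ref{PROP:ILWP2} yields a local interval of \emph{unit} length with the uniform bound $\|I_M w\|_{X^{0,\frac12+\eps}}\le 2\eta$. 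With your unmodified $I_N$ from \eqref{Iop1}, all you can say is $\|I_N u_0\|_{L^2}\les N^{-s}\|u_0\|_{H^s}$, which grows with $N$. This is fatal in two places. First, the local theory on $\T$ is not a contraction: it is a compactness/continuity argument whose a priori bound involves the degree-seven resonant contribution $\big(\|u_0\|_{H^s}^4+\|w\|^4+\|w\|^6\big)\|w\|$ (Corollary \ref{Cor:qtc}), so one genuinely needs small $\|Iu_0\|_{L^2}$ to close the continuity argument on a time interval of fixed length; the paper emphasizes that combining the $I$-method with a non-contraction local theory is precisely the delicate point. Second, even granting some local time $T_{\mathrm{loc}}(\|Iu_0\|_{L^2})$, the endpoint bookkeeping fails: at $s=\frac{2-\al}{6}$ the almost conservation law (Proposition \ref{PROP:aclaw2}) gives an increment $\les N^{-2+\dl}\|I w\|_{X^{0,\frac12+\eps}}^6$ per step, and if $\|Iw\|_{X^{0,\frac12+\eps}}\sim N^{-s}$ rather than $O(1)$, the sixth power contributes $N^{-6s}=N^{\al-2}$, making the per-step increment $N^{\al-4+\dl}$ — unbounded already for $\al>4$, so the total growth cannot be kept comparable to $M(Iu_0)$ no matter how $N$ is chosen.

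Two further corrections. Your claim that "at each iteration the gauge transform $\mathcal J$ is referenced to the current restart data" is not how the paper proceeds and would create a different equation at each step: the gauge is fixed once, the phase $\Psi$ in \eqref{NN1O} always refers to $\ft{u_0}$, and the iteration is run on the single equation \eqref{fnls3}. Also, the almost conservation law on $\T$ is not obtained by a "second layer of normal-form reduction" exploiting $e^{it\Psi}$ (the phase $\Psi$ is unimodular and is simply carried along); the decisive gain comes from the lower bound of Lemma \ref{LEM:factor} on the resonance function combined with the $L^4$-Strichartz estimate and the $L^6$-Strichartz estimate from $\l^2$-decoupling (Lemma \ref{LEM:L6str}) — which is also why the paper insists on working with the genuine $X^{s,b}$-norm rather than the Takaoka--Tsutsumi modification.
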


As before, the uniqueness statement in Theorem \ref{TM:GWP2} holds in the sense that for any $t_0 \in \R$, there exists a time interval $I(t_0) \ni t_0$ such that the solution $u$ to \eqref{fnls3} is unique in the relevant $X^{s, b}$-space restricted to the time interval $I(t_0)$; see Subsection \ref{SUBSEC:spaces} below. Also, we are able to cover global well-posedness of \eqref{fnls3} for the full range of $s < 0$ where local well-posedness holds. Our global well-posedness result covers the missing endpoint case in Oh-Wang \cite{OW} in some suitable sense; see Remark \ref{RMK:uni} below.

To prove Theorem \ref{TM:GWP2}, we follow an analogous argument as in the non-periodic setting.
While there is no dilation symmetry
in the periodic setting, one can still use the $I$-method as mentioned in Subsection \ref{SUBSEC:fNLS_R}.
Indeed, the authors in \cite{SPST} successfully applied the $I$-method to the period setting by doing analysis on a dilated torus $\T^d_{\ld}=\R^d/\ld\Z^d$. 
In this paper, instead of applying $I$-method on the dilated circle,
we use the following $H_M^s$-norm adapted to the parameter $M \geq 1$ defined by
\[ \| f \|_{H^s_M} = \big\|(M^2 + n^2)^\frac{s}{2} \ft f(n)\big\|_{\l^2_n}.\]

\noi
While the $H_M^s$-norm is equivalent to the standard $H^s$-norm, 
we have the following decay property when $s < 0$:
\[
\lim_{M\to \infty} \|f\|_{H^s_M}=0
\]

\noi
for any $f \in H^s(\T)$. 
This allows us to reduce the problem to a small data setting in some appropriate sense. The idea of using the $H^s_M$-norm comes from \cite{CHT, OW}; see also \cite{CFLOP, KLV}.
Due to the change of the modulation function in the $H^s_M$-norm, we adjust the definition of $I$-operator to this $H^s_M$-norm; see \eqref{iop2}.

By using the $H^s_M$-norm and the modified $I$-operator, we establish an a priori control on the growth of $\|u\|_{H^s(\T)}$, which then allows us to iterate the local well-posedness argument to achieve global well-posedness; see Subsection \ref{SEC:AClaw2} for details. In the proof, we need to invoke the $L^6$-Strichartz estimate established by the $\l^2$ decoupling theorem by Bourgain-Demeter \cite{BD}; see Lemma \ref{LEM:L6str} below. Thus, it is important that we work with the original $X^{s, b}$-norm instead of the modified $X^{s, b}$-norm as in \cite{TT, NTT, MT}. We also point out that in this paper, we use the $I$-method combined with a non-contraction argument, which is a different situation than the usual application of the $I$-method in existing literature.

\begin{remark} \rm
\label{RMK:3NLS}
In \cite{MT1, MT}, Miyaji-Tsutsumi studied the following NLS with a third order dispersion:
\begin{align}
\dt u - \partial_x^3 u + i \g \partial_x^2 u + i |u|^2 u = 0,
\label{3NLS}
\end{align}

\noi
where $\g \in \R$ is a constant with $2\g / 3 \notin \Z$. Specifically, the authors showed that \eqref{3NLS} is locally well-posed in $H^s (\T)$ for $s > -\frac 16$ and globally well-posed in $L^2 (\T)$. We remark that the same proof in this paper for our equation \eqref{fnls2} with $\al = 3$ applies to the equation \eqref{3NLS}, and we are able to cover global well-posedness of \eqref{3NLS} in $H^s (\T)$ for the range $s \geq - \frac 16$ including the endpoint $-\frac 16$.
\end{remark}

\begin{remark} \rm
\label{RMK:uni}
In \cite{OW}, in the case of the renormalized 4NLS (i.e.~\eqref{fnls2} with $\al = 4$), Oh-Wang showed an enhanced uniqueness result for the global solution in the following sense: for $-\frac 13 < s < 0$, the solution to 4NLS is unique among all solutions in $C(\R; H^s (\T))$ with the same initial data equipped with smooth approximating solutions. This was proved by using an infinite iteration of normal form reductions, which was justified in a different manner than that in this paper. 

In this paper, although our uniqueness holds only in (a local-in-time version of) the relevant $X^{s, b}$-space, we do not need the assumption that the solution admits a sequence of smooth approximating solutions. Moreover, our uniqueness covers the endpoint case $s= - \frac 13$.
We also remark that one may use an infinite iteration of normal form reductions to prove an enhanced uniqueness result as in \cite{OW} for the cubic fNLS \eqref{fnls2} for some range of $s < 0$ depending on the value of $\al$.
\end{remark}

We conclude the introduction by stating several remarks on some other related aspects of the cubic fNLS \eqref{fnls1}.
\begin{remark} \rm
It is possible to improve our global well-posedness result for the cubic fNLS~\eqref{fnls1} by using the short time Fourier restriction norm method. Indeed, in \cite{OW}, Oh-Wang used the short time Fourier restriction norm method to show a global existence result for the 4NLS (i.e.~\eqref{fnls2} with $\al = 4$) in $H^s (\T)$ for $s > -\frac{9}{20}$, which covers a larger range of~$s$ than that in our Theorem \ref{TM:GWP2}. However, uniqueness might be an issue in this approach.
\end{remark}

\begin{remark} \rm
It would be of interest to study the cubic fNLS \eqref{fnls1} in Fourier-Lebesgue spaces $\mathcal{FL}^{s, p} (\M)$ given by the norm
\begin{align*}
\| f \|_{\mathcal{FL}^{s, p} (\M)} := \big\| \jb{\xi}^s \ft{f} (\xi) \big\|_{L_\xi^p (\ft{\M})},
\end{align*} 

\noi
where $s \in \R$, $1 \leq p \leq \infty$, and $\M = \R$ or $\T$. Using a similar scaling analysis as in Subsection~\ref{SUB:intro}, we see that (a homogeneous version of) the Fourier-Lebesgue space $\mathcal{FL}^{s, p} (\R)$ is invariant under the scaling symmetry \eqref{scaling} when $s = s_{\text{crit}} (p) = 1 - \frac 1p - \frac{\al}{2}$.

The Fourier-Lebesgue spaces have been used mainly in the study of NLS (i.e.~$\al = 2$); see \cite{Grun, Chr, GH, OW21, FO}. It may be possible to use Fourier-Lebesgue spaces to cover local well-posedness for the cubic fNLS \eqref{fnls1} on $\R$ and $\T$ in the full subcritical regime. In this paper, however, we do not pursue this issue.
\end{remark}

\begin{remark} \rm
\label{RMK:rand}
One can also study almost sure well-posedness of the cubic fNLS \eqref{fnls1} using random initial data. Since Bourgain's seminal work \cite{BO96} on almost sure local well-posedness of the cubic NLS on $\T^2$, there has been a significant development in this direction for NLS on $\T^d$ with random initial data of the form
\begin{align}
u_0^\omega = \sum_{n \in \Z^d} e^{in \cdot x} \frac{g_n (\omega)}{\jb{n}^\gamma},
\label{rand_init}
\end{align}

\noi
where $\g \in \R$ and $\{g_n\}_{n \in \Z^d}$ are i.i.d standard complex Gaussian random variables; see \cite{BO97, CO, DNY2, FOSW, DNY3, Liu}. By considering random initial data \eqref{rand_init} for the cubic fNLS \eqref{fnls1} on the circle $\T$, one may be able to improve our global well-posedness result in Theorem \ref{TM:GWP2} and even go beyond the deterministic well-posedness threshold for \eqref{fnls1}.

An interesting case for the cubic fNLS \eqref{fnls1} is the random initial data \eqref{rand_init} with $\g = \frac{\al}{2}$. This initial data induce a Gaussian measure that can be used to construct the invariant Gibbs measure associated to \eqref{fnls1} and prove almost sure global well-posedness of \eqref{fnls1}; see \cite{SunTz, LW1, LW2, LLW}.

Another interesting case is the white noise initial data (i.e.~\eqref{rand_init} with $\g = 0$), which lies almost surely in $H^{-\frac 12 - \eps} (\T) \setminus H^{-\frac 12} (\T)$ for any $\eps > 0$; see \cite[Lemma B.1]{BTz08}. This white noise initial data induce the white noise measure, which is formally written as
\begin{align}
\text{``}Z^{-1} e^{-\frac 12 \| u \|_{L^2 (\T)}^2} du\text{''}.
\label{white}
\end{align} 

\noi
In view of the mass conservation of the cubic fNLS \eqref{fnls1}, the white noise measure \eqref{white} is expected to be invariant under the dynamics of \eqref{fnls1}. Indeed, in \cite{OTzW}, Oh-Tzvetkov-Wang showed almost sure global well-posedness and invariance of the white noise measure in the case of 4NLS (i.e.~\eqref{fnls1} with $\al = 4$). In a general setting, as a corollary of our global well-posedness result in Theorem \ref{TM:GWP2}, invariance of the white noise under the dynamics of the cubic fNLS~\eqref{fnls1} (more precisely, the renormalized cubic fNLS \eqref{fnls2}) follows for $\al > 5$. When $\al \leq 5$, our local well-posedness argument does not cover well-posedness in the support of the white noise and thus it would be of interest to study probabilistic well-posedness for $2 < \al \leq 5$ and $\al \neq 4$. In the case of NLS (i.e.~\eqref{fnls1} with $\al = 2$), this problem is critical; see \cite{FOW, DNY2}.

Although random initial data of the form \eqref{rand_init} are restricted to periodic domains, one can also consider probabilistic well-posedness of the cubic fNLS \eqref{fnls1} on Euclidean domains via the Wiener randomization of the initial data; see, for example, \cite{BOP1, BOP2}.
\end{remark}

\subsection{Organization of the paper} 
This paper is organized as follows. In Section \ref{SEC:2}, we go over some notations and useful function spaces.
In Section \ref{SEC:WPreal}, after establishing some Strichartz estimates, we prove local well-posedness (Theorem \ref{TM:LWP1}) and then global well-posedness (Theorem~\ref{TM:GWP1}) of the cubic fNLS \eqref{fnls1} on $\R$.
In Section \ref{SEC:WPT}, after showing some Strichartz estimates and multilinear estimates, we prove local well-posedness (Theorem \ref{TM:LWP2}) and then global well-posedness (Theorem \ref{TM:GWP2}) of the cubic fNLS \eqref{fnls3} on $\T$.

\section{Notations, function spaces, and preliminary estimates}
\label{SEC:2}

In this section, we discuss notations, function spaces, and some relevant preliminary estimates.

\subsection{Notations} 
\label{SUBSEC:notations}
For $a, b > 0$, we use $a\les b$ to denote that
there exists $C>0$ such that $a \leq Cb$.
By $a\sim b$, we mean that $a\les b$ and $b \les a$. 
We also use $a+$ (and $a-$) to denote $a + \eps$ (and $a - \eps$, respectively) for arbitrarily small $0 < \eps \ll 1$.

We use $\eta \in C_c^\infty(\R)$ to denote a smooth cutoff function supported on $[-2,2]$ with $\eta \equiv 1$ on $[-1, 1]$ and let $\eta_{T} (t) = \eta(t / T)$ for any $T > 0$.

Throughout this paper, we denote $\mathcal{M}$ by either $\R$ or $\T$. Given a function $u$ on $\M\times \R$, we
use $\ft{u}$ and $\F (u)$
to denote the space-time Fourier transform of $u$ given by
\[ \ft{u}(\tau, \xi) = \int_{\R \times \M} e^{- i t \tau - i x \xi} u(t, x) dt dx.\]

\noi
When there is no confusion,
we may simply use $\ft{u}$ or $\F(u)$
to denote
the spatial, temporal, or space-time Fourier transform
of $u$, depending on the context. We also denote $\F^{-1}$ as the inverse Fourier transform.
Note that the derivative $\dx$ corresponds to the multiplication by $i\xi$ on the Fourier side.

For a dyadic number $N\geq 1$, we define the dyadic interval $I_N$ by
\begin{align*}
I_1 =[-1,1] 
\qquad
\text{and}
\qquad
 I_N =\left[-N,- \tfrac N2\right) \cup \left(\tfrac N2,N\right],
\end{align*}

\noi
$N \geq 2$.
 Then,  
we use $\pi_N$ to
denote the projection operator defined by
$\ft{\pi_N f}(\xi) = \ind_{I_N}(\xi) \ft{f}(\xi)$.
We also set 
\[ \pi_{\leq N } = \sum_{\substack{1 \leq L \leq N \\ L \text{ dyadic}}} \pi_L
\qquad \text{and}\qquad \pi_{> N } = \sum_{\substack{L > N \\ L \text{ dyadic}}} \pi_L.\]

In the following, we use $S(t)= e^{-i t D^\al}$
to denote the solution operator to
the linear fractional Schr\"odinger equation: $i \dt u = D^\al u $.
Namely, we have
\begin{align*}
S(t) f = \int_{\ft{ \M}} e^{- i t |\xi|^\al + i x \xi} \ft{f}(\xi) d\xi.
\end{align*}

We also recall some notations from \cite{ckstt2}, which will be useful for the proof of our global well-posedness results.
If $d\geq 2$ is an even integer, we denote by $M_d = M_d(\xi_1, \dots, \xi_d)$ a general (spatial) multiplier of
order $d$ on the
hyperplane
\[ \G_d := \{ (\xi_1, \dots, \xi_d) \in \M: \xi_1 + \cdots +
\xi_d = 0 \},\]

\noi
which we endow with the Dirac delta measure $\delta(\xi_1 + \cdots +
\xi_d)$. If $M_d$ is a multiplier of order $d$ and $f_1, \ldots, f_d$ are
functions on $\R$ or $\T$, we define $\Ld_d(M_d;f_1, \dots,
f_d)$ by
\[ 
\Ld_d(M_d;f_1, \dots, f_d)
:= \int_{\G_d} M_d(\xi_1, \dots, \xi_d) \prod_{j=1}^d \ft{f_j} (\xi_j) d\xi_1 \cdots d\xi_d.
\]

\noi
We also use the notation
\[ \Lambda_d(M_d;f) := \Lambda_d(M_d; f, \cj{f}, \dots,
f, \cj{f}).\]

\noi
Observe that $\Lambda_d(M_d;f)$ is invariant under permutations of
the odd (even) indices.

If $M_d$ is a multiplier of order $d$, $j$ is an index with $1 \leq j \leq d$,
and $k \geq 1$ is an even integer, we define the elongation
$\mathbf X^k_j(M_d)$ of $M_d$ to be the multiplier of order $d+k$ given by
$$ \mathbf X^k_j(M_d)(\xi_1, \dots, \xi_{d+k})
:= M_d(\xi_1, \dots, \xi_{j-1}, \xi_j + \cdots + \xi_{j+k},
\xi_{j+k+1}, \dots, \xi_{d+k}).$$

\noi
In other words, $\mathbf X^k_j$ is the multiplier obtained by replacing
$\xi_j$ by $\xi_j + \cdots + \xi_{j+k}$ and increasing all the indices
after $\xi_j$ by $k$. We shall often write $\xi_{ij}$ for $\xi_i + \xi_j$, $\xi_{ijk}$ for
$\xi_i + \xi_j + \xi_k$, etc.

\subsection{Preliminary lemmas}
In this subsection, we recall some useful lemmas. We first record the following lower bound on the resonant function. For a proof, see \cite[Lemma 2.3]{FT}.
\begin{lemma}
\label{LEM:factor}
Let $\al > 1$ and $\xi_1, \dots, \xi_4 \in \R$ be such that $\xi_1 + \xi_2 + \xi_3 + \xi_4 = 0$. Then, we have
\begin{align*}
\big| |\xi_1|^\al - |\xi_2|^\al + |\xi_3|^\al - |\xi_4|^\al \big| \ges |\xi_1 + \xi_2| |\xi_2 + \xi_3| |\xi_{\text{max}}|^{\al - 2},
\end{align*}

\noi
where $\xi_{\text{max}} = \max \{ |\xi_1|, |\xi_2|, |\xi_3|, |\xi_4| \}$.
\end{lemma}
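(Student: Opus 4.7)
The plan is to exploit that $\phi := |\xi_1|^\al - |\xi_2|^\al + |\xi_3|^\al - |\xi_4|^\al$, viewed as a function on the hyperplane $\xi_1+\xi_2+\xi_3+\xi_4 = 0$, vanishes on both $\{\xi_1+\xi_2 = 0\}$ (since then $\xi_4 = -\xi_3$) and $\{\xi_2+\xi_3 = 0\}$ (since then $\xi_4 = -\xi_1$). One should therefore be able to extract the factors $\xi_1+\xi_2$ and $\xi_2+\xi_3$ explicitly from $\phi$ via a mixed second-difference identity, leaving a remainder whose size is controlled by $|\xi_{\text{max}}|^{\al-2}$.

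First I would introduce the coordinates $u := \xi_1+\xi_2$ and $v := \xi_2+\xi_3$, keeping $\xi_2$ as a free parameter, so that $\xi_1 = u-\xi_2$, $\xi_3 = v-\xi_2$, and $\xi_4 = \xi_2 - u - v$. If $uv = 0$ both sides vanish and there is nothing to prove; otherwise, setting $h(x) := |x|^\al$ and using $h(-x) = h(x)$, one has
\[
\phi = \big[h(u-\xi_2) - h(-\xi_2)\big] - \big[h(u+v-\xi_2) - h(v-\xi_2)\big].
\]
Since $h$ is $C^1$ for $\al > 1$ with second derivative $h''(x) = \al(\al-1)|x|^{\al-2}$ locally integrable (precisely because $\al > 1$), two applications of the fundamental theorem of calculus yield
\[
\phi = -\al(\al-1) \int_0^u \int_0^v |s+t-\xi_2|^{\al-2} \, dt \, ds.
\]
Taking absolute values, the proof reduces to bounding this double integral below by $c_\al \, |u||v||\xi_{\text{max}}|^{\al-2}$.

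The key observation is that at the four corners $(0,0), (u,0), (0,v), (u,v)$ of $R := [0,u] \times [0,v]$ the affine expression $s+t-\xi_2$ takes precisely the values $-\xi_2, \xi_1, \xi_3, -\xi_4$, so by convexity of $|\cdot|$
\[
\sup_{(s,t) \in R} |s+t-\xi_2| = |\xi_{\text{max}}|.
\]
When $1 < \al \leq 2$, the function $|x|^{\al-2}$ is nonincreasing in $|x|$, so the integrand is pointwise at least $|\xi_{\text{max}}|^{\al-2}$ on $R$, and the bound is immediate.

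The main obstacle will be the regime $\al > 2$, where $|x|^{\al-2}$ vanishes at $0$ and no pointwise lower bound is available; here one must localize near a corner of $R$ achieving the supremum $|\xi_{\text{max}}|$. After a symmetry-based relabeling, assume this corner is $(u,0)$, i.e., $|\xi_1| = |\xi_{\text{max}}|$. Using $|u|, |v| \leq 2|\xi_{\text{max}}|$ (triangle inequality), on the sub-rectangle $\{(s,t) \in R : |s-u| \leq |u|/8, \, |t| \leq |v|/8\}$ one has
\[
|s+t-\xi_2| = |\xi_1 + (s-u) + t| \geq |\xi_1| - \tfrac{|u|+|v|}{8} \geq \tfrac{1}{2}|\xi_{\text{max}}|,
\]
so the integrand is bounded below by $2^{2-\al}|\xi_{\text{max}}|^{\al-2}$ on a region of measure $|u||v|/64$. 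Integrating gives the desired lower bound with a constant depending only on $\al$, and the remaining corners are handled identically.
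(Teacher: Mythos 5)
Your proof is correct. The paper itself does not prove this lemma -- it defers to Forlano--Trenberth \cite[Lemma~2.3]{FT} -- so there is no in-paper argument to benchmark against, but your mixed-second-difference representation is a clean, self-contained proof. The chain is sound: with $u = \xi_1+\xi_2$, $v = \xi_2+\xi_3$, $\xi_4 = \xi_2-u-v$ and $h=|\cdot|^\al$, the identity $\phi = -\al(\al-1)\int_0^u\int_0^v |s+t-\xi_2|^{\al-2}\,dt\,ds$ is valid because $h'$ is absolutely continuous exactly when $h''\in L^1_{\mathrm{loc}}$, i.e.\ when $\al>1$; the affine map $(s,t)\mapsto s+t-\xi_2$ sends the four corners of $R=[0,u]\times[0,v]$ to $-\xi_2,\xi_1,\xi_3,-\xi_4$, so convexity of $|\cdot|$ gives $\sup_R|s+t-\xi_2| = |\xi_{\max}|$; and the two regimes $1<\al\le 2$ (pointwise bound) and $\al>2$ (localize near a maximizing corner, measure $\ge |u||v|/64$, value $\ge |\xi_{\max}|/2$) are both handled correctly, using $|u|,|v|\le 2|\xi_{\max}|$.

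One small imprecision worth fixing: you say ``after a symmetry-based relabeling, assume this corner is $(u,0)$,'' but the roles of $\xi_1,\xi_2,\xi_3,\xi_4$ in your construction of $R$ are not interchangeable by an actual symmetry. Fortunately no relabeling is needed: the same localization argument applies verbatim at whichever of the four corners realizes the supremum -- at $(0,0)$ or $(u,v)$ one simply replaces $\xi_1$ by $\xi_2$ or $\xi_4$ in the display $|s+t-\xi_2| = |\cdot + \cdots| \ge |\xi_{\max}| - (|u|+|v|)/8$ -- so it is cleaner (and more honest) to phrase it as ``localize near whichever corner attains the supremum'' rather than invoke a symmetry that isn't there. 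This is a presentation fix, not a gap.
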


We now recall the following counting lemma, which will be useful for well-posedness of the cubic fNLS \eqref{fnls1} on the circle. For a proof, see \cite[Lemma 2]{STz}.
\begin{lemma}
\label{LEM:count}
Let $I, J \subset \R$ be two intervals and $g$ be a $C^1$ function on $\R$. Then, we have
\begin{align*}
\# \{ k \in \J \cap \Z: g(k) \in I \} \leq \frac{|I|}{\inf_{x \in J} |g'(x)|} + 1.
\end{align*}
\end{lemma}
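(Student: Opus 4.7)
The plan is to argue by a mean value theorem estimate on consecutive integer points whose images land in $I$. Set $m := \inf_{x \in J} |g'(x)|$. If $m = 0$, the bound reads $+\infty$ and nothing is to prove, so I would assume $m > 0$. Then $g'$ is a continuous non-vanishing function on the interval $J$, hence has constant sign, so $g$ is strictly monotonic on $J$.

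Next I would enumerate the integers in question. Let $k_1 < k_2 < \cdots < k_N$ denote the elements of $\{ k \in J \cap \Z : g(k) \in I \}$, where $N$ is the cardinality we wish to bound; if $N \leq 1$ the bound is immediate, so I may assume $N \geq 2$. For each $1 \leq j \leq N-1$, the mean value theorem provides $\xi_j \in (k_j, k_{j+1}) \subset J$ with
\begin{equation*}
|g(k_{j+1}) - g(k_j)| \,=\, |g'(\xi_j)| (k_{j+1} - k_j) \,\geq\, m \cdot (k_{j+1} - k_j) \,\geq\, m,
\end{equation*}
since consecutive distinct integers differ by at least $1$.

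Finally, by monotonicity of $g$ on $J$, the values $g(k_1), \ldots, g(k_N)$ are themselves monotonic, so the differences $g(k_{j+1}) - g(k_j)$ all share the same sign. Consequently,
\begin{equation*}
\sum_{j=1}^{N-1} |g(k_{j+1}) - g(k_j)| \,=\, |g(k_N) - g(k_1)| \,\leq\, |I|,
\end{equation*}
the last inequality holding because $g(k_1), g(k_N) \in I$ and $I$ is an interval. Combining with the lower bound above gives $m(N-1) \leq |I|$, i.e.\ $N \leq |I|/m + 1$, which is the asserted estimate.

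There is essentially no obstacle here; the one point warranting mild care is the justification that $g$ is monotonic on $J$ (so that the telescoping collapses to $|g(k_N)-g(k_1)|$ rather than merely a sum bounded by $|I|$), which follows from the intermediate value theorem applied to the continuous function $g'$ on the interval $J$ together with $\inf_J |g'| > 0$.
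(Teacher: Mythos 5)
Your proof is correct. The paper does not prove this lemma itself but simply cites \cite[Lemma 2]{STz}, and your argument is the standard one for this counting bound. One minor remark: the monotonicity and telescoping step can be bypassed entirely by applying the mean value theorem once between the extreme points $k_1$ and $k_N$, which gives $|I| \geq |g(k_N) - g(k_1)| = |g'(\xi)|\,(k_N - k_1) \geq m (N-1)$ directly; your route via constant sign of $g'$ is nevertheless perfectly valid.
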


\subsection{Sobolev spaces}
\label{SUBSEC:spaces}

We denote $\jb{\nb}$ as the Fourier multiplier with symbol $\jb{\xi} = (1+|\xi|^2)^\frac{1}{2}$. We recall the definition of the standard Sobolev space $H^s(\M)$:
\[ \| f \|_{H^s} :=\|\jb{\nb}^s f\|_{L^2(\M)} = \big\| \jb{\xi}^s \ft f(\xi) \big\|_{L^2(\ft{\M})}.\]

\noi
Given $M \geq1$, we define the  $H^s_M$-norm adapted to the parameter $M\geq 1$
by 
\begin{align}
\| f \|_{H^s_M} := \big\|(M^2 + \xi^2)^\frac{s}{2} \ft f(\xi)\big\|_{L^2(\M)}.
\label{SobM}
\end{align}

\noi
Clearly, 
the $H^s_M$-norm is equivalent to the standard $H^s$-norm.
When $s< 0$, however, it follows from the dominated convergence theorem that
\begin{align*}
\lim_{M\rightarrow \infty} \| f \|_{ H_M^s} = 0
\end{align*}

\noi
for any $f \in H^s(\M)$. 

We record the follow lemma. For a proof, see \cite[Lemma 3.4]{GKO}.
\begin{lemma}
\label{LEM:prod}
Let $0 \leq s \leq 1$ and $1 < p, q, r < \infty$ satisfying $\frac{1}{p} + \frac{1}{q} = \frac{1}{r} + s$. Then, we have
\begin{align*}
\| \jb{\nb}^{-s} (fg) \|_{L^r (\T)} \les \| \jb{\nb}^{-s} f \|_{L^p (\T)} \| \jb{\nb}^s g \|_{L^q (\T)}.
\end{align*}
\end{lemma}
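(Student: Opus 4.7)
The plan is to prove this fractional Leibniz-type estimate via a Littlewood--Paley paraproduct decomposition. Writing $f = \sum_N \pi_N f$ and $g = \sum_N \pi_N g$ for dyadic $N \ge 1$, I would split the product as
\begin{align*}
fg = \sum_{M \ll N} \pi_M f \cdot \pi_N g + \sum_{N \ll M} \pi_M f \cdot \pi_N g + \sum_{M \sim N} \pi_M f \cdot \pi_N g =: \Pi_1 + \Pi_2 + \Pi_3,
\end{align*}
corresponding to low-high, high-low, and high-high interactions. The basic tool used throughout is the dyadic rescaling identity $\|\pi_N f\|_{L^p} \sim N^s \|\jb{\nabla}^{-s} \pi_N f\|_{L^p}$ (and analogously for $\jb{\nabla}^s g$), which comes from the fact that $\jb{\nabla}^{\pm s}$ acts as a scalar $\sim \jb{N}^{\pm s}$ on a dyadic annulus up to a bounded Fourier multiplier.

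For $\Pi_1$, each summand $\pi_M f \cdot \pi_N g$ has spatial frequency concentrated at scale $N$, so $\jb{\nabla}^{-s}$ contributes a pointwise factor $\sim \jb{N}^{-s}$. I would combine this gain with H\"older's inequality in $L^r$ (using the exponent pair $(p, q_1)$ with $\frac{1}{p} + \frac{1}{q_1} = \frac{1}{r}$), Bernstein's inequality $\|\pi_N g\|_{L^{q_1}} \lesssim N^s \|\pi_N g\|_{L^q}$ (the indices fit exactly because $\frac{1}{q} - \frac{1}{q_1} = s$ by the hypothesis and $s \ge 0$), and the dyadic rescaling identities, to obtain a summand-level bound with surplus factor $(M/N)^s$. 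This sums geometrically in $M \ll N$, and the outer dyadic sum in $N$ is absorbed by the Littlewood--Paley square function characterization of $L^r(\T)$, valid since $1 < r < \infty$. The piece $\Pi_2$ is treated symmetrically with the roles of $f$ and $g$ reversed.

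The main obstacle is the high-high piece $\Pi_3$, since the product $\pi_N f \cdot \pi_N g$ can have output supported at \emph{every} frequency scale $K \lesssim N$, so no single pointwise $\jb{\nabla}^{-s}$-gain is available for each $N$. I would bound $\Pi_3$ by first writing $\pi_N f \cdot \pi_N g = (N^{-s}\pi_N f)(N^s \pi_N g)$, applying pointwise Cauchy--Schwarz in $N$, and then H\"older in $L^p \times L^q$; the Littlewood--Paley square function identities
\[
\big\|(\textstyle\sum_N N^{-2s}|\pi_N f|^2)^{1/2}\big\|_{L^p} \sim \|\jb{\nabla}^{-s} f\|_{L^p}, \qquad \big\|(\textstyle\sum_N N^{2s}|\pi_N g|^2)^{1/2}\big\|_{L^q} \sim \|\jb{\nabla}^s g\|_{L^q}
\]
then produce the desired bound on $\|\Pi_3\|_{L^{r_0}}$ with $\frac{1}{r_0} = \frac{1}{p} + \frac{1}{q} = \frac{1}{r}+s$, which is mapped to $L^r$ by a Sobolev embedding for $\jb{\nabla}^{-s}$. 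The index relation is used in exact equality to make the powers of $N$ cancel, and any slack in either direction would break this cancellation; the restriction $0 \le s \le 1$ keeps Sobolev and Bernstein inequalities in their classical range, while $1 < p, q, r < \infty$ secures the validity of the square function equivalence used throughout.
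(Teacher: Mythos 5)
The paper does not actually prove this lemma; it cites \cite[Lemma 3.4]{GKO}, where the estimate is deduced by duality from the fractional Leibniz rule together with Sobolev embedding. Your direct Littlewood--Paley attack is therefore a genuinely different route, and it is viable in principle, but as written it has a real gap in the high--low piece $\Pi_2$. The problem is \emph{not} symmetric in $f$ and $g$, because $f$ carries $-s$ derivatives while $g$ carries $+s$ derivatives. For a summand $\pi_M f\cdot\pi_N g$ of $\Pi_2$ (so $N\ll M$), the output frequency is $\sim M$ and the gain $\jb{M}^{-s}$ from $\jb{\nb}^{-s}$ is exactly cancelled by $\|\pi_M f\|_{L^p}\lesssim M^{s}\|\jb{\nb}^{-s}\pi_M f\|_{L^p}$, while the low-frequency factor gives $\|\pi_N g\|_{L^{q_1}}\lesssim N^{s}\|\pi_N g\|_{L^q}\lesssim\|\jb{\nb}^{s}\pi_N g\|_{L^q}$ with no decay in $N$: the surplus factor is $O(1)$, not $O((N/M)^s)$, and the inner sum over $N\ll M$ diverges logarithmically. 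If instead you literally reverse the roles and Bernstein the high factor $\pi_M f$, you produce the factor $(M/N)^s$ with $M\gg N$, which blows up. The correct treatment of $\Pi_2$ abandons the geometric summation entirely: bound $|\pi_{\ll M}g|$ pointwise by the Hardy--Littlewood maximal function of $g$, keep the weighted square function $\big(\sum_M M^{-2s}|\pi_M f|^2\big)^{1/2}\sim\jb{\nb}^{-s}f$ in $L^p$, and use the Sobolev embedding $W^{s,q}\hookrightarrow L^{q_1}$ --- i.e.\ exactly the vector-valued machinery you invoke only for $\Pi_3$. A related bookkeeping error affects $\Pi_1$: you cannot first sum the $L^r$ norms geometrically in $M$ and afterwards ``absorb the outer sum in $N$ by the square function,'' since $\sum_N\|A_N\|_{L^r}$ is not controlled by $\big\|(\sum_N|A_N|^2)^{1/2}\big\|_{L^r}$; the maximal-function and square-function bounds must be kept at the pointwise level until the very last step.

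Separately, several of your steps silently impose exponent restrictions that are not among the stated hypotheses: the H\"older--Bernstein pairing with $\frac{1}{q_1}=\frac1q-s$ requires $\frac1q\geq s$, and the final Sobolev embedding $L^{r_0}\to W^{-s,r}$ for $\Pi_3$ requires $r_0=\big(\frac1p+\frac1q\big)^{-1}>1$, i.e.\ $\frac1r+s<1$; neither follows from $0\le s\le1$, $1<p,q,r<\infty$, $\frac1p+\frac1q=\frac1r+s$ alone. (Testing the claimed inequality with $g\equiv1$ shows that $\frac1q\geq s$ is in fact necessary for the statement itself, so some implicit restriction is unavoidable; in the paper's only application, in Remark \ref{RMK:fub_ibp}, one has $\frac1p+\frac1q=1-\eps<1$, so this is harmless there.) You should record where these conditions enter, or switch to the duality argument, which localizes all such issues in a single application of the Leibniz rule and Sobolev embedding.
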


\subsection{Fourier restriction norm method}
\label{SEC:Xsb}
In this subsection, we recall the Fourier restriction norm method introduced by Bourgain \cite{BO1}. Specifically, we recall the $X^{s, b}$-space defined by the norm
\begin{align}
\label{Xsb1}
\begin{split}
\| u\|_{X^{s, b}} &:= 
\|\jb{\xi}^s \jb{\tau + |\xi|^\al}^b \ft {u}(\tau, \xi)\|_{L_{\tau}^2 L^2_\xi (\R \times \ft\M)} \\
&\hphantom{:}= \| \jb{\dx}^s \jb{\dt}^b S(-t) u \|_{L_{t}^2 L_x^2 (\R \times \M)}.
\end{split}
\end{align}


\noi
Also, in view of \eqref{Xsb1} and \eqref{gauge2}, we introduced the following $Y^{s,b}$-norm on the circle (i.e.~$\M = \T$): 
\begin{align*}
\| u\|_{Y^{s, b}} := 
\|\jb{n}^s \jb{\tau + \mu(n)}^b \ft {u}(\tau, n)\|_{L^2_\tau \l_n^2 (\R \times \Z)},
\end{align*}

\noi
where
$\mu(n)=|n|^\al - |\ft u_0(n)|^2$.
Moreover,
for any time interval $I\subset \R$, we define the following restricted spaces:
\begin{align*}
\| u \|_{X_I^{s, b}} &:= \inf \{ \| v \|_{X^{s, b}}: v|_{I} = u \}, \\
\| u \|_{Y_I^{s, b}} &:= \inf \{ \| v \|_{Y^{s, b}}: v|_{I} = u \}.
\end{align*}

\noi
If $I = [-T, T]$ for some $T > 0$, we simply write $X_T^{s, b} = X_{[-T, T]}^{s, b}$ and $Y_T^{s, b} = Y_{[-T, T]}^{s, b}$. Given any $s \in \R$ and $b > \frac 12$, we have $X_T^{s, b} \embeds C([-T, T]; H^s (\R))$ and $Y_T^{s, b} \embeds C([-T, T]; H^s (\R))$.

The following two lemmas provide some basic information for us to do analysis by using the Fourier restriction norm method. See, for example, \cite{Tao} for proofs of these lemmas.
The first lemma states the embedding properties of the $X^{s,b}$-spaces.
\begin{lemma}
\label{LEM:emb}
Let $s \in \R$, $p > 2$, $b \geq \frac 12 - \frac 1p$, and $I \subset \R$ be an interval. 
Then, we have 
\begin{align*}
\Vert u \Vert_{L_I^p H_x^s} \lesssim \Vert u \Vert_{X_I^{s,b}}.
\end{align*}

\noi
Moreover, if $b > \frac 12$, we have
\begin{align*}
\Vert u \Vert_{C_I H_x^s} \lesssim \Vert u \Vert_{X_I^{s,b}}.
\end{align*}

\noi
The same embeddings hold if the $X_T^{s, b}$-norm is replaced by the $Y_T^{s, b}$-norm.
\end{lemma}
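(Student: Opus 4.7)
My plan is to reduce to the case $s=0$ and then to the case of the full line $I=\R$, and finally prove the resulting time-frequency embedding using Minkowski together with a one-dimensional Sobolev embedding in the time variable. First, since $\jb{\nabla}^s$ is a Fourier multiplier in $x$ which commutes with the linear propagator $S(t)$, it is an isometry $X^{s,b}\to X^{0,b}$ and $L^p_I H^s_x \to L^p_I L^2_x$, so it suffices to prove the bound with $s=0$. Second, by the definition of $X^{s,b}_I$ as an infimum over extensions $v\in X^{s,b}$ with $v|_I=u$, it suffices to prove the global-in-time bound $\|v\|_{L^p_t L^2_x}\lesssim \|v\|_{X^{0,b}}$ for all $v\in X^{0,b}(\R\times\M)$ and then restrict to $I$ (which only decreases the left-hand side).

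Next, I would conjugate by the linear group: set $w(t):=S(-t)v(t)$, so that $\|w(t)\|_{L^2_x}=\|v(t)\|_{L^2_x}$ for every $t$, and by \eqref{Xsb1} one has $\|v\|_{X^{0,b}}=\|\jb{\partial_t}^b w\|_{L^2_t L^2_x}$. Hence the bound to prove becomes $\|w\|_{L^p_t L^2_x}\lesssim \|\jb{\partial_t}^b w\|_{L^2_t L^2_x}$. Since $p\geq 2$, Minkowski's integral inequality gives $\|w\|_{L^p_t L^2_x}\leq \|w\|_{L^2_x L^p_t}$. For the inner norm, I apply the one-dimensional Sobolev embedding $H^b(\R)\hookrightarrow L^p(\R)$ in the time variable, which holds precisely when $b\geq \tfrac12-\tfrac1p$, to get $\|w(\cdot,x)\|_{L^p_t}\lesssim \|\jb{\partial_t}^b w(\cdot,x)\|_{L^2_t}$. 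Taking $L^2_x$ on both sides and using Plancherel yields the claimed estimate. For the stronger statement with $b>\tfrac12$, I would instead estimate $\|w(t,x)\|_{L^\infty_t}$ pointwise in $x$ by Cauchy--Schwarz applied to $w=\int e^{it\tau}\widehat{w}(\tau,\cdot)d\tau$, peeling off a factor $\jb{\tau}^{-b}$ whose $L^2$-norm is finite exactly when $b>\tfrac12$; taking $L^2_x$ and using that $w$ is continuous in $t$ with values in $L^2_x$ (easy for Schwartz functions and extended by density) gives the $C_t L^2_x$ embedding, which transfers to $C_I H^s_x$ after undoing the reductions.

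Finally, the argument for $Y^{s,b}_T$ on $\T$ is verbatim the same: the only change is that the base group is $e^{-it\mu(n)}$ in place of $e^{-it|n|^\alpha}$, with $\mu(n)=|n|^\alpha-|\widehat{u_0}(n)|^2$. Since $\mu(n)$ is real, the map $v\mapsto (e^{-it\mu(n)}\widehat v(n))_n$ is still an $\ell^2_n$-isometry for every $t$, so the conjugation step goes through unchanged and the same Minkowski--Sobolev argument closes the bound. I expect the main (very mild) subtlety to be the careful handling of the extension/restriction in the infimum defining $X^{s,b}_I$ and $Y^{s,b}_I$: one must check that extending by zero is not used (it would break the $X^{s,b}$ norm), but rather one passes to an almost-optimal extension $v$, applies the global estimate to $v$, and notes $\|u\|_{L^p_I H^s_x}\leq \|v\|_{L^p_t H^s_x}$; taking the infimum over extensions gives the restricted-in-time estimate. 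No additional tool beyond Plancherel, Minkowski and the one-dimensional Sobolev embedding is needed.
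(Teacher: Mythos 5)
Your proof is correct and is essentially the standard argument that the paper itself omits (it simply cites Tao's book for these facts): reduction to $s=0$ and to a near-optimal global extension, conjugation by the linear group so that the $X^{0,b}$-norm becomes $\|\jb{\partial_t}^b w\|_{L^2_{t,x}}$, and then Minkowski plus the one-dimensional Sobolev embedding $H^b(\R)\hookrightarrow L^p(\R)$ (respectively Cauchy--Schwarz in $\tau$ for $b>\tfrac12$), with the $Y^{s,b}$ case identical since $\mu(n)$ is real. No gaps; the only implicit convention is that the first estimate is for finite $p$, the case $p=\infty$ being covered by the second statement.
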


We also recall the following linear estimates of the $X^{s,b}$-norm.
\begin{lemma}
\label{LEM:lin}
Let $s \in \R$, $b > \frac 12$, $-\frac 12 < b_1 < b_2 < \frac 12$, and $T > 0$.

\smallskip \noi
\textup{(i)} We have the following homogeneous linear estimate:
\begin{align*}
\| S(t) f \|_{X_T^{s, b}} \les \| f \|_{H^s}.
\end{align*}

\smallskip \noi
\textup{(ii)} We have the following inhomogeneous linear estimate:
\begin{align*}
\bigg\| \int_0^t S(t - t') F(t') dt' \bigg\|_{X_T^{s, b}} \les \| F \|_{X_T^{s, b - 1}}.
\end{align*}

\smallskip \noi
\textup{(iii)} We have the following time localization estimate:
\begin{align*}
\| u \|_{X_T^{s, b_1}} \les T^{b_2 - b_1} \| u \|_{X_T^{s, b_2}}.
\end{align*}
\end{lemma}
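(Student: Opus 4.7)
The plan is to reduce all three estimates to one-dimensional facts about $H^b_t$ functions by exploiting the identity $\|u\|_{X^{s,b}} = \|\jb{\dx}^s \jb{\dt}^b S(-t) u\|_{L^2_{t,x}}$ from \eqref{Xsb1}: conjugation by $S(-t)$ removes the dispersion and converts $X^{s,b}$-estimates into weighted $L^2_x H^b_t$-estimates. Throughout, I assume $0 < T \le 1$; the general case follows by subdividing $[-T,T]$ into $O(T^{-1})$ unit subintervals and iterating, which costs only a $T$-independent constant because $b,b_1,b_2$ are fixed.

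For (i), I extend $S(t)f$ from $[-T,T]$ to $\R$ via $v(t,x) := \eta(t) S(t) f(x)$, which is a valid extension since $\eta \equiv 1$ on $[-1,1] \supset [-T,T]$. Then $S(-t) v(t,x) = \eta(t) f(x)$, so the $X^{s,b}$-norm factors as $\|v\|_{X^{s,b}} = \|\jb{\tau}^b \widehat\eta(\tau)\|_{L^2_\tau} \cdot \|f\|_{H^s}$, where the $\tau$-factor is finite because $\eta \in \mathcal{S}(\R)$ and $b$ is fixed.

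For (ii), I fix an extension $\widetilde F$ of $F$ with $\|\widetilde F\|_{X^{s,b-1}} \le 2\|F\|_{X_T^{s,b-1}}$ and define $w(t) := \eta(t)\int_0^t S(t-t')\widetilde F(t')\,dt'$, which restricts to the original Duhamel integral on $[-T,T]$. After conjugating by $S(-t)$ and writing $\widetilde F$ through its space-time Fourier transform, the inner time integral produces $\int_0^t e^{-it'(\tau' + |\xi|^\al)}\,dt'$ against the Fourier data of $\widetilde F$. I split by modulation: on $\{|\tau' + |\xi|^\al| \le 1\}$, Taylor-expand the exponential in $t'$ to obtain a sum of terms of the form $\eta(t) t^k/k! \cdot \widehat{\widetilde F}(\tau', \xi)$ whose $X^{s,b}$-norm is dominated by $\|\widetilde F\|_{X^{s,b-1}}$ uniformly; on $\{|\tau'+|\xi|^\al|>1\}$, evaluate the $t'$-integral in closed form to gain the weight $\jb{\tau'+|\xi|^\al}^{-1}$, which absorbs the mismatch between $b-1$ and $b$. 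Summing the two regions and using $b > \tfrac12$ to dispose of the residual $H^b_t$-norm of the cutoff $\eta$ yields the claim.

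For (iii), I pick an extension $\widetilde u$ of $u$ with $\|\widetilde u\|_{X^{s,b_2}} \le 2 \|u\|_{X_T^{s,b_2}}$ and use $\eta_T \widetilde u$ as the extension of $u|_{[-T,T]}$ for the $X^{s,b_1}$-norm, which is legitimate since $\eta_T \equiv 1$ on $[-T,T]$. Conjugating by $S(-t)$ and freezing $\xi$ reduces matters to the one-dimensional claim
\[
\|\eta(t/T)\, g\|_{H^{b_1}(\R)} \lesssim T^{b_2-b_1} \|g\|_{H^{b_2}(\R)}, \qquad -\tfrac12 < b_1 < b_2 < \tfrac12,
\]
which I prove by dyadically decomposing $g$ in modulation and treating the contributions of $|\tau| \lesssim T^{-1}$ and $|\tau| \gtrsim T^{-1}$ separately, using the Schwartz decay of $T\widehat\eta(T\cdot)$. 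The non-trivial feature is that the window $-\tfrac12 < b_1 < b_2 < \tfrac12$ is sharp and a direct H\"older argument loses endpoint logarithms; this is the main technical step of the whole lemma, and I will follow the Bourgain--Kenig--Ponce--Vega computation (cf.~Lemma 2.11 in Tao's \emph{Nonlinear Dispersive Equations}) to handle it.
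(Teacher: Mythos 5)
The paper does not actually prove this lemma — it cites \cite{Tao} — and your argument is precisely the standard one from that reference (conjugation by $S(-t)$ via \eqref{Xsb1}, the cutoff extension $\eta(t)S(t)f$ for the homogeneous part, the modulation splitting with Taylor expansion for the Duhamel term, and the one-dimensional $H^{b}_t$ time-localization lemma for (iii)), so it is correct and matches the intended proof. One small caveat: your reduction of general $T>0$ to $T\le 1$ by subdividing into unit intervals does not yield a $T$-independent constant — for (i) one in fact has $\|S(t)f\|_{X_T^{s,b}} \gtrsim T^{1/2}\|f\|_{H^s}$ for large $T$, since the $X^{s,b}_T$-norm controls $\|S(t)f\|_{L^2_{[-T,T]}H^s_x}$ — but this is harmless here because the lemma is only ever invoked for $T\lesssim 1$, and part (iii) is trivial for $T\ge 1$ by monotonicity of the weight $\jb{\tau+|\xi|^\al}^{b}$ in $b$.
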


We also need the following time localization estimate with a sharp time cutoff. For a proof, see \cite[Lemma 4.4]{Bring2}.
\begin{lemma}
\label{LEM:time}
Let $s \in \R$, $-\frac 12 < b' < \frac 12$. Then, for any $t_1 < 0 < t_2$, we have
\begin{align*}
\| u \ind_{[t_1, 0]} \|_{X^{s, b'}} &\les \| u \|_{X^{s, b'}}, \\
\| u \ind_{[0, t_2]} \|_{X^{s, b'}} &\les \| u \|_{X^{s, b'}}.
\end{align*}
\end{lemma}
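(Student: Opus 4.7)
The plan is to reduce the estimate to a purely one-dimensional statement about the $H^{b'}(\R)$-boundedness of multiplication by sharp indicator functions, and then prove that one-dimensional statement by combining the Gagliardo (Slobodeckij) characterization of $H^{b'}$ with a fractional Hardy inequality.

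\textbf{Reduction to one dimension.} First I would use the isometry $v(t,x) := S(-t) u(t,x)$, which by the second line of \eqref{Xsb1} identifies $\|u\|_{X^{s,b'}}$ with $\|v\|_{H^{b'}_t H^s_x}$. Since $S(-t)$ acts only in space, it commutes with multiplication by a function of $t$, so $S(-t)(u\, \ind_{[t_1,0]}) = v\, \ind_{[t_1,0]}$. Taking the spatial Fourier transform, Plancherel in $x$ together with Fubini reduces matters to the one-dimensional statement
\begin{equation*}
\| \ind_I f \|_{H^{b'}(\R)} \lesssim \| f \|_{H^{b'}(\R)}
\end{equation*}
for every interval $I\subset\R$, with implicit constant independent of $I$; applied fiberwise in $\xi$ and integrated against the weight $\jb{\xi}^{2s}$, this gives both bounds in the statement.

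\textbf{Reduction to a Heaviside multiplier.} The pointwise multiplier by $\ind_I$ is self-adjoint, so by duality on $H^{b'}$ versus $H^{-b'}$ it suffices to treat $0 \leq b' < \tfrac{1}{2}$; the case $b' = 0$ is trivial from $\|\ind_I f\|_{L^2} \leq \|f\|_{L^2}$. For $0 < b' < \tfrac{1}{2}$, I would write $\ind_{[t_1,0]} = \ind_{(-\infty,0]} - \ind_{(-\infty,t_1]}$ and use translation invariance of the $H^{b'}(\R)$-norm to reduce to showing that multiplication by the single Heaviside function $\chi := \ind_{(-\infty,0]}$ is bounded on $H^{b'}(\R)$. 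The companion estimate with $\ind_{[0,t_2]}$ is entirely symmetric (replace $\chi$ by $1-\chi$).

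\textbf{Gagliardo seminorm and Hardy inequality.} Using the equivalent norm
\begin{equation*}
\|g\|_{H^{b'}}^2 \sim \|g\|_{L^2}^2 + \iint_{\R^2} \frac{|g(x)-g(y)|^2}{|x-y|^{1+2b'}} \, dx\, dy,
\end{equation*}
the $L^2$ piece is trivial, and I would split the double integral for $\chi f$ according to whether $x,y$ lie on the same side of $0$ or on opposite sides. The same-side terms are either identically zero (both in $(0,\infty)$) or dominated by the Gagliardo seminorm of $f$ itself. The cross term reduces, after computing the inner $y$-integral explicitly, to
\begin{equation*}
\int_{-\infty}^{0} \frac{|f(x)|^2}{|x|^{2b'}} \, dx,
\end{equation*}
which is controlled by $\|f\|_{\dot H^{b'}(\R)}^2$ thanks to the fractional Hardy inequality. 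The main obstacle is precisely this Hardy step: it is sharp and fails at $b' = \tfrac{1}{2}$, which is why the strict condition $-\tfrac{1}{2} < b' < \tfrac{1}{2}$ is essential and why sharp cutoffs in time are allowed only for $X^{s,b'}$ with these values of $b'$. Everything else in the argument is routine once this fiberwise $H^{b'}$-multiplier bound is in hand.
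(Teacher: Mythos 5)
Your argument is correct. The reduction is sound: since $\ind_{[t_1,0]}(t)$ is scalar in $x$ it commutes with $S(-t)$, so by \eqref{Xsb1} and Plancherel in $x$ the claim becomes the uniform (in the interval $I$) bound $\|\ind_I f\|_{H^{b'}(\R)} \les \|f\|_{H^{b'}(\R)}$ applied fiberwise to $t \mapsto e^{it|\xi|^\al}\ft u(t,\xi)$; translation invariance and self-adjointness of the multiplier reduce this to the Heaviside function and to $0 < b' < \tfrac12$; and the Gagliardo-seminorm splitting plus the one-dimensional fractional Hardy inequality (valid precisely for $0 < b' < \tfrac12$) closes the cross term. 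Note, however, that the paper does not prove this lemma at all — it simply cites \cite[Lemma 4.4]{Bring2} — so you are supplying a proof where the authors give only a reference. The standard proofs in the literature (including the cited one) work on the Fourier side, e.g.\ by decomposing the sharp cutoff into dyadic frequency pieces or by invoking boundedness of the Hilbert transform on $L^2(\jb{\tau}^{2b'}d\tau)$, which is an $A_2$ weight exactly when $|b'| < \tfrac12$; your physical-space route via Gagliardo seminorms and Hardy is a legitimate, self-contained alternative that makes the failure at $b' = \tfrac12$ equally transparent. One small point worth stating explicitly if you write this up: for the fiberwise reduction you need the implicit constant in the one-dimensional multiplier bound to be independent of $\xi$, which it is, since the $H^{b'}_t$-norm of $e^{it|\xi|^\al}\ft u(t,\xi)$ is exactly $\|\jb{\tau+|\xi|^\al}^{b'}\ft u(\tau,\xi)\|_{L^2_\tau}$ and the multiplier estimate is applied at fixed $\xi$ with a universal constant.
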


Finally, we establish the following lemma on continuity of the $X^{s, b}_T$-norm with respect to $T$.

\begin{lemma}
\label{LEM:Xsb_cty}
Let $s \in \R$, $b \in \R$, and $w \in X^{s, b}$. Then, the function
\begin{align*}
T \mapsto \| w \|_{X_T^{s, b}}
\end{align*}

\noi
is continuous on $\R_+$.
\end{lemma}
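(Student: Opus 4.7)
The plan is to combine monotonicity with a weak-compactness argument for lower semi-continuity and a sharp time-cutoff construction for upper semi-continuity. Set $g(T) := \|w\|_{X_T^{s,b}}$. Since any $v \in X^{s,b}$ that extends $w|_{[-T_2, T_2]}$ also extends $w|_{[-T_1, T_1]}$ whenever $T_1 \le T_2$, the function $g$ is non-decreasing; the one-sided limits $g(T_0^{\pm})$ therefore exist and satisfy $g(T_0^-) \le g(T_0) \le g(T_0^+)$, so it suffices to prove left- and right-continuity separately.

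For left-continuity, take $T_n \nearrow T_0$ and near-optimal extensions $v_n$ with $v_n|_{[-T_n, T_n]} = w|_{[-T_n, T_n]}$ and $\|v_n\|_{X^{s,b}} \le g(T_n) + 1/n \le g(T_0^-) + 1/n$. Since $X^{s,b}$ is a separable Hilbert space, a subsequence satisfies $v_{n_k} \rightharpoonup v$. For any $\phi \in C_c^\infty((-T_0, T_0) \times \M)$ (which lies in $X^{-s, -b}$ since $\widehat{\phi}$ has Schwartz decay), the time support of $\phi$ is contained in $(-T_{n_k}, T_{n_k})$ for $k$ large, so $\langle v_{n_k}, \phi \rangle_{L^2} = \langle w, \phi \rangle_{L^2}$; passing to the limit yields $v = w$ distributionally on $(-T_0, T_0) \times \M$, and hence a.e.\ on $[-T_0, T_0] \times \M$. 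Weak lower semi-continuity of the norm then gives $\|v\|_{X^{s,b}} \le \liminf_k \|v_{n_k}\|_{X^{s,b}} \le g(T_0^-)$, so $g(T_0) \le \|v\|_{X^{s,b}} \le g(T_0^-)$.

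For right-continuity, given $\eps > 0$, fix $v_0 \in X^{s,b}$ with $v_0|_{[-T_0, T_0]} = w|_{[-T_0, T_0]}$ and $\|v_0\|_{X^{s,b}} \le g(T_0) + \eps$. For $T > T_0$, define the sharp-cutoff extension
\[
v_T := \mathbf{1}_{[-T, T]}(t) \, w + \mathbf{1}_{[-T, T]^c}(t) \, v_0,
\]
which agrees with $w$ on $[-T, T]$. Since $w = v_0$ on $[-T_0, T_0]$, one has
\[
v_T - v_0 \;=\; \mathbf{1}_{[-T, T] \setminus [-T_0, T_0]}(t) \, (w - v_0),
\]
supported on a $t$-set of measure $2(T - T_0)$. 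By the time-cutoff bound of Lemma~\ref{LEM:time} (available for $|b| < 1/2$) combined with a density argument---approximate $w - v_0$ by Schwartz data, for which a direct estimate (or dominated convergence on the Fourier side) yields decay, and then pass to the general case via uniform boundedness of the cutoff operators---we deduce $\|v_T - v_0\|_{X^{s,b}} \to 0$ as $T \to T_0^+$. Therefore $g(T) \le \|v_T\|_{X^{s,b}} \le g(T_0) + \eps + o(1)$, and sending $T \to T_0^+$ then $\eps \to 0$ gives $g(T_0^+) \le g(T_0)$.

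The principal obstacle is the strong-operator convergence of the sharp cutoffs $u \mapsto \mathbf{1}_{I_T} u$ on $X^{s,b}$ as $|I_T| \to 0$, which depends crucially on Lemma~\ref{LEM:time}. That lemma's assumption $|b| < 1/2$ is where care is needed: for larger $b$ (as encountered in practice with $b = \tfrac{1}{2}+$), one must first trade the extra modulation weight via a duality or time-derivative identity to reduce to the regime where Lemma~\ref{LEM:time} directly applies.
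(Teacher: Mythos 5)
Your proof is structured quite differently from the paper's: you split continuity into left- and right-continuity via monotonicity, prove the former by weak compactness plus weak lower semicontinuity of the norm, and the latter by a sharp time-cutoff modification of a near-optimal extension. The left-continuity half is correct and is a clean, standard argument valid for every $b \in \R$ (the one point to address is that the weak limit agrees with $w$ a priori only on the open interval $(-T_0,T_0)$; for $b>\frac12$ this upgrades to the closed interval via $X^{s,b}\hookrightarrow C_tH^s_x$, and for smaller $b$ one should note that no nonzero element of $X^{s,b}$ with $b>-\frac12$ is supported on $\{t=\pm T_0\}$). The paper avoids the left/right split altogether: it first truncates to frequencies $|n|\le K$ (the tail being small in $X^{s,b}_T$ uniformly in $T$), then compares $\|w\|_{X^{s,b}_{T}}$ with $\|w\|_{X^{s,b}_{T_0}}$ through the time dilation $D(r)$, $r=T/T_0$, using that on bounded frequencies the weights $\jb{r\tau+|n|^\al}^b$ and $\jb{\tau+|n|^\al}^b$ are uniformly comparable; this treats both one-sided limits at once and works for arbitrary $b$.

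The genuine gap is in your right-continuity step. The function $v_T=\ind_{[-T,T]}w+\ind_{[-T,T]^c}v_0$ has jump discontinuities in time at $t=\pm T$ (where $w$ and $v_0$ need not agree), so for $b>\frac12$ it does not even belong to $X^{s,b}$, and $\ind_{[-T,T]\setminus[-T_0,T_0]}(w-v_0)$ does not tend to $0$ in $X^{s,b}$. Lemma \ref{LEM:time} is restricted to $|b'|<\frac12$ for exactly this reason, and Lemma \ref{LEM:Xsb_cty} is invoked in the paper with $b=\frac12+\eps$, so the case you defer is the one that matters. The repair you gesture at (``trade the modulation weight via a duality or time-derivative identity'') does not obviously close it: writing $\jb{\tau+|n|^\al}^{b}=\jb{\tau+|n|^\al}^{b-1}\jb{\tau+|n|^\al}$ converts the estimate into one for $(i\dt-D^\al)(\ind_I u)$ in $X^{s,b-1}$, and the time derivative of the sharp cutoff produces Dirac masses at $t=\pm T$ weighted by the traces of $w-v_0$ there, which are nonzero in general and lie outside $X^{s,b-1}$ when $b-1>-\frac12$. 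As written, your argument therefore proves the lemma only for $|b|<\frac12$; to cover $b\ge\frac12$ you need a different construction of the extensions $v_T$, e.g.\ the paper's dilation of a near-optimal extension.
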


\begin{proof}
The proof is similar to \cite[Lemma 8.1]{GO}, but we choose to present full details here.

Let us fix $T_0 > 0$ and show continuity of $\| w \|_{X_T^{s, b}}$ at $T_0$. Note that for all $T > 0$, by the dominated convergence theorem, we have
\begin{align*}
\| \pi_{> K} w \|_{X_T^{s, b}} \leq \| \pi_{> K} w \|_{X^{s, b}} \longrightarrow 0
\end{align*}

\noi
as $K \to \infty$. Thus, we only need to show continuity of $\| \pi_{\leq K} w \|_{X_T^{s, b}}$ at $T_0$ for some fixed large $K \in \N$. Below, we write $w$ for $\pi_{\leq K} w$ for simplicity.

For $\ld > 0$, we define
\begin{align*}
D(\ld) (w) (t, x) := w (\ld^{-1} t, x).
\end{align*}

\noi
Let $r = \frac{T}{T_0}$ be close to 1. Note that if $|n| \leq K$, we have
\begin{align}
\jb{r^{-1} \tau + |n|^\al}^b \les \jb{r^{-1} (\tau + |n|^\al)}^b \jb{(r^{-1} - 1) |n|^\al}^{|b|} \les K^{\al |b|} \jb{\tau + |n|^\al}^b.
\label{domi_bdd}
\end{align}

\noi
Thus, by the dominated convergence theorem, we have
\begin{align}
\big| \| D(r) (w) \|_{X_T^{s, b}} - \| w \|_{X_T^{s, b}} \big| \les \| D(r) (w) - w \|_{X_T^{s, b}} \longrightarrow 0
\label{liminf0}
\end{align}

\noi
as $r \to 1$. Hence, it suffices to show 
\begin{align*}
\lim_{r \to 1} \| D(r) (w) \|_{X_{r T_0}^{s, b}} = \| w \|_{X_{T_0}^{s, b}},
\end{align*}

\noi
which will follow from
\begin{align}
\liminf_{r \to 1} \| D(r) (w) \|_{X_{r T_0}^{s, b}} \geq \| w \|_{X_{T_0}^{s, b}},  \label{liminf} \\
\limsup_{r \to 1} \| D(r) (w) \|_{X_{r T_0}^{s, b}} \leq \| w \|_{X_{T_0}^{s, b}}.  \label{limsup}
\end{align}

We only prove \eqref{liminf}, and the proof for \eqref{limsup} will follow from a similar manner. For any $\eps > 0$, we let $w_r$ be an extension of $D(r) (w)$ on $[-rT_0, rT_0]$ such that
\begin{align}
\| w_r \|_{X^{s, b}}^2 \leq \| D (r) (w) \|_{X_{r T_0}^{s, b}}^2 + \eps.
\label{liminf1}
\end{align}

\noi
Note that $D(r^{-1}) (w_r)$ coincides with $w$ on $[-T_0, T_0]$, so that
\begin{align}
\| w \|_{X_{T_0}^{s, b}}^2 \leq \| D (r^{-1}) (w_r) \|_{X^{s, b}}^2.
\label{liminf2}
\end{align}

\noi
Also, we have
\begin{align*}
\| D (r^{-1}) (w_r) \|_{X^{s, b}}^2 &= \frac{1}{r} \int_\R \sum_{n \in \Z} \jb{r \tau + |n|^\al}^{2b} \jb{n}^{2s} |w_r (\tau, n)|^2 d\tau \\
&= \frac{1}{r} \| w_r \|_{X^{s, b}}^2 + \frac{1}{r} \int_\R \sum_{n \in \Z} \big( \jb{r \tau + |n|^\al}^{2b} - \jb{\tau + |n|^\al}^{2b} \big) \jb{n}^{2s} |w_r (\tau, n)|^2 d\tau.
\end{align*}

\noi
Thus, by \eqref{domi_bdd}, \eqref{liminf1}, and \eqref{liminf0}, we can use the dominated convergence theorem to obtain
\begin{align}
\| D (r^{-1}) (w_r) \|_{X^{s, b}}^2 \leq \frac{1}{r} \| D(r) (w) \|_{X_{rT_0}^{s, b}}^2 + \frac{2\eps}{r}.
\label{liminf3}
\end{align}

\noi
Combining \eqref{liminf2} and \eqref{liminf3}, we obtain the desired estimate \eqref{liminf}.

\end{proof}

\section{Well-posedness on the real line}
\label{SEC:WPreal}

In this section, we study low regularity well-posedness of the cubic fNLS \eqref{fnls1} on the real line (i.e.~$ \M = \R $).

\subsection{Strichartz estimates on the real line}
In this subsection, we show some Strichartz estimates on $\R$ that will be useful for proving our well-posedness result on $\R$.
We recall that $S(t) = e^{- i t D^\al}$.

We first consider the following linear Strichartz estimate.
\begin{lemma}
\label{LEM:str}
Let $\al > 2$. Let $0 \leq \g \leq \frac{\al}{2} - 1$, $r \geq 2$, and $q \geq \frac{2 \al}{1 + \g}$ be such that
\begin{align*}
\frac{\al}{q} + \frac{1 + \g}{r} = \frac{1 + \g}{2}.
\end{align*}

\noi
Then, we have
\begin{align*}
\big\| D^{\frac{\g}{2} (1 - \frac{2}{r})} S(t) f \big\|_{L_t^q L_x^r (\R \times \R)} \les \| f \|_{L^2 (\R)}.
\end{align*}
\end{lemma}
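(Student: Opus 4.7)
The plan is to reduce the estimate to a unit-frequency Strichartz bound via Littlewood--Paley decomposition and dyadic rescaling, and then to establish that unit-frequency bound via stationary phase and a $TT^*$ argument. Let $P_N$ denote a Littlewood--Paley projector onto frequencies $|\xi|\sim N$, $N\geq 1$ dyadic. Since $q,r\geq 2$, the square function theorem together with Minkowski's inequality reduces the claim to the frequency-localized bound
\[
\|S(t)P_Nf\|_{L^q_tL^r_x}\les N^{-\frac{\g}{2}(1-2/r)}\|P_Nf\|_{L^2_x}, \qquad N\geq 1.
\]
Applying the dyadic rescaling $v(t,x)=u(N^{-\al}t,N^{-1}x)$ (which maps a frequency-$N$ solution $u=S(t)P_Nf$ to a frequency-one solution $v$ with $\|v(0)\|_{L^2}=N^{1/2}\|P_Nf\|_{L^2}$) and tracking scaling exponents, one computes $\|u\|_{L^q_tL^r_x}=N^{-1/r-\al/q}\|v\|_{L^q_tL^r_x}$, so that the admissibility relation $\frac{\al}{q}+\frac{1+\g}{r}=\frac{1+\g}{2}$ is precisely the identity $\frac12-\frac1r-\frac{\al}{q}=-\frac{\g}{2}(1-2/r)$, which makes both sides scale the same way. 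Thus the problem reduces to the unit-frequency Strichartz estimate
\[
\|S(t)P_1g\|_{L^q_tL^r_x}\les\|P_1g\|_{L^2_x}.
\]

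To prove this unit-frequency estimate, I would first derive a dispersive bound via van der Corput applied to the kernel $K_1(t,x)=\int e^{-it|\xi|^\al+ix\xi}\chi_1(\xi)\,d\xi$: the phase $\phi(\xi)=-t|\xi|^\al+x\xi$ satisfies $|\phi''(\xi)|=\al(\al-1)|t||\xi|^{\al-2}\sim|t|$ on $|\xi|\sim 1$, with the hypothesis $\al>2$ ensuring non-degeneracy. This yields $\|S(t)P_1g\|_{L^\infty_x}\les|t|^{-1/2}\|P_1g\|_{L^1_x}$. Interpolating with the energy identity and applying the standard $TT^*$ argument combined with Hardy--Littlewood--Sobolev in time produces the sharp endpoint Strichartz estimate $\|S(t)P_1g\|_{L^4_tL^\infty_x}\les\|P_1g\|_{L^2_x}$. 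Interpolating this first in the $q$-variable against $\|S(t)P_1g\|_{L^\infty_tL^\infty_x}\les\|P_1g\|_{L^2_x}$ (valid by Bernstein plus energy at unit frequency), and then in the $r$-variable via the pointwise-in-time bound $\|P_1g\|_{L^r_x}\les\|P_1g\|_{L^2_x}^{2/r}\|P_1g\|_{L^\infty_x}^{1-2/r}$, extends the unit-frequency estimate to the full admissible region $q\geq 4$, $r\geq 2$, $\frac{2}{q}+\frac{1}{r}\leq\frac12$.

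The remaining step is a direct verification that the hypothesis $0\leq\g\leq\frac{\al}{2}-1$, combined with the admissibility relation, places $(q,r)$ exactly inside the triangle on which the unit-frequency Strichartz estimate above is available. Indeed, at $\g=\frac{\al}{2}-1$ one recovers the sharp line $\frac{2}{q}+\frac{1}{r}=\frac12$, and decreasing $\g$ toward $0$ sweeps out a one-parameter family of strictly subcritical Strichartz lines terminating at the trivial energy estimate $(q,r)=(\infty,2)$; the condition $q\geq\frac{2\al}{1+\g}$ translates, via the admissibility relation, into $q\geq 4$. The main technical point in the argument is the bookkeeping of scaling and interpolation exponents, together with the stationary phase estimate for $K_1$; near the transition $|t|\sim 1$ one replaces the decay $|t|^{-1/2}$ by the trivial bound $\|K_1(t)\|_{L^\infty_x}\les 1$, which is harmless for the subsequent $L^q_t$ integration.
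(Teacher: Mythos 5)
Your reduction to a unit-frequency estimate via dyadic rescaling is sound: the identity $\tfrac12-\tfrac1r-\tfrac{\alpha}{q}=-\tfrac{\gamma}{2}(1-\tfrac2r)$ does follow from the admissibility relation, the rescaled profile solves the same equation because $|\xi|^\alpha$ is homogeneous, and the hypothesis $\gamma\le\tfrac{\alpha}{2}-1$ is exactly what places $(q,r)$ in the closed triangle $\tfrac2q+\tfrac1r\le\tfrac12$, $r\ge2$, where the unit-frequency bound holds (van der Corput on $|\xi|\sim1$, $TT^*$ with Hardy--Littlewood--Sobolev on the sharp line, Young's inequality with $\langle t\rangle^{-\beta}$ strictly below it). This is genuinely different from the paper's route, which proves the global weighted dispersive estimate $\|D^\gamma S(t)f\|_{L^\infty}\lesssim|t|^{-(1+\gamma)/\alpha}\|f\|_{L^1}$ (following \cite[Lemma 2.1]{kihoon}) and then runs $TT^*$ once on the full, non-localized evolution.

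The gap is in your very first step, the passage back from the frequency-localized blocks to $f$ itself. The square-function theorem you invoke requires $r<\infty$, but the lemma allows $r=\infty$ (the relation forces $q=\tfrac{2\alpha}{1+\gamma}$ there), and $r=\infty$ is precisely the case the paper uses: both nontrivial estimates in Remark \ref{RMK:str}, namely $\|D^{\frac{\alpha-2}{4}}S(t)f\|_{L^4_tL^\infty_x}\lesssim\|f\|_{L^2}$ and $\|S(t)f\|_{L^{2\alpha}_tL^\infty_x}\lesssim\|f\|_{L^2}$, have $r=\infty$. For $r=\infty$ your per-block bound has exponents that cancel exactly, so the triangle inequality over $N$ only yields $\sum_N\|P_Nf\|_{L^2}$, which is an $\ell^1$ sum and is not controlled by $\|f\|_{L^2}=(\sum_N\|P_Nf\|_{L^2}^2)^{1/2}$; there is no room to Cauchy--Schwarz in $N$ without losing a derivative, and the statement is sharp in $\gamma$. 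The summation over frequencies must instead be performed at the level of the kernel, before $TT^*$: summing your dyadic van der Corput bounds $|K_N(t,x)|\lesssim N^\gamma\min\{N,(|t|N^{\alpha-2})^{-1/2}\}$ over $N$ reproduces the paper's dispersive estimate $|t|^{-(1+\gamma)/\alpha}$ (with a logarithmic issue at the endpoint $\gamma=\tfrac{\alpha}{2}-1$ that requires a direct stationary-phase argument as in \cite{kihoon}), after which a single $TT^*$ application gives all admissible $(q,r)$ including $r=\infty$. As written, your proof establishes the lemma only for $2\le r<\infty$.
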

\begin{proof}
By following the proof of \cite[Lemma 2.1]{kihoon}, we obtain the following dispersive estimate:
\begin{align}
\| D^\g S(t) f \|_{L^\infty (\R)} \les |t|^{- \frac{1 + \g}{\al}} \| f \|_{L^1 (\R)}.
\label{disp}
\end{align}

\noi
Then, by using the $TT^*$ argument along with \eqref{disp} as in the proof of \cite[Lemma 2.5]{kihoon}, we obtain the desired Strichartz estimate.
\end{proof}

We will also need the following bilinear Strichartz estimate in a high-low interaction analysis.
\begin{lemma}
\label{LEM:blin}
Let $\al>2$ and $N_1,N_2 \geq 1$ to be dyadic numbers satisfying $N_1\leq \frac{N_2}{8}$. Then, we have
\begin{align*}
\|   S(t)  P_{N_1} \phi S(t) P_{N_2} \phi \|_{L_{t,x}^2 ( \mathbb{R}\times \mathbb{R} )} \les N_2^{\frac{1-\al}{2}} \| P_{N_1} \phi \|_{L^2 ( \mathbb{R} )} \| P_{N_2} \phi \|_{L^2 ( \mathbb{R} )}.
\end{align*}
\end{lemma}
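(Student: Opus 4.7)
The plan is to carry out a standard resonance analysis at the level of the space-time Fourier transform. By Plancherel, $\|S(t)P_{N_1}\phi \cdot S(t)P_{N_2}\phi\|_{L^2_{t,x}}^2$ equals $\|\mathcal{F}_{t,x}(S(t)P_{N_1}\phi\cdot S(t)P_{N_2}\phi)\|_{L^2_{\tau,\xi}}^2$. Since $\mathcal{F}_x[S(t)P_{N_j}\phi](\xi_j)=e^{-it|\xi_j|^\alpha}\widehat{P_{N_j}\phi}(\xi_j)$, computing the $t$-Fourier transform of the product gives, up to constants,
\begin{align*}
\mathcal{F}_{t,x}\bigl(S(t)P_{N_1}\phi\cdot S(t)P_{N_2}\phi\bigr)(\tau,\xi)=\int \delta\bigl(\tau+|\xi_1|^\alpha+|\xi-\xi_1|^\alpha\bigr)\,\widehat{P_{N_1}\phi}(\xi_1)\widehat{P_{N_2}\phi}(\xi-\xi_1)\,d\xi_1,
\end{align*}
where the $\xi_1$-integration is restricted to $|\xi_1|\sim N_1$ and $|\xi-\xi_1|\sim N_2$.

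The key geometric input will be the lower bound on the derivative of the phase $g(\xi_1):=|\xi_1|^\alpha+|\xi-\xi_1|^\alpha$. Its derivative is $g'(\xi_1)=\alpha\,\mathrm{sgn}(\xi_1)|\xi_1|^{\alpha-1}-\alpha\,\mathrm{sgn}(\xi-\xi_1)|\xi-\xi_1|^{\alpha-1}$. Under the frequency localization $N_1\leq N_2/8$, the second term strictly dominates, so $|g'(\xi_1)|\sim N_2^{\alpha-1}$, and $g$ is strictly monotone on each connected component of the $\xi_1$-support. Consequently, for each fixed $(\tau,\xi)$ there are at most $O(1)$ solutions $\xi_1^{*}=\xi_1^{*}(\tau,\xi)$, and resolving the delta function yields
\begin{align*}
\bigl|\mathcal{F}_{t,x}\bigl(S(t)P_{N_1}\phi\cdot S(t)P_{N_2}\phi\bigr)(\tau,\xi)\bigr|^2\lesssim N_2^{-2(\alpha-1)}\sum_{\xi_1^{*}}\bigl|\widehat{P_{N_1}\phi}(\xi_1^{*})\bigr|^2\bigl|\widehat{P_{N_2}\phi}(\xi-\xi_1^{*})\bigr|^2.
\end{align*}

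To integrate in $\tau$ for fixed $\xi$, I would perform the change of variables $\tau\mapsto \xi_1^{*}$, which has Jacobian $|d\tau/d\xi_1^{*}|=|g'(\xi_1^{*})|\sim N_2^{\alpha-1}$, producing an extra factor of $N_2^{\alpha-1}$. Combining this with the $N_2^{-2(\alpha-1)}$ above and applying Fubini in $\xi,\xi_1$ gives
\begin{align*}
\|S(t)P_{N_1}\phi\cdot S(t)P_{N_2}\phi\|_{L^2_{t,x}}^2\lesssim N_2^{-(\alpha-1)}\|P_{N_1}\phi\|_{L^2}^2\|P_{N_2}\phi\|_{L^2}^2,
\end{align*}
which is the desired estimate after taking square roots. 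The main obstacle is the phase-derivative analysis: one must rule out cancellation between $|\xi_1|^{\alpha-1}$ and $|\xi-\xi_1|^{\alpha-1}$, and the hypothesis $N_1\leq N_2/8$ is precisely what makes the second term dominate regardless of the relative signs of $\xi_1$ and $\xi-\xi_1$. Everything else is bookkeeping with Plancherel and a one-variable change of variables.
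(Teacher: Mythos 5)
Your argument is correct, and it is essentially the standard proof the paper invokes by citing \cite[Lemma 2.7]{kihoon}: pass to the space-time Fourier transform, resolve the delta function using the lower bound $|g'(\xi_1)|\gtrsim N_2^{\alpha-1}$ forced by $N_1\leq N_2/8$, and undo the $\tau$-integration by the change of variables $\tau\mapsto\xi_1^*$. No substantive differences from the paper's (referenced) proof.
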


\begin{proof}
The proof of the bilinear Strichartz estimate follows in the same way as \cite[Lemma~2.7]{kihoon}.
\end{proof}

\begin{remark} \rm 
\label{RMK:str}
By Lemma \ref{LEM:str}, we have the following estimates:
\begin{align*}
\big\| D^{\frac{\al-2}{4}} S(t) f \big\|_{L^4_t L^\infty_x(\R\times \R)}
\les
\|f\|_{L^2(\R)}, \\
\| S(t) f \|_{L^{2\al}_t L^\infty_x(\R\times \R)}
\les
\|f\|_{L^2(\R)}, \\
\| S(t) f \|_{L^\infty_t L^2_x (\R \times \R)} \les \| f \|_{L^2 (\R)}.
\end{align*}

\noi
By using the above three estimates and the transference principle as in \cite[Lemma 2.9]{Tao}, we have that for $b > \frac 12$, 
\begin{align}
\| D^{\frac{\al-2}{4}} u \|_{L^4_t L^\infty_x(\R\times \R)}
\les
\| u \|_{X^{0,b}},
\label{str1}\\
\| u \|_{L^{2\al}_t L^\infty_x(\R\times \R)}
\les
\| u \|_{X^{0,b}},
\notag \\
\| u \|_{L^\infty_t L^2_x (\R \times \R)} \les \| u \|_{X^{0, b}}. \notag
\end{align}

Also, by Lemma \ref{LEM:blin} and the transference principle as in \cite[Lemma 2.9]{Tao}, we have
\begin{align}
\| P_{N_1} u_1 P_{N_2} u_2 \|_{L_{t, x}^2 (\R \times \R)} \les N_2^{\frac{1 - \al}{2}} \| P_{N_1} u_1 \|_{X^{0, b}} \| P_{N_2} u_2 \|_{X^{0, b}}.
\label{bistr}
\end{align}
\end{remark}

\subsection{Local well-posedness on the real line}
\label{SEC:LWP1}

In this subsection, we prove Theorem \ref{TM:LWP1}, local well-posedness of the cubic fNLS \eqref{fnls1} on $\R$. 
In order to show local well-posedness, we need the following crucial trilinear estimate:

\begin{proposition}
\label{PROP:triR}
 Let $\al>2$, $ \frac{2-\al}{4} \leq s<0$, and $T > 0$. Then, for $ 0 < \e \ll 1 $ small enough, we have
\begin{align*}
\| u_1 \overline{u_2} u_3 \|_{ X_T^{s, - \frac 12 + 2\eps }} \les \prod_{j=1}^3 \| u_j \|_{ X_T^{s,\frac 12 + \eps }}.
\end{align*}
\end{proposition}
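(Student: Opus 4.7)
The plan is to globalize in time, dualize to a $4$-linear form, and then resolve the form by a Littlewood--Paley plus modulation decomposition driven by Lemma \ref{LEM:factor}. After choosing extensions of $u_j$ from $[-T,T]$ to $\R\times\R$ with comparable $X^{s,\frac12+\eps}$-norms, duality reduces the claim to
\begin{align*}
\bigg|\iint_{\R\times\R} u_1\bar u_2 u_3 \bar u_4\,dx\,dt\bigg|\les \prod_{j=1}^3 \|u_j\|_{X^{s,\frac12+\eps}}\cdot\|u_4\|_{X^{-s,\frac12-2\eps}}.
\end{align*}
On the Fourier side this is a weighted integral over $\{\xi_1-\xi_2+\xi_3-\xi_4=0,\ \tau_1-\tau_2+\tau_3-\tau_4=0\}$. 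Setting $\sigma_j := \tau_j + |\xi_j|^\al$ and $L_j := \jb{\sigma_j}$, one has $\sigma_1-\sigma_2+\sigma_3-\sigma_4 = |\xi_1|^\al-|\xi_2|^\al+|\xi_3|^\al-|\xi_4|^\al$, and Lemma \ref{LEM:factor} (applied after the sign flip $\xi_{2,4}\mapsto -\xi_{2,4}$ so the signs in the resonance function match) gives the decisive modulation lower bound
\begin{align*}
\max_j L_j \,\ges\, |\xi_1 - \xi_2|\cdot|\xi_2 - \xi_3|\cdot N_{\max}^{\al-2},\qquad N_{\max}:= \max_j|\xi_j|.
\end{align*}

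I would then dyadically decompose $|\xi_j|\sim N_j$ with $L_j$-dyadic, and exploit the symmetries $u_1\leftrightarrow u_3$ and $u_2\leftrightarrow u_4$ to reduce to a small list of frequency orderings. The momentum constraint forces the two largest $N_j$'s to be comparable, so essentially two regimes remain: a \emph{high-low} regime where exactly two $N_j$'s are $\sim N_{\max}$ and the other two are much smaller, and a \emph{fully resonant} regime $N_1\sim N_2\sim N_3\sim N_4\sim N$. In the high-low regime, I pair one high-frequency factor with a low-frequency factor by Cauchy-Schwarz and invoke the bilinear Strichartz estimate \eqref{bistr}, gaining the crucial factor $N_{\max}^{(1-\al)/2}$; the remaining pair is controlled by the Strichartz embeddings of Remark \ref{RMK:str}. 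Balancing the Sobolev weights $N_1^{-s}N_2^{-s}N_3^{-s}N_4^{\,s} \les N_{\max}^{-4s}$ against the gain $N_{\max}^{(1-\al)/2}$ closes the estimate precisely at the threshold $s\ge\frac{2-\al}{4}$.

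The main obstacle is the fully resonant regime at the endpoint $s=\frac{2-\al}{4}$, where bilinear Strichartz alone is insufficient and the $\al-2$ missing derivatives must be extracted from the modulation bound above. I would subdivide by which $L_j$ is largest: when $L_{\max}\in\{L_1,L_2,L_3\}$ the available modulation weight is $L_{\max}^{-(\frac12+\eps)}$, whereas if $L_{\max}=L_4$ only $L_{\max}^{-(\frac12-2\eps)}$ is available. In each subcase, Cauchy-Schwarz in $(t,x)$ on the two factors carrying $L_{\max}$, combined with the $D^{(\al-2)/4}$-smoothing $L^4_tL^\infty_x$-Strichartz estimate \eqref{str1} applied to the other pair and the lower bound on $L_{\max}$, yields a dyadic estimate that sums over the modulation shells and the near-diagonal variables $|\xi_1-\xi_2|$ and $|\xi_2-\xi_3|$. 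The summation margin comes only from the $\eps$ room in the $b$-indices, and the subcase $L_{\max}=L_4$ is the tightest since it carries the smallest power of $L_{\max}$ with which to absorb the resonance; this is where the endpoint regularity condition is saturated and where the bulk of the careful bookkeeping will lie.
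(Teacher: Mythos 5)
Your proposal follows essentially the same route as the paper's proof, which dualizes, invokes Lemma \ref{LEM:factor} for the resonance lower bound $\max_j L_j \ges |\xi_1-\xi_2||\xi_2-\xi_3||\xi_{\text{max}}|^{\al-2}$, and then runs the case-by-case analysis of \cite[Proposition 3.5]{kihoon} with the bilinear estimate \eqref{bistr} and the smoothing Strichartz estimate \eqref{str1}; your identification of the high-high-high-high resonant interaction as the case that saturates $s=\frac{2-\al}{4}$ is exactly what Remark \ref{RM:cteg} records. One quantitative claim in your high-low paragraph should be repaired: bounding the weight crudely by $N_{\max}^{-4s}$ against a \emph{single} bilinear gain $N_{\max}^{(1-\al)/2}$ requires $-4s\leq \frac{\al-1}{2}$, which at $s=\frac{2-\al}{4}$ amounts to $\al\leq 3$, so the balance as written does not close for $\al>3$ (and a fortiori does not "close precisely at the threshold"). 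The case is in fact comfortably subcritical once you either count the weights honestly (in the high-low regime at most two of the four frequencies sit at $N_{\max}$, and the $\jb{\xi_4}^{s}$ factor helps rather than hurts) or pair each high factor with a low factor and apply the bilinear Strichartz estimate twice, gaining $N_{\max}^{1-\al}$; with either fix the high-low regime closes for all $s\geq \frac{1-\al}{2}$ and the only genuinely tight case is the resonant one, as you say in your final paragraph. You should also note that the dual function $u_4$ carries only $b=\frac12-2\eps<\frac12$, so the transference-based estimates must be applied to it via interpolation or by always assigning the largest modulation elsewhere, a standard point that the bookkeeping has to respect.
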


Let us first assume the trilinear estimate in Proposition \ref{PROP:triR} and prove Theorem \ref{TM:LWP1}. For this purpose, we write \eqref{fnls1} in the following Duhamel formulation:
\begin{align}
u(t) = S(t)u_0 \mp i\int_0^t S(t-t') (|u|^2 u)(t') dt',
\label{Duh}
\end{align}

\noi
where $S(t)=e^{-it D^\al}$.

\begin{proof}[Proof of Theorem \ref{TM:LWP1}]
We define the map
\begin{align*}
\G [u] (t) := S(t) u_0 \mp i \int_0^t S (t - t') (|u|^2 u) (t') dt'.
\end{align*}

\noi
Let $T > 0$ and $b = \frac 12 + \eps$ for $\eps > 0$. By Lemma \ref{LEM:lin} and Proposition \ref{PROP:triR}, we have
\begin{align*}
\| \G [u] \|_{X_T^{s, b}} \les \| u_0 \|_{H^s} + T^\eps \| u \|_{X_T^{s, b}}^3
\end{align*}

\noi
for $\eps > 0$ sufficiently small. Using similar steps, we also have the following difference estimate:
\begin{align*}
\| \G [u_1] - \G [u_2] \|_{X_T^{s, b}} \les T^\eps \Big( \| u_1 \|_{X_T^{s, b}}^2 + \| u_2 \|_{X_T^{s, b}}^2 \Big) \| u_1 - u_2 \|_{X_T^{s, b}}. 
\end{align*}

\noi
Thus, by choosing $T > 0$ sufficiently small, we obtain that $\G$ is a contraction on a ball in the $X_T^{s, b}$-space with radius $\sim \| u_0 \|_{H^s}$. This implies local well-posedness of \eqref{Duh} and thus finishes the proof of Theorem \ref{TM:LWP1}.
\end{proof}

We now show the proof of the trilinear estimate in Proposition \ref{PROP:triR}.

\begin{proof}[Proof of Proposition \ref{PROP:triR}]
The proof follows similarly as \cite[Proposition 3.5]{kihoon}, and so we will be brief in the proof below and point out the main differences in our case.

By working with extensions of $u_1$, $u_2$, and $u_3$ on the whole time line, we can ignore the subscript $T$ for the $X^{s, b}$-norms. By duality and Plancherel's theorem, we have
\begin{align*}
\| &u_1 \cj{u_2} u_3 \|_{X^{s, -\frac 12 + 2 \eps}} \\ 
&= \sup_{\| v \|_{X^{s, \frac 12 - 2 \eps}} \leq 1} \bigg| \int_{\R \times \R} u_1 \cj{u_2} u_3 \cj{v} dx dt \bigg| \\
&= \sup_{\| v \|_{X^{s, \frac 12 - 2 \eps}} \leq 1} \bigg| \int_{\substack{\tau_1 + \tau_2 + \tau_3 + \tau_4 = 0 \\ \xi_1 + \xi_2 + \xi_3 + \xi_4 = 0}} \ft{u_1} (\tau_1, \xi_1) \ft{\cj{u_2}} (\tau_2, \xi_2) \ft{u_3} (\tau_3, \xi_3) \ft{\cj{v}} (\tau_4, \xi_4) d \tau_1 d \tau_2 d \tau_3 d \xi_1 d \xi_2 d \xi_3 \bigg|
\end{align*}
By defining
\begin{align*}
f_1 (\tau_1, \xi_1) &= \jb{\xi_1}^s \jb{\tau_1 + |\xi_1|^\al}^{\frac 12 + \eps} |\ft{u_1} (\tau_1, \xi_1)|, \\
f_2 (\tau_2, \xi_2) &= \jb{\xi_2}^s \jb{\tau_2 - |\xi_2|^\al}^{\frac 12 + \eps} |\ft{u_2} (\tau_2, \xi_2)|, \\
f_3 (\tau_3, \xi_3) &= \jb{\xi_3}^s \jb{\tau_3 + |\xi_3|^\al}^{\frac 12 + \eps} |\ft{u_3} (\tau_3, \xi_3)|, \\
f_4 (\tau_4, \xi_4) &= \jb{\xi_4}^{-s} \jb{\tau_4 - |\xi_4|^\al}^{\frac 12 - 2\eps} |\ft{v} (\tau_4, \xi_4)|,
\end{align*}

\noi
we only need to show the following estimate
\begin{align}
\int_{\substack{\tau_1 + \tau_2 + \tau_3 + \tau_4 = 0 \\ \xi_1 + \xi_2 + \xi_3 + \xi_4 = 0}} m_1 (\cj{\tau}, \cj{\xi}) \prod_{j=1}^4 f_j(\tau_j, \xi_j) d \tau_1 d \tau_2 d \tau_3 d \xi_1 d \xi_2 d \xi_3 \les \prod_{j=1}^4 \| f_j \|_{L^2_{\tau_j, \xi_j}},
\label{tri2}
\end{align}

\noi
where the multiplier $m_1 (\cj{\tau}, \cj{\xi})$ is defined by
\begin{align*}
m_1 ( \cj{\tau}, \cj{\xi} ) &\hphantom{:}= m_1 (\tau_1,\dots,\tau_4, \xi_1,\dots,\xi_4) \\
&:= \frac{ \langle \t_4 - | \xi_4 | ^{\al} \rangle ^{-	 \frac 12 + 2 \e} \langle \xi_1 \rangle^{-s}  \langle  \xi_ 2 \rangle^{-s}  \langle  \xi _3\rangle^{-s}       }{  \langle \t_1 + | \xi_1 | ^{\al} \rangle^{\frac 12 + \e}     \langle \t_2 - | \xi_2 | ^{\al} \rangle   ^{\frac 12 + \e} \langle \t_3 + | \xi_3 | ^{\al} \rangle ^{\frac 12 + \e}    \langle  \xi_4 \rangle^{-s}   }, 
\end{align*}

\noi
To prove \eqref{tri2}, we perform the same case-by-case analysis as in \cite[Proposition 3.5]{kihoon}. The main difference in our case is that, instead of using the fractorization \cite[(3.16)]{kihoon},
we use Lemma \ref{LEM:factor} to obtain
\begin{align*}
\max_{j = 1, 2, 3, 4} \big\{ |\tau_j + |\xi_j|^\al| \big\}\ges |\xi_1+\xi_2 | | \xi_2+\xi_3 |   | \xi_{\text{max}} |^{\al-2}
\end{align*}

\noi
under the conditions $\tau_1 + \tau_2 + \tau_3 + \tau_4 = 0$ and $\xi_1 + \xi_2 + \xi_3 + \xi_4 = 0$, where $\xi_{\text{max}} = \max \{ |\xi_1|, |\xi_2|, |\xi_3|, |\xi_4| \}$.
Furthermore, the uses of Strichartz estimates in \cite[Proposition 3.5]{kihoon} need to be replaced by our Strichartz estimates in Remark \ref{RMK:str}. The rest of the argument is similar and so we omit details.
\end{proof}

\begin{remark} \rm
\label{RM:cteg}
We remark that Proposition \ref{PROP:triR} fails when $s<\frac{2-\al}{4}$. This failure is due to the resonant interaction of high-high-high into
high frequencies.
A counterexample similar to that in \cite[Theorem 1.7]{KPV} can be constructed. 
In fact, the same construction in \cite[Section 4]{CHKL} also works in the $\al > 2$ setting.
\end{remark}

\subsection{Global well-posedness on the real line}
\label{SEC:Imd}
In this subsection, we prove Theorem \ref{TM:GWP1}, global well-posedness of the cubic fNLS \eqref{fnls1} on $\R$. As mentioned in Subsection \ref{SUBSEC:fNLS_R}, our goal is to bound the $H^s$-norm of the solution $u$ via the $I$-method, where $\frac{2 - \al}{4} \leq s < 0$.
Specifically, let us consider a smooth even monotone function $m : \R \to [0, 1]$ such that
\begin{align}
m(\xi):= 
\begin{cases}
1   & |\xi|<N \\ 
\big( \frac{|\xi|}{N} \big)^s  & |\xi|>2N.
\end{cases}
\label{Iop1}
\end{align}

\noi
Here, $N$ is a large parameter to be determined later. 
We define the Fourier multiplier
$I = I_N : H^s (\R) \to L^2 (\R)$
such that $\ft{Iu}(\xi) = m(\xi)\ft{u}(\xi)$.
The operator $I$ acts as an identity operator on low frequencies and as a smoothing operator on high frequencies.
Note that we have the following bounds:
\begin{equation}
\label{i-smoothing}
\| u\|_{H^s} \leq \| Iu \|_{L^2}\lesssim N^{-s} \| u\|_{H^s}.   
\end{equation}

We define our modified mass by
$$  M(Iu)=\|Iu\|^2_{L^2}.$$
Note that by \eqref{i-smoothing}, this modified mass makes sense if $u$ lies in $H^s$.
In general, the quantity $M(Iu)$ is not conserved in time, 
but we will show that its increments can be controlled in terms of $N$.



Let us first apply the $I$-operator to our cubic fNLS \eqref{fnls1}:
\begin{align}
\begin{cases}
i\partial_t Iu=I D^\al u + I(| u |^2 u) \\
Iu|_{t=0} = I u_0.
\end{cases}
\label{Isys1}
\end{align}

\noi
In the following proposition, we show a local well-posedness result for the $I$-system \eqref{Isys1} as a variant of local well-posedness of \eqref{fnls1}.
\begin{proposition}
\label{PROP:ILWP1}
Let $\al>2$ and $\frac{2-\al}{4} \leq s < 0$. 
Then, given any $u_0 \in H^s (\R)$, there exists $T = T(\| I u_0 \|_{L^2}) > 0$ and a unique solution $Iu \in C ([-T, T]; L^2 (\R))$ to the $I$-system \eqref{Isys1}. 
Furthermore, there exists $\eta> 0$ sufficiently small such that for any $t_0 \in \R$ and 
 $u (t_0) \in H^s(\R)$ satisfying
\begin{align*}
M(I u(t_0)) \leq \eta,
\end{align*} 

\noi
the solution $Iu$ to the $I$-system \eqref{Isys1}
exists on the interval $J = [t_0 - 1, t_0 + 1]$ with the bound
\begin{align*}
\| Iu\|_{X_J^{0,\frac 12 + \eps}} \leq 2 \eta
\end{align*}

\noi
for some $\eps > 0$ sufficiently small, uniformly in $ N \geq 1$.
\end{proposition}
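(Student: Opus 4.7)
The plan is to run a Picard iteration for the $I$-system in its Duhamel form
\begin{align*}
Iu(t) = S(t) I u_0 \mp i \int_0^t S(t-t') \, I \big( |u|^2 u \big)(t') \, dt',
\end{align*}
exactly as in the proof of Theorem \ref{TM:LWP1}, but now at the level of $Iu$ in the space $X_T^{0,\frac 12 + \eps}$. This reduces everything to establishing a modified trilinear estimate compatible with the $I$-operator, together with linear bounds from Lemma \ref{LEM:lin}. Combined with the embedding $X_T^{0,\frac12+\eps} \embeds C([-T,T];L^2(\R))$, this will produce a unique fixed point with lifespan $T = T(\|Iu_0\|_{L^2})$.

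The central step is to upgrade Proposition \ref{PROP:triR} to the ``$I$-trilinear'' estimate
\begin{align*}
\bigl\| I(u_1 \overline{u_2} u_3) \bigr\|_{X_T^{0,-\frac12+2\eps}} \les T^{\eps} \prod_{j=1}^3 \| I u_j \|_{X_T^{0,\frac12+\eps}},
\end{align*}
with a constant \emph{independent} of $N \ge 1$. Dualizing as in the proof of Proposition \ref{PROP:triR}, this reduces to the pointwise control of the extra symbol
\begin{align*}
\mu(\xi_1,\xi_2,\xi_3,\xi_4) := \frac{m(\xi_4)}{m(\xi_1)\, m(\xi_2)\, m(\xi_3)}
\end{align*}
on the hyperplane $\xi_1 - \xi_2 + \xi_3 - \xi_4 = 0$. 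Since $m$ is even, non-increasing on $[0,\infty)$, and satisfies $m(\xi) \sim (|\xi|/N)^s$ for $|\xi|\ge 2N$, and since at least the two largest of $|\xi_1|,\ldots,|\xi_4|$ are comparable on this hyperplane, a case analysis on which frequency attains the maximum (and on whether each $|\xi_j|$ sits above or below $N$) combined with the resonance decay $|\xi_1+\xi_2||\xi_2+\xi_3||\xi_{\max}|^{\al-2}$ of Lemma \ref{LEM:factor}—which is already absorbing slack in the proof of Proposition \ref{PROP:triR}—shows that the net symbol, augmented by $\mu$, can still be estimated by the same Strichartz and bilinear-Strichartz arguments of Remark \ref{RMK:str} and \eqref{bistr}, uniformly in $N$. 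This is the $I$-method interpolation mechanism of \cite{ckstt2}, specialized to our setting.

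With \eqref{Itri} at hand, a standard fixed point argument on a ball of radius $\sim \|Iu_0\|_{L^2}$ in $X_T^{0,\frac12+\eps}$ yields the first statement. For the small-data claim, note that $M(Iu(t_0)) \le \eta$ means $\|Iu(t_0)\|_{L^2} \le \sqrt{\eta}$, and choosing $\eta$ small enough we may take the lifespan to be $T=1$; time-translation invariance then gives existence on $J = [t_0-1,t_0+1]$ and the bound $\|Iu\|_{X_J^{0,\frac12+\eps}} \les \sqrt{\eta}$, absorbable into $2\eta$ (equivalently, $2\sqrt{\eta}$) for $\eta$ sufficiently small, uniformly in $N$.

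The main obstacle is the modified trilinear estimate: although Proposition \ref{PROP:triR} provides the skeleton, one must verify in every frequency regime that the multiplier $\mu$ costs no power of $N$. The delicate case is ``high-low-high $\to$ high'' interactions where one input frequency is much smaller than the others, forcing one to carefully balance $\mu$ against the resonance gain from Lemma \ref{LEM:factor}; everything else follows the proof of Theorem \ref{TM:LWP1} essentially verbatim.
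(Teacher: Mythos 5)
Your proposal is correct and follows essentially the same route as the paper: a contraction argument for $Iu$ in $X_T^{0,\frac12+\eps}$, reduced to the $I$-modified trilinear estimate, followed by the small-data/unit-time-interval observation. The only difference is in how the extra symbol $m(\xi_4)/(m(\xi_1)m(\xi_2)m(\xi_3))$ is handled: the paper avoids any case analysis by noting that $m(\xi)\jb{\xi}^{-s}\ges 1$ for all $\xi$ and that the two largest frequencies are comparable, so the $I$-multiplier is pointwise dominated by the multiplier $m_1$ already estimated in the proof of Proposition \ref{PROP:triR}, making the $I$-trilinear bound an immediate corollary rather than requiring the frequency-regime verification you sketch.
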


\begin{proof} 
Following the proof of local well-posedness of the cubic fNLS \eqref{fnls1} as in Subsection~\ref{SEC:LWP1}, all we need to show is the following trilinear estimate:
\begin{align*}
\| I(u_1 \cj{u_2} u_3) \|_{X_T^{0,-\frac{1}{2} + 2 \eps}} \les \prod_{j = 1}^3 \| Iu_j \|_{X_T^{0,\frac{1}{2} + \eps}}.
\end{align*}	

\noi
By using the same reduction as in the proof of Proposition \ref{PROP:triR}, we only need to show
\begin{align*}
\int_{\substack{\tau_1 + \tau_2 + \tau_3 + \tau_4 = 0 \\ \xi_1 + \xi_2 + \xi_3 + \xi_4 = 0}} m_2 ( \cj{\tau}, \cj{\xi} ) \prod_{j=1}^4 f_j(\tau_j, \xi_j) d \tau_1 d \tau_2 d \tau_3 d \xi_1 d \xi_2 d \xi_3 \les \prod_{j=1}^4 \| f_j \|_{L^2_{\tau_j, \xi_j}},
\end{align*}

\noi
where $f_j$'s are nonnegative functions and the multiplier $m_2 (\cj{\tau}, \cj{\xi})$ is defined by
\begin{align*}
m_2 ( \cj{\tau}, \cj{\xi} ) &\hphantom{:}= m_2 (\tau_1,\dots,\tau_4, \xi_1,\dots,\xi_4) \\
&:= \frac{ \langle \t_4 - | \xi_4 | ^{\al} \rangle ^{-	 \frac 12 + 2 \e} m(\xi_4)  }{  \langle \t_1 + | \xi_1 | ^{\al} \rangle^{\frac 12 + \e}     \langle \t_2 - | \xi_2 | ^{\al} \rangle   ^{\frac 12 + \e} \langle \t_3 + | \xi_3 | ^{\al} \rangle ^{\frac 12 + \e} m(\xi_1) m(\xi_2) m(\xi_3)   }.
\end{align*}

\noi
Since the largest two frequencies are comparable, we can assume without loss of generality that $|\xi_1| \ges |\xi_4|$. Also, we have $m(\xi) \jb{\xi}^{-s} \ges 1$ for all $\xi \in \R$. Thus, we have
\begin{align*}
\frac{m(\xi_4) \jb{\xi_4}^{-s}}{m(\xi_1) \jb{\xi_1}^{-s} m(\xi_2) \jb{\xi_2}^{-s} m(\xi_3) \jb{\xi_3}^{-s}} \les \frac{1}{m(\xi_2) \jb{\xi_2}^{-s} m(\xi_3) \jb{\xi_3}^{-s}} \les 1,
\end{align*}

\noi
so that
\begin{align*}
m_2 ( \cj{\tau}, \cj{\xi} ) \les \frac{ \langle \t_4 - | \xi_4 | ^{\al} \rangle ^{-\frac 12 + 2 \e} \langle \xi_1 \rangle^{-s}  \langle  \xi_ 2 \rangle^{-s}  \langle  \xi _3\rangle^{-s}       }{  \langle \t_1 + | \xi_1 | ^{\al} \rangle^{\frac 12 + \e}     \langle \t_2 - | \xi_2 | ^{\al} \rangle   ^{\frac 12 + \e} \langle \t_3 + | \xi_3 | ^{\al} \rangle ^{\frac 12 + \e}    \langle  \xi_4 \rangle^{-s}   }.
\end{align*}

\noi
The rest of the steps follow from the same way as Proposition \ref{PROP:triR} (more precisely, \cite[Proposition~3.5]{kihoon}).

\end{proof}

Before proving Theorem \ref{TM:GWP1}, we need to further introduce a correction term and present some useful estimates. 

Let $u$ be a smooth solution to the fraction NLS
\begin{align}
\dt u = - i D^\al u - i |u|^2 u.
\label{smooth}
\end{align}

\noi
Let $d \geq 2$ be an even integer and let $M_d = M_d (\xi_1, \dots, \xi_d)$ be a multiplier of order $d$. Recalling the notations in Subsection \ref{SUBSEC:notations}, by taking the Fourier transform of \eqref{smooth}, we obtain the following differentiation law:
\begin{align}
\dt \Ld_d (M_d; u(t))
=
i \Ld_d \bigg( M_d \sum_{j=1}^d (-1)^{j} |\xi_j|^\al; u(t) \bigg) + i \Ld_{d+2} \bigg( \sum_{j=1}^d (-1)^j \mathbf X^2_j(M_d) ; u(t) \bigg).
\label{diff}
\end{align}

By Plancherel's theorem, we get
\begin{align*}
M(Iu(t)) &= \| Iu (t) \|_{L_x^2}^2 \\
&=\int_{\xi_1+\xi_2=0} m(\xi_1) m(\xi_2) \ft{u}(t, \xi_1) \ft{\cj{u}}(t, \xi_2) d \xi_1 \\
&=\Lambda_2 \big( m (\xi_1) m(\xi_2);u (t) \big).
\end{align*}

\noi
By differentiating $M(Iu)$ in time and using \eqref{diff}, we obtain
\begin{equation}
\begin{split}
\frac{d}{dt} M(Iu(t)) &= \frac{d}{dt}\Ld_2 \big( m(\xi_1)m(\xi_2);u(t) \big)  \\
&= -i \Ld_2 \big( m(\xi_1)m(\xi_2) (|\xi_1|^\al-|\xi_2|^\al); u(t) \big) \\
&\quad - i\Ld_4 \big( m(\xi_{123})m(\xi_4)-m(\xi_1)m(\xi_{234}); u(t) \big)
\end{split}
\label{dtM}
\end{equation}

\noi
Since $\xi_1+\xi_2=0$ in $\Ld_2$,
the first term on the right-hand-side of \eqref{dtM} vanishes. For the second term on the right-hand-side of \eqref{dtM}, by using the condition $\xi_1 + \xi_2 + \xi_3 + \xi_4 = 0$ and symmetrizing the second term, we have
\begin{align}
\frac{d}{dt}M(Iu(t))&=\frac{i}{2} \Ld_4 \big( m(\xi_1)^2 - m(\xi_2)^2 + m(\xi_3)^2 - m(\xi_4)^2; u(t) \big) = \Ld_4( M_4; u(t) ),
\label{dM}
\end{align}

\noi
where the multiplier $M_4$ is defined by
\begin{align*}
M_4(\xi_1,\ldots,\xi_4) := \frac{i}{2} \big( m(\xi_1)^2 - m(\xi_2)^2 + m(\xi_3)^2 - m(\xi_4)^2   \big). 
\end{align*}

We define
\begin{align}
\s_4 (\xi_1, \dots, \xi_4) := - \frac{i M_4 (\xi_1, \dots, \xi_4)}{|\xi_1|^\al - |\xi_2|^\al + |\xi_3|^\al - |\xi_4|^\al},
\label{sigma4}
\end{align}

\noi
which is always finite in view of Lemma \ref{LEM:si4R} below. Let us now define a new modified mass with a correction term:
\begin{align}
M^4(Iu(t)) := M(Iu(t)) + \Lambda_4 ( \sigma_4; u(t) ) 
\label{mEng}   
\end{align}

\noi
The role of $\sigma_4$ here is to make a cancellation. 
Specifically, by using the differentiation law \eqref{diff}, we obtain
\begin{align*}
\frac{d}{dt}\Lambda_4 (\sigma_4;u(t)) &= - i\Ld_4 \big(\s_4 (|\xi_1|^\al-|\xi_2|^\al+|\xi_3|^\al-|\xi_4|^\al ); u(t) \big) \\
&\quad -i \Ld_6 \big( \mathbf X_1^2(\s_4) -\mathbf X_2^2(\s_4)+\mathbf X_3^2(\s_4)-\mathbf X_4^2(\s_4);u(t) \big),
\end{align*}

\noi 
where
\begin{align*}
\mathbf X_1^2(\s_4) (\xi_1, \dots, \xi_6) &= \s_4(\xi_{123},\xi_4,\xi_5,\xi_{6}), \\
\mathbf X_2^2(\s_4) (\xi_1, \dots, \xi_6) &= \s_4(\xi_1,\xi_{234},\xi_5,\xi_{6}), \\
\mathbf X_3^2(\s_4) (\xi_1, \dots, \xi_6) &= \s_4(\xi_1,\xi_2,\xi_{345},\xi_{6}), \\
\mathbf X_4^2(\s_4)(\xi_1, \dots, \xi_6) &= \s_4(\xi_1,\xi_2,\xi_3,\xi_{456}).
\end{align*}

\noi
By \eqref{sigma4}, the symmetry between $\mathbf X_1^2(\s_4)$ and $\mathbf X_3^2(\s_4)$, the symmetry between $\mathbf X_2^2(\s_4)$ and $\mathbf X_4^2(\s_4)$, and conjugation,
we can write
\begin{align*}
\frac{d}{dt}\Ld_4 (\sigma_4;u(t)) &= - \Ld_4 ( M_4; u(t) ) - 2i \Ld_6 \big( \s_4(\xi_{123},\xi_4,\xi_5,\xi_{6}) -\s_4(\xi_1,\xi_2,\xi_3,\xi_{456});u(t) \big) \\
&= - \Ld_4 ( M_4; u(t) ) + 4 \Re i \Ld_6 \big( \s_4(\xi_1,\xi_2,\xi_3,\xi_{456});u(t) \big).
\end{align*}

\noi
Thus, by differentiating \eqref{mEng} and using \eqref{dM}, we obtain
\begin{align}
\frac{d}{dt} M^4(Iu(t))
= 4 \Re i\Lambda_6 \big( \sigma_4 (\xi_1,\xi_2,\xi_3,\xi_{456} ); u(t) \big) = 4 \Re \Ld_4 (M_6; u(t) ),
\label{dM4}
\end{align}

\noi
where the multiplier $M_6$ is defined by
\begin{align*}
M_6 (\xi_1, \dots, \xi_6) := i \s_4 (\xi_1,\xi_2,\xi_3,\xi_{456}).
\end{align*}

\smallskip
We now present several estimates related to the terms introduced above, starting with the following bound on $\s_4$ defined in \eqref{sigma4}.

\begin{lemma}
\label{LEM:si4R}
Let $\al > 2$ and $m$ be the multiplier defined in \eqref{Iop1}. Let $|\xi_j|\sim N_j$ with $j=1,2,3,4$, where $N_j \geq 1$ are dyadic numbers. Let $N^{(1)}, N^{(2)}, N^{(3)}, N^{(4)}$ be a rearrangement of $N_1, N_2, N_3, N_4$ such that $N^{(1)} \geq N^{(2)} \geq N^{(3)} \geq N^{(4)}$. Then, under the condition $\xi_1 + \cdots + \xi_4 = 0$, we have the bound
\begin{align*}
|\sigma_4 (\xi_1,\dots,\xi_4 ) | \les \frac{m (N^{(4)})^2}{( N + N^{(3)} )( N + N^{(1)} )^{\al-1}}. 
\end{align*}
\end{lemma}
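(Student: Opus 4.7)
The plan is to combine a lower bound on the denominator of $\sigma_4$ from Lemma \ref{LEM:factor} with a pointwise estimate on the numerator $M_4$ obtained via the mean value theorem and the evenness of $m$. First I reduce to the nontrivial regime: if $N^{(1)} \ll N$, then all four frequencies lie where $m \equiv 1$, so $M_4 = 0$ and the bound is trivial. Hence I may assume $N^{(1)} \ges N$, and the constraint $\xi_1 + \xi_2 + \xi_3 + \xi_4 = 0$ forces $N^{(1)} \sim N^{(2)}$. With $|\xi_{\max}| \sim N^{(1)}$ and $N + N^{(1)} \sim N^{(1)}$, Lemma \ref{LEM:factor} gives
\[
\big| |\xi_1|^\al - |\xi_2|^\al + |\xi_3|^\al - |\xi_4|^\al\big| \ges |\xi_1+\xi_2|\, |\xi_2+\xi_3|\, (N^{(1)})^{\al-2},
\]
so it suffices to show the pointwise bound
\[
|M_4| \les \frac{m(N^{(4)})^2\, |\xi_1+\xi_2|\, |\xi_2+\xi_3|}{(N+N^{(3)})\, N^{(1)}}.
\]

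To estimate $|M_4|$, set $g := m^2$, which is smooth and even with symbol-type bounds $|g^{(k)}(\xi)| \les m(\xi)^2/(N+|\xi|)^k$ for $k = 1, 2$. Using evenness to rewrite $g(\xi_2) = g(-\xi_2)$ and $g(\xi_4) = g(-\xi_4)$, one obtains
\[
g(\xi_1) - g(\xi_2) + g(\xi_3) - g(\xi_4) = [g(\xi_1) - g(-\xi_4)] + [g(\xi_3) - g(-\xi_2)],
\]
and a first application of the mean value theorem to each bracket produces a common factor $|\xi_1+\xi_4| = |\xi_2+\xi_3|$ (via the sum-zero identity). Combining the two contributions, using the oddness of $g'$, and applying the mean value theorem a second time with displacement $-\xi_4 - \xi_3 = \xi_1 + \xi_2$ extracts the remaining factor $|\xi_1 + \xi_2|$, leaving a double integral of $g''$ over the parallelogram $\Omega$ with corners $\xi_1, -\xi_2, \xi_3, -\xi_4$.

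The final step is a case analysis on the dyadic sizes. When $N^{(3)}, N^{(4)} \ll N$, two of the $g(\xi_j)$'s equal $1$ and $M_4$ collapses to the single difference $\tfrac{i}{2}(g(\xi_1) - g(\xi_2))$; a single MVT then gives $|M_4| \les |\xi_1+\xi_2|\, m(N^{(1)})^2/N^{(1)}$, and the claim follows from $|\xi_2+\xi_3| \sim N^{(1)}$ (by sum-zero) together with $m(N^{(1)})^2 = (N^{(1)}/N)^{2s} \leq N^{(1)}/N$ (valid because $2s - 1 < 0$ and $N^{(1)} \ges N$). When $N^{(3)} \ges N$, the double MVT combined with the decay $|g''(\xi)| \les m(\xi)^2/(N+|\xi|)^2$ yields the bound after verifying that $\sup_\Omega |g''|$ is controlled by $m(N^{(3)})^2/[(N+N^{(3)})\, N^{(1)}]$, and one concludes via $m(N^{(4)})^2 \ges m(N^{(3)})^2$ by monotonicity of $m$ in $|\xi|$. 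The main obstacle is the geometric bookkeeping in this last regime, in particular showing that the contribution from any portion of $\Omega$ at small magnitude can be absorbed into the existing factors.
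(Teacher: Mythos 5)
Your overall strategy --- Lemma \ref{LEM:factor} for the denominator together with mean value theorem bounds on $g=m^2$ for the numerator $M_4$ --- is exactly the route the paper takes (the paper only invokes Lemma \ref{LEM:factor} and then defers the numerator estimate to \cite[Lemma 4.1]{kihoon}). Your reduction to $N^{(1)}\sim N^{(2)}\ges N$, the double--difference identity obtained from evenness of $g$, and the single--MVT treatment of the regime $N^{(3)}, N^{(4)}\ll N$ are all correct, modulo one small omission: when the two small frequencies occupy slots of the same parity in the alternating sum, $M_4$ does not collapse to $\tfrac{i}{2}(g(\xi_1)-g(\xi_2))$; in that configuration, however, both $|\xi_1+\xi_2|$ and $|\xi_2+\xi_3|$ are $\sim N^{(1)}$, so the trivial bound $|M_4|\les 1$ already suffices.

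The genuine gap is in the case $N^{(3)}\ges N$. The estimate $|M_4|\les |\xi_1+\xi_2|\,|\xi_2+\xi_3|\,\sup_\Omega|g''|$ combined with the claim that $\sup_\Omega|g''|$ is controlled by $m(N^{(3)})^2/[(N+N^{(3)})(N+N^{(1)})]$ is false in general: the interval $\Omega$ can pass through the transition region $|\zeta|\sim N$, where $|g''|\sim N^{-2}$, which is far larger than the required quantity once $N^{(1)}\gg N$. Concretely, for $(\xi_1,\xi_2,\xi_3,\xi_4)=(2R,R,-2R,-R)$ with $R\gg N$ one has $\Omega=[-2R,2R]\ni 0$, so the double--MVT bound gives $\sim R^2/N^2$, whereas $|M_4|\sim m(R)^2=(R/N)^{2s}$ and the target is also $\sim m(R)^2$; the sup over $\Omega$ loses a factor $(R/N)^{2-2s}$. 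This cannot be repaired by ``absorbing the small--magnitude portion of $\Omega$'': the correct dichotomy (as in \cite{kihoon} and the CKSTT literature) is not on $N^{(3)}$ versus $N$ but on the sizes of $|\xi_1+\xi_2|$ and $|\xi_2+\xi_3|$ relative to $N^{(1)}$. If both are $\ll N^{(1)}$, then all four $N_j$ are comparable, $\Omega$ stays at distance $\sim N^{(1)}$ from the origin, and your double MVT closes. If one of them is $\ges N^{(1)}$, one must abandon the double MVT and instead use a single MVT in the remaining direction (or the trivial bound $|M_4|\les m(N^{(4)})^2$), checking in each sub--configuration that the product $|\xi_1+\xi_2|\,|\xi_2+\xi_3|$ together with the single derivative gain dominates $(N+N^{(3)})(N+N^{(1)})$. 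Filling in that case analysis is precisely the content of \cite[Lemma 4.1]{kihoon} that the paper cites.
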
 

\begin{proof}	
As mentioned in the proof of Proposition \ref{PROP:triR}, we have
\begin{align*}
\big| |\xi_1|^\al - |\xi_2|^\al + |\xi_3|^\al - |\xi_4|^\al \big| \ges |\xi_1+\xi_2 | | \xi_1+\xi_4 |   | \xi_{\text{max}} |^{\al-2}
\end{align*} 

\noi
under $\xi_1 + \xi_2 + \xi_3 + \xi_4 = 0$, where $\xi_{\text{max}} = \max \{ |\xi_1|, |\xi_2|, |\xi_3|, |\xi_4| \}$. Thus, we have
\begin{align*}
|\sigma_4 (\xi_1,\dots,\xi_4 ) | \les	\frac{ \big| m(\xi_1)^2 - m(\xi_2)^2 + m(\xi_3)^2 - m(\xi_4)^2 \big|}{|\xi_1+\xi_2| |\xi_1+\xi_4| |\xi_{\text{max}}|^{\al-2} }.
\end{align*}

\noi
The rest of the steps follow from minor modifications of \cite[Lemma 4.1]{kihoon}, and so we omit details.
\end{proof}

We now have the following lemma regarding the difference between $M^4 (I u)$ and $M (I u)$.

\begin{lemma}
\label{LEM:Mdiff}
Let $\al>2$, $\frac{2-\al}{4}\leq s<0$, and $u$ be a  smooth solution of the fraction NLS \eqref{fnls1} on an interval $J \subset \R$.
Then, for any $t \in J$, we have
\begin{align*}
\big| M^4(Iu(t)) - M(Iu(t)) \big| \les \| Iu(t) \|_{L^2(\R)}^4.
\end{align*}
\end{lemma}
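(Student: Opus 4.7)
The plan is to observe that, directly from the definition of $M^4$ in \eqref{mEng}, the difference is exactly
\[
M^4(Iu(t)) - M(Iu(t)) = \Lambda_4(\sigma_4; u(t)),
\]
so the task reduces to the fixed-time bound $|\Lambda_4(\sigma_4; u)| \lesssim \|Iu\|_{L^2(\R)}^4$. I would first pass to $v := Iu$ by absorbing the factors of $m$ on the Fourier side: using $\widehat{Iu}(\xi) = m(\xi) \widehat{u}(\xi)$ and the evenness of $m$, one has $\Lambda_4(\sigma_4; u) = \Lambda_4(\widetilde{\sigma}_4; v)$ where
\[
\widetilde{\sigma}_4(\xi_1,\dots,\xi_4) := \frac{\sigma_4(\xi_1,\dots,\xi_4)}{\prod_{j=1}^4 m(\xi_j)}.
\]

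Next I would perform a Littlewood--Paley decomposition $v = \sum_{N} P_N v$ and expand the multilinear expression into a sum of pieces indexed by dyadic tuples $(N_1,N_2,N_3,N_4)$. The frequency constraint $\xi_1+\xi_2+\xi_3+\xi_4=0$ forces $N^{(1)} \sim N^{(2)}$ (the two largest), which is essential. On each dyadic piece, Lemma~\ref{LEM:si4R} yields
\[
|\widetilde{\sigma}_4| \lesssim \frac{m(N^{(4)})^2}{(N+N^{(3)})(N+N^{(1)})^{\al-1} \prod_j m(N_j)}.
\]
For the multilinear integral, I would apply Plancherel to rewrite $\Lambda_4$ as $\int \prod P_{N_j} v\,dx$ (up to conjugations), then invoke H\"older's inequality by placing the two lowest-frequency factors in $L^\infty$ and the two highest in $L^2$, together with the one-dimensional Bernstein inequality $\|P_M v\|_{L^\infty(\R)} \lesssim M^{1/2} \|P_M v\|_{L^2}$. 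This produces the piecewise bound
\[
|\Lambda_4(\widetilde{\sigma}_4 \chi_{\vec N}; P_{N_1}v,\dots,P_{N_4}v)| \lesssim \frac{(N^{(3)} N^{(4)})^{1/2} \, m(N^{(4)})^2}{(N+N^{(3)})(N+N^{(1)})^{\al-1}\prod_j m(N_j)} \prod_{j=1}^4 \|P_{N_j} v\|_{L^2}.
\]

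Finally, I would sum this estimate over all dyadic tuples. The decisive gain is the factor $(N+N^{(1)})^{-(\al-1)}$: since $\al > 2$, we have $\al - 1 > 1$, giving absolute summability in $N^{(1)} \sim N^{(2)}$, while the factor $(N^{(3)} N^{(4)})^{1/2}/(N+N^{(3)})$ stays bounded by a summable kernel in $(N^{(3)}, N^{(4)})$. Combining this with Cauchy--Schwarz in each of the dyadic summation variables and using $\|v\|_{L^2}^2 = \sum_N \|P_N v\|_{L^2}^2$ reduces everything to $\|v\|_{L^2}^4 = \|Iu\|_{L^2}^4$. The main obstacle will be verifying that the accumulated quotient $m(N^{(4)})^2 / \prod_j m(N_j)$ together with the Bernstein prefactor $(N^{(3)} N^{(4)})^{1/2}$ remains controlled by a summable kernel in the dyadic variables; this requires splitting into cases according to whether each $N_j$ lies above or below the threshold $N$, where the symbol $m$ transitions, and relies crucially on $\al > 2$ together with the resonance relation $N^{(1)} \sim N^{(2)}$.
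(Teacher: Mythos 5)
Your proposal is correct and follows essentially the same route as the paper: reduce to $v = Iu$ with the renormalized multiplier $\s_4/\prod_j m(\xi_j)$, decompose dyadically, apply Lemma \ref{LEM:si4R}, and estimate the quadrilinear form by putting the two highest frequencies in $L^2$ and the two lowest in $L^1$ on the Fourier side (your Plancherel--H\"older--Bernstein step is the physical-space version of the paper's Young/H\"older argument). The ``main obstacle'' you defer resolves exactly as you anticipate: bounding $1/m(\xi) \les \jb{\xi}^{-s}$ collapses the dyadic kernel to $(N^{(1)})^{-(\al - 1 + 3s)}$, and the summability requires $\al - 1 + 3s > 0$, which uses the hypothesis $s \geq \frac{2-\al}{4}$ (not merely $\al > 2$).
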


\begin{proof}
From \eqref{mEng}, we only need to show
\begin{align*}
|\Ld_4 (\s_4; u(t))| \les \| I u (t) \|_{L^2}^4.
\end{align*}

\noi
By letting $v = I u$, it suffices to show
\begin{align*}
|\Ld_4 (\s_4'; v(t))| \les \| v (t) \|_{L^2}^4,
\end{align*}

\noi
where
\begin{align*}
\s_4' (\xi_1, \dots, \xi_4) = \frac{\s_4 (\xi_1, \dots, \xi_4)}{ \prod_{j = 1}^4 m (\xi_j) }.
\end{align*}

For simplicity, for a dyadic number $K \geq 1$, we denote $v_K = \pi_K v (t)$. We dyadically decompose all $v$'s with frequency scales $N_1, N_2, N_3, N_4$, and we let $N^{(1)}, N^{(2)}, N^{(3)}, N^{(4)}$ be a rearrangement of these dyadic frequency scales such that $N^{(1)} \geq N^{(2)} \geq N^{(3)} \geq N^{(4)}$. By Lemma \ref{LEM:si4R}, the fact that $\jb{\xi}^{s} \les m (\xi) \leq 1$, Young's convolution inequalities, and H\"older's inequalities, we obtain
\begin{align*}
|\Ld_4 (\s_4'; v(t))| &\leq \sum_{\substack{N_1, \dots, N_4 \geq 1 \\ \text{dyadic}}} \int_{\G_4} | \s_4' (\xi_1, \dots, \xi_4) | \prod_{j = 1}^4 | \ft{v_{N_j}} (\xi_j)| d\xi_1 \cdots d\xi_4 \\
&\les \sum_{\substack{N_1, \dots, N_4 \geq 1 \\ \text{dyadic}}} \frac{m (N^{(4)})^2}{( N + N^{(3)} )( N + N^{(1)} )^{\al-1} \prod_{j = 1}^4 m(N^{(1)})} \big( |\ft{v_{N_1}}| * \cdots * |\ft{v_{N_4}}| \big) (0) \\
&\les \sum_{\substack{N_1, \dots, N_4 \geq 1 \\ \text{dyadic}}} \frac{1}{(N^{(1)})^{\al - 1 + s} (N^{(2)})^s (N^{(3)})^{1 + s}} \| \ft{v_{N^{(1)}}} \|_{L^2} \| \ft{v_{N^{(2)}}} \|_{L^2} \| \ft{v_{N^{(3)}}} \|_{L^1} \| \ft{v_{N^{(4)}}} \|_{L^1} \\
&\leq \sum_{\substack{N_1, \dots, N_4 \geq 1 \\ \text{dyadic}}} \frac{1}{(N^{(1)})^{\al - 1 + 3s}} \prod_{j = 1}^4 \| \ft{v_{N^{(j)}}} \|_{L^2} \\
&\les \| v (t) \|_{L^2}^4,
\end{align*}

\noi
where the last inequality holds since $\al - 1 + 3s > 0$ given $s \geq \frac{2 - \al}{4}$ and $\al > 2$. This finishes our proof.
\end{proof}

We now show the following almost conservation law for $M^4 (I u)$.

\begin{proposition}
\label{PROP:aclaw}
Let $\al>2$, $\frac{2-\al}{4}\leq s<0$, $\eps > 0$ and $u$ be a smooth solution of the cubic fNLS~\eqref{fnls1} on an interval $J \subset \R$. Then, for any $t, t_0 \in J$, we have
\begin{align*}
\big|  M^4(Iu(t))- M^4(Iu(t_0)) \big| \les  N^{- 2 \al + 2 + \dl} \| Iu \|_{X_J^{0,\frac{1}{2}+\eps}}^6
\end{align*}

\noi
for $\dl > 0$ arbitrarily small.
\end{proposition}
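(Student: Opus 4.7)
My plan is to integrate the differentiation identity \eqref{dM4} in time and estimate the resulting sextilinear expression. By the fundamental theorem of calculus, for $t, t_0 \in J$,
\begin{align*}
M^4(Iu(t)) - M^4(Iu(t_0)) = 4 \Re \int_{t_0}^t \Lambda_6(M_6; u(t')) dt',
\end{align*}
so it suffices to bound this integral by $N^{-2\alpha+2+\delta}\|Iu\|_{X_J^{0,\frac12+\eps}}^6$. Setting $v = Iu$ and changing variables, we rewrite
\begin{align*}
\int_{t_0}^t \Lambda_6(M_6; u(t'))dt' = \int_\R \Lambda_6\Big(\widetilde M_6 ; v(t')\Big) \mathbf{1}_{[t_0,t]}(t') dt',
\quad \widetilde M_6 := \frac{M_6}{\prod_{j=1}^6 m(\xi_j)}.
\end{align*}
After extending $v$ to an $X^{0,\frac12+\eps}$ function on all of $\R$ (with an $\eps$-loss via Lemma \ref{LEM:time} to handle the sharp time cutoff), the task becomes bounding a sextilinear expression with multiplier $\widetilde M_6$.

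Next I would perform a dyadic decomposition $|\xi_j|\sim N_j$ for $j=1,\dots,6$, and additionally decompose $|\xi_4+\xi_5+\xi_6|\sim K$ (since $\sigma_4$ depends on this sum). The bound from Lemma \ref{LEM:si4R} applied to $\sigma_4(\xi_1,\xi_2,\xi_3,\xi_{456})$ with sorted frequencies $N^{(1)}\geq N^{(2)}\geq N^{(3)}\geq N^{(4)}$ drawn from $\{N_1,N_2,N_3,K\}$ gives
\begin{align*}
|\sigma_4(\xi_1,\xi_2,\xi_3,\xi_{456})| \lesssim \frac{m(N^{(4)})^2}{(N+N^{(3)})(N+N^{(1)})^{\alpha-1}}.
\end{align*}
Note that the multiplier vanishes when $N^{(1)} < N$ (all four $m$-factors equal $1$), so we may assume $N^{(1)}\gtrsim N$. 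The factor $1/\prod_{j=1}^6 m(\xi_j) \lesssim \prod_j (1+|\xi_j|/N)^{-s}$ is controlled at each frequency by a polynomial loss, which is absorbable thanks to the strong smoothing in $\sigma_4$.

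To close the estimate, I would group the six frequency factors into pairs and apply the Strichartz embeddings recorded in Remark \ref{RMK:str} together with the bilinear Strichartz estimate \eqref{bistr} on the two highest-frequency factors (which are forced to be comparable by momentum conservation). A typical arrangement is: bilinear Strichartz on the two largest frequencies $\lesssim (N^{(1)})^{\frac{1-\alpha}{2}}$ times the product of $X^{0,\frac12+\eps}$-norms, followed by placing the remaining four factors in $L^{2\alpha}_t L^\infty_x$ and $L^4_t L^\infty_x$ combinations using \eqref{str1}. In the hardest regime where three of $\{N_1,N_2,N_3,K\}$ are $\sim N^{(1)}\gtrsim N$, the multiplier bound yields a gain of roughly $(N^{(1)})^{-\alpha}$, and together with the bilinear gain $(N^{(1)})^{-\frac{\alpha-1}{2}}$ and the power losses from $1/m$, one obtains $(N^{(1)})^{-2\alpha+2+\delta'}$; since $N^{(1)}\gtrsim N$, this gives the target $N^{-2\alpha+2+\delta}$ after summing the geometric series over $N^{(1)}$ with the $\delta$-loss absorbing logarithmic sums over the remaining dyadic scales.

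The main obstacle will be the case analysis and careful bookkeeping: one must verify in every frequency configuration (in particular, low-low-high-high versus high-high-high-high within $\sigma_4$, and various placements of the split frequency $\xi_4+\xi_5+\xi_6$) that the $\sigma_4$-gain, the bilinear Strichartz gain, and the polynomial loss from the inverse $m$-weights combine to yield the uniform factor $N^{-2\alpha+2+\delta}$. The constraint $s\geq \frac{2-\alpha}{4}$ is exactly what makes the worst configuration (fully resonant high-high-high-into-high, as in Remark \ref{RM:cteg}) affordable, mirroring the analogous role of this threshold in Proposition \ref{PROP:triR} and Lemma \ref{LEM:Mdiff}.
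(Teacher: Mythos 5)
Your setup is the same as the paper's: integrate \eqref{dM4}, absorb the $I$-operator by passing to $v=Iu$ and the multiplier $\s_4/\prod_j m(\xi_j)$, decompose dyadically including the split frequency $\xi_{456}$, and invoke Lemma \ref{LEM:si4R}. The gap is in your core multilinear step. You propose to apply the bilinear Strichartz estimate \eqref{bistr} to ``the two highest-frequency factors (which are forced to be comparable by momentum conservation).'' But Lemma \ref{LEM:blin}, and hence \eqref{bistr}, requires the two dyadic frequencies to be \emph{separated}, $N_1\leq N_2/8$; the gain $N_2^{\frac{1-\al}{2}}$ comes from transversality and is simply false for two comparable blocks (where the product can be fully resonant and only the trivial $L^4\times L^4$ bound is available). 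So the very pairing you single out is the one the estimate does not cover, and your arithmetic for the hardest regime --- multiplier gain $(N^{(1)})^{-\al}$ times a bilinear gain $(N^{(1)})^{-\frac{\al-1}{2}}$ --- rests on a gain that is not there.

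The paper's case analysis is organized precisely around this constraint. Writing $N_1\geq N_2\geq N_3$ and $N_4\geq N_5\geq N_6$, it distinguishes how many of $N_4,N_5,N_6$ exceed $N$, and in every application of \eqref{bistr} it pairs a high block with a \emph{low} block (e.g.\ $\|v_{N_1}v_{N_4}\|_{L^2}$ with $N_4\ll N\les N_1$ in Case 1, or $\|v_{N_1}v_{N_4}\|_{L^2}$ with $N_1\ll N_4$ in Subcase 4.2), putting the leftover factors in $L^\infty_J\l^1_\xi$ or $L^4_tL^\infty_x$. In the genuinely hardest configuration --- all of $N_4,N_5,N_6\ges N$ and $N_1\ges N_4$ (Case 4.1), so that no convenient separated pair exists --- it uses \emph{no} bilinear estimate at all: the bound comes from four copies of the linear estimate \eqref{str1}, $\|D^{\frac{\al-2}{4}}v_{N_j}\|_{L^4_tL^\infty_x}\les\|v_{N_j}\|_{X^{0,\frac12+\eps}}$, whose derivative gain $N_j^{\frac{2-\al}{4}}$ is exactly what the hypothesis $s\geq\frac{2-\al}{4}$ lets you trade against the $m(\xi_j)^{-1}\les N^sN_j^{-s}$ losses (via $\frac{2-\al}{4}-s\leq 0$ and $\frac54\al-\frac32+3s>0$). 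To repair your argument you would need to replace your pairing by the high-low pairings above and supply this linear-Strichartz fallback for the all-high case; as written, the proposal does not close.
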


\begin{proof}
By \eqref{dM4}, we have
\begin{align*}
\begin{split}
\big|  M^4(Iu(t))- M^4(Iu(t_0)) \big| &= \bigg| \int_{t_0}^t \frac{d}{dt} M^4 (Iu (t')) dt' \bigg| \\
&= 4 \bigg| \int_{t_0}^t \Ld_4 (M_6; u(t')) dt'  \bigg| \\
&= \int_{t_0}^t \int_{\G_6} |\s_4 (\xi_1, \xi_2, \xi_3, \xi_{456})| \prod_{j = 1}^6 |\ft{u} (t', \xi_j)| d\xi_1 \cdots d\xi_6 dt'. 
\end{split}
\end{align*}

\noi
As in the proof of Lemma \ref{LEM:Mdiff}, we let $v = Iu$, so that we only need to show
\begin{align*}
\int_{t_0}^t \int_{\G_6} |\s_4' (\xi_1, \xi_2, \xi_3, \xi_{456})| \prod_{j = 1}^6 |\ft{v} (t', \xi_j)| d\xi_1 \cdots d\xi_6 dt' \les N^{-2 \al + 2} \| v \|_{X^{0, \frac 12 + \eps}}^6,
\end{align*}

\noi
where
\begin{align*}
\s_4' (\xi_1, \xi_2, \xi_3, \xi_{456}) = \frac{\s_4 (\xi_1, \xi_2, \xi_3, \xi_{456})}{\prod_{j = 1}^6 m (\xi_j)}.
\end{align*}

We now use a dyadic decomposition so that $|\xi_j| \sim N_j$ for some dyadic numbers $N_j \geq 1$ for $j = 1, \dots, 6$, and also $|\xi_{456}| \sim N_{456}$ for some dyadic number $N_{456} \geq 1$. We denote by $N^{(1)}, N^{(2)}, N^{(3)}, N^{(4)}$ a rearrangement of $N_1, N_2, N_3, N_{456}$ such that $N^{(1)} \geq N^{(2)} \geq N^{(3)} \geq N^{(4)}$. We also define $N_{\text{max}} = \max \{N_1, \dots, N_6\}$. Note that if $N_1, N_2, N_3, N_{456} \ll N$, then by \eqref{sigma4} we know that $\s_4 \equiv 0$, and so the estimate follows directly. Thus, since two largest frequencies are comparable, we can assume that two of the frequency scales among $N_1, N_2, N_3, N_{456}$ are $\ges N$. By symmetry, we can also assume that $N_1 \geq N_2 \geq N_3$ and $N_4 \geq N_5 \geq N_6$.

As in \cite[Lemma 4.3]{kihoon}, we consider the following four cases. For simplicity, for a dyadic number $K \geq 1$, we denote $v_K = \pi_K v$.

\medskip \noi
\textbf{Case 1:} $N \gg N_4 \geq N_5 \geq N_6$. In this case, we must have $N_1 \sim N_2 \ges N \gg N_4 \geq N_5 \geq N_6$, $m(N_4) = m(N_5) = m(N_6) = 1$, $m(N_3)^{-1} \leq \max(1, N^s N_3^{-s}) \les N^s N_1^{-s}$, and $N_1 \sim N^{(1)} \sim N_{\text{max}}$. Thus, by Lemma \ref{LEM:si4R}, Young's convolution inequalities, H\"older's inequalities, Plancherel's theorem, the bilinear Strichartz estimate \eqref{bistr}, and Lemma \ref{LEM:emb}, we obtain
\begin{align*}
\int_{t_0}^t &\int_{\G_6} |\s_4' (\xi_1, \xi_2, \xi_3, \xi_{456})| \prod_{j = 1}^6 |\ft{v_{N_j}} (t', \xi_j)| d\xi_1 \cdots d\xi_6 dt' \\
&\les \sum_{\substack{N_1, \dots, N_6, N_{456} \geq 1 \\ \text{dyadic}}} \frac{N^{3s} N_1^{-2s} N_2^{-s} }{(N + N^{(3)}) (N + N^{(1)})^{\al - 1} } \int_{t_0}^t \int_{\G_6} \prod_{j = 1}^6 |\ft{v_{N_j}} (t', \xi_j)| d\xi_1 \cdots d\xi_6 dt' \\
&\les \sum_{\substack{N_1, \dots, N_6, N_{456} \geq 1 \\ \text{dyadic}}} \frac{N^{3s} N_1^{-2s} N_2^{-s}}{(N + N^{(3)}) (N + N^{(1)})^{\al - 1}}  \| v_{N_1} v_{N_4} \|_{L^2_J L^2_x} \| v_{N_2} v_{N_5} \|_{L^2_J L^2_x} \| \ft{v_{N_3}} \|_{L^\infty_J L_\xi^1} \| \ft{v_{N_6}} \|_{L^\infty_J L_\xi^1} \\
&\les \sum_{\substack{N_1, \dots, N_6, N_{456} \geq 1 \\ \text{dyadic}}} \frac{N^{3s} N_1^{\frac{1 - \al}{2} - 2s} N_2^{\frac{1 - \al}{2} - s} N_3^{\frac 12} N_6^{\frac 12}}{(N + N^{(3)}) (N + N^{(1)})^{\al - 1}} \prod_{j = 1}^6 \| v_{N_j} \|_{X_J^{0, \frac 12 + \eps}} \\
&\les \sum_{\substack{N_1, \dots, N_6, N_{456} \geq 1 \\ \text{dyadic}}} \frac{N^{3s}}{(N + N^{(1)})^{2 \al - 2 + 3s}} \| v \|_{X_J^{0, \frac 12 + \eps}}^6 \\
&\les N^{-2 \al + 2 + \dl} \| v \|_{X_J^{0, \frac 12 + \eps}}^6,
\end{align*}

\noi
where we used $N_3 \les N^{(3)}$, $N_6 \ll N$, $N_1 \sim N_2 \sim N^{(1)} \sim N + N^{(1)}$, and $2\al - 2 + 3s > 0$ given $\frac{2 - \al}{4} \leq s < 0$ and $\al > 2$. This is the desired bound.

\medskip \noi
\textbf{Case 2:} $N_4 \ges N \gg N_5 \ges N_6$. In this case, we have $N_1 \ges N_4$ since two largest frequencies among $N_1, \dots, N_6$ are comparable, so that $N_1 \ges N \gg N_5 \ges N_6$, $m(N_5) = m(N_6) = 1$, $m(N_2)^{-1} \les N^s N_1^{-s}$, $m(N_3)^{-1} \les N^s N_1^{-s}$, and $N_1 \sim N^{(1)} \sim N_{\text{max}}$.
Using similar steps as in Case 1, we obtain
\begin{align*}
\int_{t_0}^t &\int_{\G_6} |\s_4' (\xi_1, \xi_2, \xi_3, \xi_{456})| \prod_{j = 1}^6 |\ft{v_{N_j}} (t', \xi_j)| d\xi_1 \cdots d\xi_6 dt' \\
&\les \sum_{\substack{N_1, \dots, N_6, N_{456} \geq 1 \\ \text{dyadic}}} \frac{N^{4s} N_1^{-3s} N_4^{-s}}{(N + N^{(3)}) (N + N^{(1)})^{\al - 1} } \| v_{N_1} v_{N_5} \|_{L_J^2 L_x^2} \| v_{N_4} v_{N_6} \|_{L_J^2 L_x^2} \| \ft{v_{N_2}} \|_{L^\infty_J L^1_\xi} \| \ft{v_{N_3}} \|_{L^\infty_J L^1_\xi} \\
&\les \sum_{\substack{N_1, \dots, N_6, N_{456} \geq 1 \\ \text{dyadic}}} \frac{N^{4s} N_1^{\frac{1 - \al}{2} - 3s} N_2^{\frac 12} N_3^{\frac 12} N_4^{\frac{1 - \al}{2} - s}}{(N + N^{(3)}) (N + N^{(1)})^{\al - 1}} \prod_{j = 1}^6 \| v_{N_j} \|_{X^{0, \frac 12 + \eps}_J} \\
&\les \sum_{\substack{N_1, \dots, N_6, N_{456} \geq 1 \\ \text{dyadic}}} \frac{N^{\frac{1 - \al}{2} + 3s} }{(N + N^{(3)})^{\frac 12} (N + N^{(1)})^{\frac 32 \al - 2 + 3s}} \| v \|_{X_J^{0, \frac 12 + \eps}}^6 \\
&\les N^{-2\al + 2 + \dl} \| v \|_{X_J^{0, \frac 12 + \eps}}^6,
\end{align*}

\noi
where we used $N_3 \les N^{(3)}$, $N_4 \ges N$, $N_1 \sim N^{(1)} \sim N + N^{(1)}$, $\frac{1 - \al}{2} - s < 0$, and $\frac 32 \al - 2 + 3s > 0$ (given $\frac{2 - \al}{4} \leq s < 0$ and $\al > 2$). This is the desired bound.

\medskip \noi
\textbf{Case 3:} $N_4 \geq N_5 \ges N \gg N_6$. In this case, we have $m(N_6) = 1$. 

\medskip \noi
\textbf{Subcase 3.1:} $N_1 \ges N_4$. In this subcase, we have $N_1 \ges N$, $m(N_2)^{-1} \les N^s N_1^{-s}$, $m(N_3)^{-1} \les N^s N_1^{-s}$, and $N_1 \sim N^{(1)} \sim N_{\text{max}}$. Using Young's inequality, Plancherel's theorem, H\"older's inequality, the bilinear Strichartz estimate \eqref{bistr}, Bernstein's inequality, and the Strichartz estimate~\eqref{str1}, we obtain
\begin{align*}
\int_{t_0}^t &\int_{\G_6} |\s_4' (\xi_1, \xi_2, \xi_3, \xi_{456})| \prod_{j = 1}^6 |\ft{v_{N_j}} (t', \xi_j)| d\xi_1 \cdots d\xi_6 dt' \\
&\les \sum_{\substack{N_1, \dots, N_6, N_{456} \geq 1 \\ \text{dyadic}}} \frac{N^{5s} N_1^{-3s} N_4^{-s} N_5^{-s}}{(N + N^{(3)}) (N + N^{(1)})^{\al - 1} } \| v_{N_1} v_{N_6} \|_{L_J^2 L_x^2} \| v_{N_2} v_{N_3} v_{N_4} v_{N_5} \|_{L_J^2 L_x^2} \\
&\les \sum_{\substack{N_1, \dots, N_6, N_{456} \geq 1 \\ \text{dyadic}}} \frac{N^{5s} N_1^{-3s} N_4^{-s} N_5^{-s}}{(N + N^{(3)}) (N + N^{(1)})^{\al - 1} } \\
&\quad \times \| v_{N_1} v_{N_6} \|_{L_J^2 L_x^2} \| v_{N_2} \|_{L_J^\infty L_x^2} \| v_{N_3} \|_{L_J^\infty L_x^\infty} \| v_{N_4} \|_{L_J^4 L_x^\infty} \| v_{N_5} \|_{L_J^4 L_x^\infty} \\
&\les \sum_{\substack{N_1, \dots, N_6, N_{456} \geq 1 \\ \text{dyadic}}} \frac{N^{5s} N_1^{\frac{1 - \al}{2} - 3s} N_3^{\frac 12} N_4^{\frac{2 - \al}{4} - s} N_5^{\frac{2 - \al}{4} - s}}{(N + N^{(3)}) (N + N^{(1)})^{\al - 1} } \prod_{j = 1}^6 \| v_{N_j} \|_{X^{0, \frac 12 + \eps}_J} \\
&\les \sum_{\substack{N_1, \dots, N_6, N_{456} \geq 1 \\ \text{dyadic}}} \frac{N^{\frac{2 - \al}{2} + 3s}}{(N + N^{(3)})^{\frac 12} (N + N^{(1)})^{\frac 32 \al - \frac 32 + 3s}} \| v \|_{X_J^{0, \frac 12 + \eps}}^6 \\
&\les N^{-2 \al + 2 + \dl} \| v \|_{X_J^{0, \frac 12 + \eps}}^6,
\end{align*}

\noi
where we used $N_3 \les N^{(3)}$, $N_4 \ges N$, $N_5 \ges N$, $N_1 \sim N^{(1)} \sim N + N^{(1)}$, $\frac{2 - \al}{4} - s < 0$, and $\frac 32 \al - \frac 32 + 3s > 0$ (given $\frac{2 - \al}{4} \leq s < 0$ and $\al > 2$). This is the desired bound.

\medskip \noi
\textbf{Subcase 3.2:} $N_1 \ll N_4$. In this subcase, since two largest frequencies are comparable, we have $N_4 \sim N_5 \sim N_{\text{max}}$, so that $N_5 \gg N_2$, $m(N_1)^{-1} \les N^s (N + N^{(1)})^{-s}$, $m(N_2)^{-1} \les N^s N_4^{-s}$, $m(N_3)^{-1} \les N^s N_4^{-s}$. Using similar steps as in Case 1, we obtain
\begin{align*}
\int_{t_0}^t &\int_{\G_6} |\s_4' (\xi_1, \xi_2, \xi_3, \xi_{456})| \prod_{j = 1}^6 |\ft{v_{N_j}} (t', \xi_j)| d\xi_1 \cdots d\xi_6 dt' \\
&\les \sum_{\substack{N_1, \dots, N_6, N_{456} \geq 1 \\ \text{dyadic}}} \frac{N^{5s} N_4^{-3s} N_5^{-s}}{(N + N^{(3)}) (N + N^{(1)})^{\al - 1 + s} } \\
&\quad \times \| v_{N_1} v_{N_4} \|_{L_J^2 L_x^2} \| v_{N_2} v_{N_5} \|_{L_J^2 L_x^2} \| \ft{v_{N_3}} \|_{L^\infty_J L^1_\xi} \| \ft{v_{N_6}} \|_{L^\infty_J L^1_\xi} \\
&\les \sum_{\substack{N_1, \dots, N_6, N_{456} \geq 1 \\ \text{dyadic}}} \frac{N^{5s} N_3^{\frac 12} N_4^{\frac{1 - \al}{2} - 3s} N_5^{\frac{1 - \al}{2} - s} N_6^{\frac 12}}{(N + N^{(3)}) (N + N^{(1)})^{\al - 1 + s} } \prod_{j = 1}^6 \| v_{N_j} \|_{X^{0, \frac 12 + \eps}_J} \\
&\les \sum_{\substack{N_1, \dots, N_6, N_{456} \geq 1 \\ \text{dyadic}}} \frac{N^{5s} N_{\text{max}}^{1 - \al - 4s}}{(N + N^{(1)})^{\al - 1 + s}} \| v \|_{X^{0, \frac 12 + \eps}_J}^6 \\
&\les N^{-2 \al + 2 + \dl} \| v \|_{X^{0, \frac 12 + \eps}_J}^6,
\end{align*}

\noi
where we used $N_3 \les N^{(3)}$, $N_6 \ll N$, $N_{\text{max}} \ges N$, $1 - \al - 4s < 0$, and $\al - 1 + s > 0$ (given $\frac{2 - \al}{4} \leq s < 0$ and $\al > 2$). This is the desired bound.

\medskip \noi
\textbf{Case 4:} $N_4 \geq N_5 \geq N_6 \ges N$.

\medskip \noi
\textbf{Subcase 4.1:} $N_1 \ges N_4$. In this subcase, we have $N_1 \ges N$, $m(N_2)^{-1} \les N^s N_1^{-s}$, $m(N_3)^{-1} \les N^s N_1^{-s}$, and $N_1 \sim N^{(1)} \sim N_{\text{max}}$. Using Young's inequality, Plancherel's theorem, H\"older's inequality, and the Strichartz estimate \eqref{str1}, we obtain
\begin{align*}
\int_{t_0}^t &\int_{\G_6} |\s_4' (\xi_1, \xi_2, \xi_3, \xi_{456})| \prod_{j = 1}^6 |\ft{v_{N_j}} (t', \xi_j)| d\xi_1 \cdots d\xi_6 dt' \\
&\les \sum_{\substack{N_1, \dots, N_6, N_{456} \geq 1 \\ \text{dyadic}}} \frac{N^{6s} N_1^{-3s} N_4^{-s} N_5^{-s} N_6^{-s}}{(N + N^{(3)}) (N + N^{(1)})^{\al - 1} } \| v_{N_1} v_{N_2} \|_{L_J^4 L_x^2} \| v_{N_3} v_{N_4} v_{N_5} v_{N_6} \|_{L_J^{\frac 43} L_x^2} \\ 
&\les \sum_{\substack{N_1, \dots, N_6, N_{456} \geq 1 \\ \text{dyadic}}} \frac{N^{6s} N_1^{-3s} N_4^{-s} N_5^{-s} N_6^{-s}}{(N + N^{(3)}) (N + N^{(1)})^{\al - 1} } \\
&\quad \times \| v_{N_1} \|_{L_J^4 L_x^\infty} \| v_{N_2} \|_{L_J^\infty L_x^2} \| v_{N_3} \|_{L_J^\infty L_x^2} \| v_{N_4} \|_{L_J^4 L_x^\infty} \| v_{N_5} \|_{L_J^4 L_x^\infty} \| v_{N_6} \|_{L_J^4 L_x^\infty} \\
&\les \sum_{\substack{N_1, \dots, N_6, N_{456} \geq 1 \\ \text{dyadic}}} \frac{N^{6s} N_1^{\frac{2 - \al}{4} - 3s} N_4^{\frac{2 - \al}{4} - s} N_5^{\frac{2 - \al}{4} - s} N_6^{\frac{2 - \al}{4} - s}}{(N + N^{(3)}) (N + N^{(1)})^{\al - 1} } \prod_{j = 1}^6 \| v_{N_j} \|_{X^{0, \frac 12 + \eps}_J} \\
&\les \sum_{\substack{N_1, \dots, N_6, N_{456} \geq 1 \\ \text{dyadic}}} \frac{N^{\frac{6 - 3\al}{4} + 3s}}{(N + N^{(3)}) (N + N^{(1)})^{\frac 54 \al - \frac 32 + 3s} } \| v \|_{X^{0, \frac 12 + \eps}_J}^6 \\
&\les N^{-2 \al + 2 + \dl} \| v \|_{X^{0, \frac 12 + \eps}_J}^6,
\end{align*}

\noi
where we used $N_4 \ges N$, $N_5 \ges N$, $N_6 \ges N$, $N_1 \sim N^{(1)} \sim N + N^{(1)}$, $\frac{2 - \al}{4} - s$, and $\frac 54 \al - \frac 32 + 3s > 0$ (given $\frac{2 - \al}{4} \leq s < 0$ and $\al > 2$). This is the desired bound.

\medskip \noi
\textbf{Subcase 4.2:} $N_1 \ll N_4$. In this subcase, since two largest frequencies are comparable, we have $N_4 \sim N_5 \sim N_{\text{max}}$, so that $N_5 \gg N_2$, $m(N_1)^{-1} \les N^s (N + N^{(1)})^{-s}$, $m(N_2)^{-1} \les N^s (N + N^{(1)})^{-s}$, $m(N_3)^{-1} \les N^s (N + N^{(1)})^{-s}$. Using similar steps as in Case 1, we obtain
\begin{align*}
\int_{t_0}^t &\int_{\G_6} |\s_4' (\xi_1, \xi_2, \xi_3, \xi_{456})| \prod_{j = 1}^6 |\ft{v_{N_j}} (t', \xi_j)| d\xi_1 \cdots d\xi_6 dt' \\
&\les \sum_{\substack{N_1, \dots, N_6, N_{456} \geq 1 \\ \text{dyadic}}} \frac{N^{6s} N_4^{-s} N_5^{-s} N_6^{-s}}{(N + N^{(3)}) (N + N^{(1)})^{\al - 1 + 3s} } \\
&\quad \times \| v_{N_1} v_{N_4} \|_{L_J^2 L_x^2} \| v_{N_2} v_{N_5} \|_{L_J^2 L_x^2} \| \ft{v_{N_3}} \|_{L^\infty_J L^1_\xi} \| \ft{v_{N_6}} \|_{L^\infty_J L^1_\xi} \\
&\les \sum_{\substack{N_1, \dots, N_6, N_{456} \geq 1 \\ \text{dyadic}}} \frac{N^{6s} N_3^{\frac 12} N_4^{\frac{1 - \al}{2} - s} N_5^{\frac{1 - \al}{2} - s} N_6^{\frac 12 - s}}{(N + N^{(3)}) (N + N^{(1)})^{\al - 1 + 3s} } \prod_{j = 1}^6 \| v_{N_j} \|_{X^{0, \frac 12 + \eps}_J} \\
&\les \sum_{\substack{N_1, \dots, N_6, N_{456} \geq 1 \\ \text{dyadic}}} \frac{N^{6s} N_{\text{max}}^{\frac 32 - \al - 3s}}{(N + N^{(3)})^{\frac 12} (N + N^{(1)})^{\al - 1 + 3s}} \| v \|_{X^{0, \frac 12 + \eps}_J}^6 \\
&\les N^{-2 \al + 2 + \dl} \| v \|_{X^{0, \frac 12 + \eps}_J}^6,
\end{align*}

\noi
where we used $N_3 \les N^{(3)}$, $N_{\text{max}} \ges N$, $\frac 32 - \al - 3s < 0$, and $\al - 1 + 3s > 0$ (given $\frac{2 - \al}{4} \leq s < 0$ and $\al > 2$). This is the desired bound.
\end{proof}


We are now ready to prove Theorem \ref{TM:GWP1}.
\begin{proof}[Proof of Theorem \ref{TM:GWP1}]
For simplicity, we only consider well-posedness on the time interval $[0, T]$, and the argument for $[-T, 0]$ is the same. By the continuous dependence of the solution on initial data, we know that Lemma \ref{LEM:Mdiff} and Proposition \ref{PROP:aclaw} work for a general solution $u$ of \eqref{fnls1} with initial data $u_0 \in H^s (\R)$.

We first perform a scaling argument. Note that for any $\ld > 1$, if $u$ is a solution to \eqref{fnls1} with initial data $u_0$, then $u^\ld (t, x) = \ld^{- \frac{\al}{2}} u (\ld^{- \al} t, \ld^{-1} x)$ is also a solution to \eqref{fnls1} with initial data $u_{0}^\ld (x) = \ld^{- \frac{\al}{2}} u_0 (\ld^{-1} x)$. A straightforward computation yields
\begin{align*}
\| I u_{0}^\ld \|_{L^2} \les \ld^{\frac{1 - \al}{2} - s} N^{-s} \| u_0 \|_{H^s}.
\end{align*}

\noi
Note that $\frac{1 - \al}{2} - s < 0$ given $\frac{2 - \al}{4} \leq s < 0$. We will choose the parameter $N = N(T) \gg 1$ later, but we can take $\ld = \ld (N, \| u_0 \|_{H^s})$ large enough such that
\begin{align*}
\| I u_{0}^\ld \|_{L^2} < \eta_0 \leq \frac{\eta}{2}
\end{align*} 

\noi
with $\eta > 0$ be as given in Proposition \ref{PROP:ILWP1}, which can be satisfied by taking $\ld \sim N^{\frac{2s}{1 - \al - 2s}}$.  

Our goal is to construct the solution $u^\ld$ on $[0, \ld^\al T]$. By Proposition \ref{PROP:ILWP1}, we know that $u^\ld$ exists on $[0, 1]$ and that
\begin{align}
M (I u^\ld (t)) \leq C_0 \| Iu^\ld \|_{X^{0, \frac 12 + \eps}_{[0, 1]}}^2 \leq 4C_0 \eta_0^2
\label{gwpR-0}
\end{align}

\noi
for $t \in [0, 1]$ and some constant $C_0 > 1$. By Lemma \ref{LEM:Mdiff} along with \eqref{gwpR-0}, we have
\begin{align}
M^4 (Iu^\ld (t)) = M(Iu^\ld (t)) + \mathcal{O} (\eta_0^4)
\label{gwpR-1}
\end{align}

\noi
for $t \in [0, 1]$. By Proposition \ref{PROP:aclaw}, we have
\begin{align}
M^4 (Iu^\ld (t)) \leq M^4 (Iu^\ld (0)) + C_1 N^{-2 \al + 2 + \dl} \eta_0^6
\label{gwpR-2} 
\end{align}

\noi
for some $\dl > 0$ arbitrarily small and some constant $C_1 > 0$ independent of $N$. Thus, by \eqref{i-smoothing}, \eqref{gwpR-1}, and \eqref{gwpR-2}, we have
\begin{align}
\begin{split}
\| u^\ld (1) \|_{H^s}^2 &\leq M( I u^\ld (1)) \\ 
&= M^4 ( I u^\ld (1)) + \mathcal{O} ( \eta_0^4 ) \\
&\leq M^4 (Iu^\ld (0)) + C_1 N^{-2 \al + 2 + \dl} \eta_0^6 + \mathcal{O} ( \eta_0^4 ) \\
&\leq \eta_0^2 + C_1 N^{-2 \al + 2 + \dl} \eta_0^6 + \mathcal{O} (\eta_0^4) \\
&\leq 4 \eta_0^2 \leq \eta^2.
\end{split}
\label{gwpR-3}
\end{align}

\noi
Thus, we can use Proposition \ref{PROP:ILWP1} again to extend the solution $u^\ld$ to the time interval $[0, 2]$.

By iterating the above procedure $k$ times, we obtain
\begin{align*}
M^4 (I u^\ld (t)) \leq M^4 (I u^\ld (0)) + (k + 1) C_1 N^{-2 \al + 2 + \dl} \eta_0^6
\end{align*}

\noi
for $t \in [0, k + 1]$. Thus, as long as $(k + 1) N^{-2 \al + 2 + \dl} \sim 1$, we can repeat \eqref{gwpR-3} to obtain
\begin{align*}
\| u^\ld (k + 1) \|_{H^s}^2 &\leq M^4 (I u^\ld (k + 1)) + \mathcal{O} (\eta_0^4) \\
&\leq M^4 (I u^\ld (0)) + (k + 1) C_1 N^{-2 \al + 2 + \dl} \eta_0^6 + \mathcal{O} (\eta_0^4) \\
&\leq \eta_0^2 + (k + 1) C_1 N^{-2 \al + 2 + \dl} \eta_0^6 + \mathcal{O} (\eta_0^4) \\
&\leq 4 \eta_0^2.
\end{align*}

\noi
By taking $k \sim N^{2 \al - 2 -\dl}$, we extend the solution $u^\ld$ to the time interval $[0, N^{2 \al - 2 -\dl}]$. It remains to take
\begin{align*}
N^{2 \al - 2 -\dl} \sim \ld^\al T \sim N^{\frac{2\al s}{1 - \al - 2s}} T,
\end{align*}

\noi
which is equivalent to
\begin{align}
N^{2\al - 2 - \dl + \frac{2 \al s}{\al - 1 + 2s}} \sim T.
\label{NT}
\end{align}

\noi
This can be achieved by choosing $N \gg 1$ large enough, since $\dl > 0$ can be arbitrarily small and a direct computation yields $2 \al - 2 + \frac{2 \al s}{\al - 1 + 2s} > 0$, given $\frac{2 - \al}{4} \leq s < 0$ and $\al > 2$. Thus, we finish the proof of Theorem \ref{TM:GWP1}.
\end{proof}

\begin{remark} \rm
\label{RMK:sob}
From the proof of Theorem \ref{TM:GWP1}, we can establish a growth bound of the Sobolev norm of the solution $u$ to the cubic fNLS \eqref{fnls1}. By rescaling, for $T > 0$, we have
\begin{align}
\sup_{t \in [0, T]} \| u (t) \|_{H^s} \sim \ld^{\frac{\al - 1}{2} + s} \sup_{t \in [0, \ld^\al T]} \| u^\ld (t) \|_{H^s} \leq \ld^{\frac{\al - 1}{2} + s} \sup_{t \in [0, \ld^\al T]} \| I u^\ld (t) \|_{L^2}
\label{sob1}
\end{align}

\noi
and
\begin{align}
\| I u_0^\ld \|_{L^2} \les N^{-s} \| u_0^\ld \|_{H^s} \sim N^{-s} \ld^{\frac{1 - \al}{2} - s} \| u_0 \|_{H^s}.
\label{sob2}
\end{align}

\noi
From the proof of Theorem \ref{TM:GWP1} above, we have
\begin{align}
\sup_{t \in [0, \ld^\al T]} \| I u^\ld (t) \|_{L^2} \les \| I u_0^\ld \|_{L^2}.
\label{sob3}
\end{align}

\noi
Thus, combining \eqref{sob1}, \eqref{sob2}, and \eqref{sob3}, we obtain
\begin{align*}
\sup_{t \in [0, T]} \| u (t) \|_{H^s} \les N^{-s} \| u_0 \|_{H^s}. 
\end{align*}

\noi
By \eqref{NT}, we obtain
\begin{align*}
\sup_{t \in [0, T]} \| u (t) \|_{H^s} \les T^\be \| u_0 \|_{H^s},
\end{align*}

\noi
where
\begin{align*}
\be = \frac{-s (\al - 1 + 2s)}{(2 \al - 2 - \dl) (\al - 1 + 2s) + 2 \al s}
\end{align*}

\noi
for $\dl > 0$ arbitrarily small. In particular, when $\al = 4$ and $s = -\frac 12$, we have $\be = \frac{1}{8 - 2\dl}$, which is better than the growth of the Sobolev norm established in \cite{kihoon}.
\end{remark}

\section{Well-posedness on the circle}
\label{SEC:WPT}
In this section, we study low regularity well-posedness of the cubic fNLS \eqref{fnls1} on the circle (i.e.~$ \M = \T $).
We recall from Subsection \ref{SUBSEC:introT} that we mainly work with well-posedness of the equation \eqref{fnls3} for $w$.

\subsection{Strichartz estimates on the circle}
\label{SEC:lines2}
In this subsection, we establish some Strichartz estimates, which will be useful for proving well-posedness of the cubic fNLS \eqref{fnls3}. We begin by showing the following $L^4$-Strichartz estimates.

\begin{lemma}
\label{LEM:L4str}
Let $\al > 2$ and $b \geq \frac{\al + 1}{4\al}$. Then, we have
\begin{align}
\| u \|_{L^4_t L_x^4 (\R \times \T)} \les \| u \|_{X^{0, b}}.
\label{L4X}
\end{align}

\noi
Furthermore, by letting $u_0 \in H^s (\T)$ with $s > \frac{1 - \al}{2}$, we have
\begin{align}
\| u \|_{L^4_t L_x^4 (\R \times \T)} \les \| u \|_{Y^{0, b}}.
\label{L4Y}
\end{align}
\end{lemma}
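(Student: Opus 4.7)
The plan is to prove \eqref{L4X} via Bourgain's Fourier restriction norm technique, reducing matters to a counting problem on the circle. Writing $\hat u(\tau,n) = \jb{\tau+|n|^{\al}}^{-b} a(\tau,n)$, so that $\|a\|_{L^2_{\tau}\ell^2_n} = \|u\|_{X^{0,b}}$, I would express
\[
\|u\|_{L^4_{t,x}}^{2} = \|u\bar u\|_{L^2_{t,x}}
\]
via Plancherel's theorem on $\R \times \T$, write the space-time Fourier transform of $u\bar u$ as a discrete convolution, and apply Cauchy--Schwarz in $(\tau_1,m)$. This reduces the desired estimate to the uniform bound
\[
K(\tau,n) := \sum_{m\in\Z}\int_{\R}\frac{d\tau_1}{\jb{\tau_1+|m|^{\al}}^{2b}\jb{\tau_1-\tau+|m-n|^{\al}}^{2b}} \les 1,
\]
with implicit constant uniform in $(\tau,n)\in\R\times\Z$. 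The $\tau_1$-integral is then estimated by a standard one-dimensional calculus lemma; the hypothesis $b \ge \frac{\al+1}{4\al} > \frac{1}{4}$ (using $\al > 2$) ensures $4b>1$, yielding a bound of the form $\jb{\tau + |m|^{\al}-|m-n|^{\al}}^{-\min(2b,\,4b-1)}$.

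The main step is then to establish, for the appropriate exponent $\rho$ depending on $b$,
\[
\sum_{m\in\Z}\jb{\tau + \psi_n(m)}^{-\rho} \les 1, \qquad \psi_n(m):=|m|^{\al}-|m-n|^{\al},
\]
uniformly in $\tau,n$. I would decompose this sum dyadically and apply the counting lemma (Lemma \ref{LEM:count}) on the maximal intervals on which $\psi_n$ is monotonic. The derivative
\[
\psi_n'(m) = \al\,\mathrm{sgn}(m)|m|^{\al-1} - \al\,\mathrm{sgn}(m-n)|m-n|^{\al-1}
\]
admits a lower bound of the form $\max(|m|,|m-n|)^{\al-1}$ away from its zeros, so Lemma \ref{LEM:count} gives a count of the form $L\,\max(|m|,|m-n|)^{-(\al-1)} + 1$ at dyadic level $L$. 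Balancing the contributions from intervals where $m$ is comparable to $n$ (the near-critical regime, which dominates) against the far-from-critical regime, and summing in $L$, produces precisely the threshold $b \ge \frac{\al+1}{4\al}$ as the critical exponent. I expect this counting step, in particular the careful matching of contributions near $m \sim n/2$ with the power of $L$ coming from the calculus lemma, to be the main technical obstacle.

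For the $Y^{0,b}$ estimate \eqref{L4Y}, the modulation $\jb{\tau+|n|^{\al}}^b$ is replaced by $\jb{\tau+\mu(n)}^b$ with $\mu(n)=|n|^{\al}-|\widehat{u_0}(n)|^2$. Under the hypothesis $s > \frac{1-\al}{2}$, one has $|\widehat{u_0}(n)|^{2} \les \jb{n}^{-2s}$ with $-2s<\al-1$, so the perturbation is of strictly lower order than $|n|^{\al}$. Consequently, the perturbed phase $\psi^{\mu}_{n}(m) := \mu(m)-\mu(m-n)$ admits, in a discrete-difference sense, the same lower bounds as $\psi_n$ up to lower-order corrections; the handful of low-frequency terms where the perturbation is not yet dominated contribute only an absorbable constant. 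Thus the counting argument above extends essentially verbatim, yielding \eqref{L4Y} with no new ideas required beyond tracking this perturbation carefully.
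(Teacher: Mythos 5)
Your overall strategy (reduce via Plancherel and Cauchy--Schwarz to a counting problem and apply Lemma \ref{LEM:count}) is the same as the paper's, but the specific reduction you set up breaks down. Requiring $K(\tau,n)\les 1$ uniformly amounts, after the $\tau_1$-integration, to the uniform bound $\sum_{m\in\Z}\jb{\tau+|m|^{\al}-|m-n|^{\al}}^{-(4b-1)}\les 1$, and this is false: at $n=0$ the phase $\psi_0$ vanishes identically, so the summand $\jb{\tau}^{-(4b-1)}$ is independent of $m$ and the sum diverges. More generally, your claimed derivative bound $|\psi_n'(m)|\gtrsim\max(|m|,|m-n|)^{\al-1}$ holds only when $m$ and $m-n$ have opposite signs; when they have the same sign one only has $|\psi_n'(m)|\sim |n|\,\max(|m|,|m-n|)^{\al-2}$, and $\psi_n(m)\approx \al n\,\mathrm{sgn}(m)|m|^{\al-1}$ there. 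Carrying out the dyadic count in that region, the block $|m|\sim M\gg|n|$ contributes about $M\,(|n|M^{\al-1})^{1-4b}$, whose sum over dyadic $M$ converges only for $b>\frac{\al}{4(\al-1)}$, which is strictly larger than $\frac{\al+1}{4\al}$. So the ``far-from-critical regime'' is not harmless --- it is exactly where your reduction loses the endpoint (and at $n=0$ it fails outright for every $b$).

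The missing ingredient is the frequency-sign decomposition from \cite[Lemma 2.1]{TT} and \cite[Lemma 2.2]{kwak}, which is what the paper invokes before arriving at its displayed sum. One splits $u$ into positive- and negative-frequency parts $P_{+}u, P_{-}u$ and treats each product separately: the opposite-sign products $P_{\pm}u\,\overline{P_{\mp}u}$ lead to the phase $\tau-|n_1|^{\al}+|n-n_1|^{\al}$ restricted to $n_1\ge 0$, $n-n_1\ge0$, where $|g_1'(\xi)|=\al\big(\xi^{\al-1}+(n-\xi)^{\al-1}\big)\gtrsim n^{\al-1}$ has no degeneracy --- this is precisely the sum the paper bounds, and optimizing the dyadic cutoff $K$ there produces the threshold $b\ge\frac{\al+1}{4\al}$; the same-sign products are rewritten using $\|P_{\pm}u\,\overline{P_{\pm}u}\|_{L^2}=\|(P_{\pm}u)^2\|_{L^2}$, so that the relevant phase becomes the \emph{sum} $|m|^{\al}+|n-m|^{\al}$ rather than the difference, whose level sets are genuinely small. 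Your treatment of the $Y^{0,b}$ variant (bounding $|\ft{u_0}(n)|^{2}\les\jb{n}^{-2s}$ and using $-2s<\al-1$ to absorb the perturbation into the derivative lower bound) coincides with the paper's and is fine once the reduction is repaired.
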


\begin{proof}
Let us only consider the case $b \leq \frac 12$, since the case $b > \frac 12$ will then follow directly. We first consider \eqref{L4X}. By following the same steps as in \cite[Lemma 2.1]{TT} or \cite[Lemma 2.2]{kwak}, it suffices to show the following bound
\begin{align}
\sum_{\substack{n_1 \geq 0 \\ n - n_1 \geq 0}} \jb{\tau - |n_1|^\al + |n - n_1|^\al}^{1 - 4b} < C
\label{sum_n1}
\end{align}

\noi
for some constant $C > 0$ uniformly in $\tau \in \R$ and $n \in \Z$. Without loss of generality, we can assume that $n_1 \geq n - n_1$. 

Let us define the function $g_1$ on $\R$ by
\begin{align*}
g_1 (\xi) = \tau - |\xi|^\al + |n - \xi|^\al.
\end{align*}

\noi
Note that we have $|g_1'(\xi)| \ges n^{\al - 1}$ if $\xi \geq n - \xi \geq 0$. By Lemma \ref{LEM:count}, for any $N > 0$, we have
\begin{align}
\# \{ n_1 \in \Z_+ \cup \{0\} : n_1 \geq n - n_1 \geq 0, |g_1 (n_1)| \leq N \} \les \frac{N}{n^{\al - 1}} + 1.
\label{count_n1}
\end{align}

Let us go back to \eqref{sum_n1}. Let $K \in \N$ be chosen later. By \eqref{count_n1} and the fact that $\frac 14 < b \leq \frac 12$, we can compute that
\begin{align*}
&\sum_{n_1 \geq n - n_1 \geq 0} \jb{\tau - |n_1|^\al + |n - n_1|^\al}^{1 - 4b} \\
&= \sum_{\substack{n_1 \geq n - n_1 \geq 0 \\ |g_1 (n_1)| \leq 2}} \jb{g_1 (n_1)}^{1 - 4b} + \sum_{k = 1}^K \sum_{\substack{n_1 \geq n - n_1 \geq 0 \\ 2^k < |g_1 (n_1)| \leq 2^{k + 1}}} \jb{g_1 (n_1)}^{1 - 4b} + \sum_{\substack{n_1 \geq n - n_1 \geq 0 \\ |g_1 (n_1)| > 2^{K + 1}}} \jb{g_1 (n_1)}^{1 - 4b} \\
&\les \sum_{k = 0}^K \Big( \frac{2^{k + 1}}{n^{\al - 1}} + 1 \Big) 2^{k (1 - 4b)} + n 2^{K (1 - 4b)} \\
&\les \frac{2^{2K (1 - 2b)}}{n^{\al - 1}} + 1 + n 2^{K (1 - 4b)}.
\end{align*}

\noi
We now choose $K$ such that
\begin{align*}
\frac{2^{2K (1 - 2b)}}{n^{\al - 1}} \sim 1,
\end{align*}

\noi
which gives $2^K \sim n^{\frac{\al - 1}{2 - 4b}}$. Thus, we obtain
\begin{align*}
\sum_{n_1 \geq n - n_1 \geq 0} \jb{\tau - |n_1|^\al + |n - n_1|^\al}^{1 - 4b} \les 1 + n^{1 + \frac{(\al - 1)(1 - 4b)}{2 - 4b}},
\end{align*}

\noi
which is bounded by an absolute constant if $1 + \frac{(\al - 1)(1 - 4b)}{2 - 4b} \leq 0$ or, equivalently, $b \geq \frac{\al + 1}{4\al}$. This implies \eqref{L4X}.

For \eqref{L4Y}, instead of \eqref{sum_n1}, we need to show
\begin{align*}
\sum_{\substack{n_1 \geq 0 \\ n - n_1 \geq 0}} \jb{\tau - \mu (n_1) + \mu (n - n_1)}^{1 - 4b} < C,
\end{align*}

\noi
where we recall that $\mu (n) = |n|^\al - |\ft{u_0} (n)|^2$. Here, we allow $C$ to depend on $\| u_0 \|_{H^s}$ but independent of $\tau$ and $n$. Thus, we only need to consider the case when $n \gg \| u_0 \|_{H^s}^{\frac{2}{\al - 1 + 2s}}$. Note that
\begin{align*}
|\ft{u_0} (n)|^2 \leq \jb{n}^{-2s} \| u_0 \|_{H^s (\T)}^2,
\end{align*}

\noi
so that we can construct a $C^1$ function $h$ on $\R$ in such a way that $h(n) = |\ft{u_0} (n)|^2$ for all $n \in \Z$ and $h'(\xi) \leq \jb{\xi}^{-2s} \| u_0 \|_{H^s (\T)}^2$ for all $\xi \in \R$. Thus, since $-2s < \al - 1$ and $n \gg \| u_0 \|_{H^s (\T)}^{\frac{2}{\al - 1 + 2s}}$, by defining
\begin{align*}
g_2 (\xi) = \tau - \mu(\xi) + \mu (n - \xi),
\end{align*}

\noi
we have $|g_2' (\xi)| \ges n^{\al - 1}$ if $\xi \geq n - \xi \geq 0$. The rest of the steps follow from exactly the same way as the steps for showing \eqref{sum_n1}.
\end{proof}

We now show the following $L^6$-Strichartz estimate.
\begin{lemma}
\label{LEM:L6str}
Let $\al > 2$, $\eps > 0$, and $I \subset \R$ be an interval of unit length. Then, we have
\begin{align}
\| u \|_{L_I^6 L_x^6} \les \| u \|_{X_I^{\eps, \frac 12 - \eta}}
\label{L6}
\end{align}

\noi
for some $\eta > 0$ sufficiently small.
\end{lemma}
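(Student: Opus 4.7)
The plan is to combine the Bourgain--Demeter $\ell^2$ decoupling theorem with an interpolation argument on the $X^{s,b}$ scale. For each dyadic frequency scale $N \geq 1$, I would first establish the linear Strichartz estimate
\begin{align*}
\| e^{-itD^\al} \pi_N f \|_{L^6([0,1] \times \T)} \lesssim N^{\eps_0} \| \pi_N f \|_{L^2(\T)},
\end{align*}
valid for any $\eps_0 > 0$. This follows from the $\ell^2$ decoupling theorem of Bourgain--Demeter \cite{BD} applied to the curve $(\xi, |\xi|^\al)$, whose curvature $\al(\al-1)|\xi|^{\al-2}$ is nonvanishing on any dyadic shell $|n| \sim N \geq 1$ (and $\al>2$ gives extra room). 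By the transference principle, this passes to an $X^{s,b}$-estimate with $b > \tfrac{1}{2}$: for any $\eps_1 > 0$,
\begin{align*}
\| \pi_N u \|_{L^6_I L^6_x} \lesssim N^{\eps_0} \| \pi_N u \|_{X^{0, \frac{1}{2}+\eps_1}_I}.
\end{align*}

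To push the temporal index strictly below $\tfrac{1}{2}$, I would interpolate this bound with a cheap high-loss estimate. By Bernstein's inequality $\| \pi_N u \|_{L^6_x} \lesssim N^{\frac{1}{3}} \| \pi_N u \|_{L^2_x}$ and the embedding $X^{0, \frac{1}{3}}_I \hookrightarrow L^6_I L^2_x$ from Lemma \ref{LEM:emb}, one has
\begin{align*}
\| \pi_N u \|_{L^6_I L^6_x} \lesssim N^{\frac{1}{3}} \| \pi_N u \|_{X^{0, \frac{1}{3}}_I}.
\end{align*}
Complex interpolation on the $X^{s,b}$ scale between this estimate and the previous one then produces, for any sufficiently small $\eta > 0$,
\begin{align*}
\| \pi_N u \|_{L^6_I L^6_x} \lesssim N^{\eps_0 + C\eta} \| \pi_N u \|_{X^{0, \frac{1}{2}-\eta}_I},
\end{align*}
in which $\eps_0 + C\eta$ can be made as small as desired. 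To conclude, I would sum over dyadic scales using the triangle inequality and Cauchy--Schwarz against the $N^\eps$ weight,
\begin{align*}
\| u \|_{L^6_I L^6_x} \leq \sum_N \| \pi_N u \|_{L^6_I L^6_x} \lesssim \Big( \sum_N N^{2(\eps_0 + C\eta) - 2\eps} \Big)^{\frac{1}{2}} \| u \|_{X^{\eps, \frac{1}{2}-\eta}_I},
\end{align*}
and the geometric sum converges as soon as $\eps > \eps_0 + C\eta$, which is arranged by first fixing the outer $\eps$ and then choosing $\eps_0, \eta$ small enough.

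The main obstacle lies in the very first step, namely the linear decoupling input, which rests on the deep Bourgain--Demeter theorem; this is precisely what forces the $\eps$-loss in the regularity of the statement. Once this input is in hand, the rest of the argument is essentially bookkeeping: interpolating carefully on the two-parameter $X^{s,b}$ scale to trade the extra $\eps$-regularity for a small gain in $b$, and ensuring the dyadic sum over $N$ converges by absorbing the decoupling loss $N^{\eps_0}$ into the $N^{\eps}$ weight provided by the Sobolev index.
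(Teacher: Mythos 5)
Your proposal is correct and follows essentially the same route as the paper: the Bourgain--Demeter decoupling input (via \cite{Schi}) combined with the transference principle gives the $b>\frac12$ estimate, a cheap bound (Sobolev/Bernstein plus the $X^{0,\frac13}\embeds L^6_t L^2_x$ embedding of Lemma \ref{LEM:emb}) gives an estimate at $b=\frac13$, and interpolation trades a little Sobolev regularity for pushing $b$ below $\frac12$. The only cosmetic difference is that you run the argument dyadically and sum at the end, whereas the paper interpolates the two global estimates $X^{\eps,\frac12+\dl}$ and $X^{\frac13,\frac13}$ directly.
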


\begin{proof}
We use the following $L^6$-Strichartz estimate from \cite[Proposition 1.1]{Schi}:
\begin{align*}
\big\| e^{-it D^\al} f \big\|_{L_I^6 L_x^6} \les \| f \|_{H_x^\eps},
\end{align*}

\noi
which was proved by the $\l^2$ decoupling theorem by Bourgain-Demeter \cite{BD}.
By using the transference principle in \cite[Lemma 2.9]{Tao}, we obtain
\begin{align}
\| u \|_{L_I^6 L_x^6} \les \| u \|_{X_I^{\eps, \frac 12 + \dl}}
\label{L6-1}
\end{align}

\noi
for any $\dl > 0$. Also, by Sobolev's embedding and Lemma \ref{LEM:emb}, we have
\begin{align}
\| u \|_{L_I^6 L_x^6} \les \| u \|_{X_I^{\frac 13, \frac 13}}.
\label{L6-2}
\end{align}

\noi
Thus, by interpolating \eqref{L6-1} and \eqref{L6-2}, we obtain the desired estimate \eqref{L6}.
\end{proof}

\subsection{Non-resonant and resonant estimates}

In this section, we establish non-resonant and resonant estimates that are necessary for proving Theorem \ref{TM:LWP2}, local well-posedness of the cubic fNLS \eqref{fnls3}. By writing \eqref{fnls3} in terms of Fourier coefficients, we have
\begin{align}
\label{fNLS6}
\begin{split}
\dt\ft{w}(n) + i|n|^{\al} \ft{w}(n) &= -i \sum_{\G(n)}  e^{it\Psi(\cj{n})}  \ft{w}(n_1)\cj{\ft{w}}(n_2) \ft{w}(n_3) + i  \big( |\ft{w}(n)|^2 - |\ft{u}_0(n)|^2 \big) \ft{w}(n)  \\
&= -i\ft{\NN_2(w)}(n) + i\ft{\RR_2(w)}(n),
\end{split}
\end{align}
for all $  n\in \Z$.

We first show the following trilinear estimate for the non-resonant term $\NN_2 (w)$, which is a slight generalization of \cite[Proposition 3.1]{kwak}.

\begin{proposition}
\label{Prop:triR} 
Let $\al > 2$, $\max \{\frac{2 - \al}{4}, \frac{1 - \al}{6}\} < s < 0$, $T > 0$, and $u_0 \in H^s (\T)$.
Then, for $0 < \eps \ll 1$ small enough, we have
\begin{align}
\| {\NN_2(w_1, w_2, w_3)} \|_{X_T^{s,-\frac12  + 2\eps}} \les \prod_{j = 1}^3 \| w_j \|_{X_T^{s,\frac12 +\eps}}.	
\label{triT}
\end{align}
\end{proposition}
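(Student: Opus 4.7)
The strategy is to adapt the $X^{s,b}$ trilinear-estimate framework, extending the $\al=4$ argument of \cite[Proposition 3.1]{kwak} to general $\al>2$. After extending each $w_j$ globally, one may drop the subscript $T$ and work with the global $X^{s,b}$-norm. By duality, \eqref{triT} reduces to bounding the pairing
\[
I := \bigg| \int_\R \sum_{n\in\Z} \sum_{\G(n)} e^{it\Psi(\cj n)}\, \ft{w}_1(t,n_1)\, \cj{\ft{w}_2(t,n_2)}\, \ft{w}_3(t,n_3)\, \cj{\ft{v}(t,n)}\, dt \bigg|
\]
over test functions $v$ of unit $X^{-s,\frac12-2\eps}$-norm. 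Taking the time Fourier transform turns the factor $e^{it\Psi(\cj n)}$ into the convolution constraint $\tau-\tau_1+\tau_2-\tau_3=\Psi(\cj n)$; replacing all Fourier data by nonnegative $L^2$ functions then recasts the estimate as an $L^2$-boundedness problem for a weighted multilinear form whose multiplier collects the weights $\jb{n}^s\jb{n_j}^{-s}$ and the inverse modulation factors.

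Next I would perform a dyadic decomposition $|n|\sim N$, $|n_j|\sim N_j$, letting $N_{\max}$ be the largest. The identity
\[
\sigma-\sigma_1+\sigma_2-\sigma_3 = \Phi(\cj n)+\Psi(\cj n), \qquad \Phi(\cj n):=|n|^\al-|n_1|^\al+|n_2|^\al-|n_3|^\al,
\]
forces at least one of the four modulations to be $\ges|\Phi+\Psi|$. Applying Lemma \ref{LEM:factor} together with the $\G(n)$-constraints $n_1\neq n_2$ and $n_2\neq n_3$ (which translate respectively into $|n_1-n_2|=|n-n_3|\geq 1$ and $|n-n_1|=|n_2-n_3|\geq 1$) yields $|\Phi(\cj n)|\ges N_{\max}^{\al-2}$. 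Since $|\Psi(\cj n)|\les N_{\max}^{-2s}\|u_0\|_{H^s}^2$ and the standing hypothesis gives $s>\frac{2-\al}{4}>\frac{2-\al}{2}$, the phase $\Psi$ is dominated by $\Phi$ as soon as $N_{\max}$ exceeds a threshold depending only on $\|u_0\|_{H^s}$; the finitely many remaining small-frequency blocks contribute a harmless bounded sum. Thus, in the main regime, we recover the usual modulation bound $\max(\sigma,\sigma_1,\sigma_2,\sigma_3)\ges N_{\max}^{\al-2}$.

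With this in hand, I would split into four sub-cases according to which modulation is maximal and use the gain on $N_{\max}^{\al-2}$ to convert the $\tfrac12\pm$ modulation weights into $L^4_{t,x}$- or $L^6_{t,x}$-factors, appealing to the Strichartz estimates of Lemmas \ref{LEM:L4str} and \ref{LEM:L6str}. The conditions $s>\frac{2-\al}{4}$ and $s>\frac{1-\al}{6}$ are tailored so that the $L^4$-based bookkeeping closes when $2<\al\leq 4$ and the $L^6$-based (Bourgain-Demeter) bookkeeping closes when $\al\geq 4$; taking the maximum covers all $\al>2$. The main obstacle will be the high-high-high-high regime where $N_1\sim N_2\sim N_3\sim N$, which saturates the threshold and requires a careful balance between the gain on the maximal modulation and the loss from the $\jb n^s\jb{n_j}^{-s}$ weights. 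Outside this regime, a substantial high-low gain (e.g.\ from \eqref{L4Y} combined with a Cauchy-Schwarz argument) makes the summation over the dyadic blocks converge easily.
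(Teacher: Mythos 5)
Your overall skeleton (duality, transferring the phase $e^{it\Psi}$ into the time-frequency constraint, invoking Lemma \ref{LEM:factor} after checking that $\Psi$ is dominated by the dispersive part, then case analysis plus Strichartz) matches the paper's strategy, and your treatment of the phase is essentially equivalent to the paper's device of absorbing $|\ft{u_0}(n)|^2$ into the modified dispersion $\mu(n)$ and working with the $Y^{s,b}$-norm. However, there is a genuine gap at the decisive step: you collapse the lower bound of Lemma \ref{LEM:factor} to $|\Phi(\cj n)|\ges N_{\max}^{\al-2}$, discarding the factors $|n_1-n_2|$ and $|n_2-n_3|$. These factors are not a convenience but the engine of the proof. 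The paper keeps the full bound $|G|\ges |n_1-n_2||n_2-n_3||\nm|^{\al-2}$, so that after dividing by the maximal modulation the multiplier carries a decaying weight $|n_1-n_2|^{-(\frac12-2\eps)}$ in the convolution variable $n'=n_1-n_2$; this is exactly what makes the Cauchy--Schwarz in $n'$ (paired with the $L^4$-Strichartz estimate of Lemma \ref{LEM:L4str} and an $L^\infty_t L^2_x$ factor) summable, and it produces the threshold $\al-1+4s>1$, i.e.\ $s>\frac{2-\al}{4}$, in the case $\jb{n}\sim\nm$. More importantly, in the regime $\jb{n_1}\sim\jb{n_2}\sim\jb{n_3}\gg\jb{n}$ one has $|n_2-n_3|\sim\nm$, which upgrades the modulation gain to $|n_1-n_2|^{\frac12-}\nm^{\frac{\al-1}{2}-}$ and yields the condition $\al+6s>1$, i.e.\ $s>\frac{1-\al}{6}$. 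Your weakened bound cannot reproduce this: with only $N_{\max}^{(\al-2)(\frac12-)}$ of gain against a weight $\sim N_{\max}^{-3s}$, even the $L^6$ (Bourgain--Demeter) route closes only for $s>\frac{2-\al}{6}$, which is strictly more restrictive than $\frac{1-\al}{6}$ (by $\frac16$ of a derivative) and so misses the stated range whenever $\al>4$. Moreover, without any decay in $n_1-n_2$ your $L^4$-based bookkeeping in the comparable-frequency case has no mechanism to sum the free convolution variable at all.

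Two smaller points. First, your proposed dichotomy ($L^4$ for $2<\al\le 4$, $L^6$ for $\al\ge 4$) does not reflect how the two thresholds arise: in the paper both come from the same $L^4$/Cauchy--Schwarz argument, applied in different frequency configurations (the case $\jb{n}\sim\nm$ gives $\frac{2-\al}{4}$; the cases with $\jb{n}$ or $\jb{n_2}$, $\jb{n_3}$ small give $\frac{1-\al}{6}$), and Lemma \ref{LEM:L6str} is not used in this proposition. Second, the "high-high-high-high" regime you flag as the main obstacle is in fact the tamest one here; the regime that saturates the $\frac{1-\al}{6}$ threshold is high$\times$high$\times$high into \emph{low}, precisely where your simplified modulation bound fails.
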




\begin{proof}
By working with extensions of $u_1$, $u_2$, and $u_3$ on the whole time line, we can ignore the subscript $T$ for the $X^{s, b}$-norms. By duality, we have
\begin{align*}
\| \NN_2 &(w_1, w_2, w_3) \|_{X^{s, -\frac 12 + 2 \eps}} \\
&= \sup_{\| h \|_{X^{s, \frac 12 - 2 \eps}} \leq 1} \bigg| \sum_{n \in \Z} \jb{n}^{2s} \int_\R \sum_{\G (n)} e^{it \Psi (\cj{n})} \ft{w_1} (t, n_1) \cj{\ft{w_2}} (t, n_2) \ft{w_3} (t, n_3) \cj{\ft{h}} (t, n) dt \bigg|
\end{align*}

\noi
By letting $\ft{v_j} (t, n_j) = e^{it |\ft{u_0} (n_j)|^2} \ft{w_j} (t, n_j)$, $j = 1, 2, 3$, we have
\begin{align*}
\sum_{\G(n)}  e^{it\Psi(\bar n)}  \ft{w_1}(t, n_1)\cj{\ft{w_2}}(t, n_2) \ft{w_3}(t, n_3)=
\sum_{\G(n)}  e^{-it|\ft{u_0}(n)|^2}  \ft{v_1}(t, n_1)\cj{\ft{v_2}}(t, n_2) \ft{v_3}(t, n_3).
\end{align*}

\noi
Hence, to prove \eqref{triT}, we only need to show
\begin{equation}
\label{eq:trilinear}
\bigg| \sum_{n \in \Z} \jb{n}^{2s} \int_\R  \sum_{\G(n)}  \ft{v_1}(t, n_1) \cj{\ft{v_2}}(t, n_2) \ft{v_3}(t, n_3)
\cj{\ft{v_4}} (t, n) dt  \bigg|
\les  \prod_{j = 1}^3 \| v_j \|_{Y^{s,\frac12 + \eps}} \cdot \| v_4 \|_{Y^{s,\frac12 - 2\eps}}.
\end{equation}

\noi
where $\ft{v_4} (t, n) := e^{it|\ft{u_0}(n)|^2} \ft{h}(t, n) $. By Plancherel's theorem in time,  \eqref{eq:trilinear} becomes
\begin{align}
\label{tri_goal}
\begin{split}
\bigg| \sum_{n \in \Z} &\jb{n}^{2s} \int_{\tau_1 - \tau_2 + \tau_3 = \tau} \sum_{\G(n)}  \ft{v_1}(\tau_1, n_1) \cj{\ft{v_2}}(\tau_2, n_2) \ft{v_3}(\tau_3, n_3)
\cj{\ft{v_4}} (\tau, n) d \tau_1 d \tau_2 d \tau_3 d \tau \bigg| \\
&\les \prod_{j = 1}^3 \| v_j \|_{Y^{s,\frac12 + \eps}} \cdot \| v_4 \|_{Y^{s,\frac12 - 2\eps}}.
\end{split}
\end{align}

We define $G$ to be
\begin{align*}
G &:= (\tau_1 + |n_1|^\al ) - (\tau_2 + |n_2|^\al) +(\tau_3 + |n_3|^\al)-(\tau + |n|^\al) \\
&\hphantom{:}\quad +( |\ft{u_0}(n_1)|^2 - |\ft{u_0}(n_2)|^2 + |\ft{u_0}(n_3)|^2 - |\ft{u_0}(n)|^2 ).
\end{align*}

\noi
Since $u_{0} \in H^{s}$ for $\max \{ \frac{2 - \al}{4}, \frac{1 - \al}{6} \} < s < 0$, we know that $|\ft{u_{0}}(n)|^{2} \les \jb{n}^{-s} \ll \jb{\nm}^{\al - 2}$, where $\nm = \max\{|n_1|,|n_2|,|n_3|,|n|\}$ is large enough. Thus, by Lemma \ref{LEM:factor}, we have
\[
 |G| \ges |n_1-n_2||n_2-n_3||\nm|^{\al-2}
\]

\noi
under the conditions $\tau_1 - \tau_2 + \tau_3 = \tau$, $n_1 - n_2 + n_3 = n$, $n \neq n_1$, and $n \neq n_3$.
Recalling that $\mu (n) = |n|^\al - |\ft{u_0} (n)|^2$, we deduce that
\begin{align*}
\max\{|\tau + \mu(n)|, |\tau_1 + \mu(n_1)|, |\tau_2 + \mu(n_2)|, |\tau_3 + \mu (n_3)| \} \ges |n_1-n_2||n_2-n_3||\nm|^{\al-2}.
\end{align*}

\noi
We can assume that $|\tau + \mu (n)|$ is the maximum modulation since other cases are similar (and slightly easier), so that we have
\begin{align}
|\tau + \mu (n)| \ges |n_1-n_2||n_2-n_3||\nm|^{\al-2}.
\label{mod_bdd}
\end{align}

\noi
We define
\begin{align*}
\ft{f_1} (\tau_1,n_1) &= 
\jb{n_1}^s|\ft{v_1}(\tau_1,n_1)|, \\
\ft{f_2} (\tau_2, n_2) &= \jb{n_2}^s|\ft{v_2}(\tau_2, n_2)|, \\
\ft{f_3} (\tau_3,n_3) &= 
\jb{n_3}^s|\ft{v_3}(\tau_3,n_3)|, \\
\ft{g}(\tau,n) &= \jb{\tau +
 \mu(n)  }^{\frac 12 - 2\eps}\jb{n}^s|\ft{v_4}(\tau,n)|,
\end{align*}

\noi
so that by Plancherel's theorem in time and \eqref{mod_bdd}, the left-hand-side of \eqref{tri_goal} is bounded by
\begin{equation}
\label{eq:tri1}
\int_\R \sum_{n,\G(n)} m(\cj{n})  \big| \ft{f_1} (t, n_1) \cj{\ft{f_2}} (t, n_2) \ft{f_3} (t, n_3) \cj{\ft{g}}(t, n) \big| dt,
\end{equation}

\noi
where the multiplier $m (\cj n)$ is given by
\[
m(\cj n) = m(n_1,n_2,n_3,n):=
\frac{\jb{n}^{s}\jb{n_1}^{-s}\jb{n_2}^{-s}\jb{n_3}^{-s}}{(|n_1-n_2||n_2-n_3||\nm|^{\al-2})^{\frac 12 - 2 \eps}}.
\]

By symmetry between $n_1$ and $n_3$ and the fact that two largest frequencies are comparable, it suffices to consider the following 4 cases.

\medskip
\noindent
\textbf{Case 1:} $\jb{n} \sim |\nm|$.

In this case, we have $\jb{n}^s \jb{n_j}^{-s} \les 1$ for some $j \in \{1, 2, 3\}$. Given $\eps > 0$ sufficiently small and $s > \frac{2 - \al}{4}$, we have
\begin{align*}
m(\cj n) &\les \frac{|\nm|^{-2s}}{(|n_1-n_2||n_2-n_3|)^{\frac{1}{2} - 2 \eps}  |\nm|^{\frac{\al -2}{2} -}  } \les \frac{1}{|n_1-n_2|^{\frac{\al - 1}{2} + 2s  -}}.
\end{align*}

\noi
Then, from the Cauchy–Schwarz inequality, H\"older's inequality, Plancherel's theorem, Hausdorff-Young's inequality, and the $L^4$-Strichartz estimate (Lemma \ref{LEM:L4str}), we have
\begin{align*}
|\eqref{eq:tri1}| &\les \int_\R \sum_{n,\G(n)} \frac{1}{|n_1-n_2|^{\frac{\al-1}{2}+2s-}} \big| \ft{f_1} (t, n_1) \cj{\ft{f_2}} (t, n_2) \ft{f_3} (t, n_3) \cj{\ft{g}} (t, n) \big| dt  \\
&= \int_\R \sum_{n' \neq 0}  \frac{1}{|n'|^{\frac{\al-1}{2}+2s-}} \Big| \ft{f_1 \cj{f_2} }(t, n') \ft{ f_3 \cj{g} }(t, -n') \Big| dt \\
& \les \int_\R \bigg( \sum_{n' \neq 0}  \Big| \ft{f_1 \cj{f_2} }(t, n') \ft{ f_3 \cj{g} }(t, -n') \Big|^{2} \bigg)^{1/2} dt \\
& \leq \int_\R \Big\| \ft{f_1 \cj{f_2} } (t, n) \Big\|_{\l_{n}^{2}} \big\| \ft{ f_3 \cj{g} } (t, n) \big\|_{\l_{n}^{\infty}} dt \\
& \leq \int_\R \|f_1 \cj{f_2} \|_{L_{x}^{2}} \| f_3 \cj{g} \|_{L_{x}^{1}} dt \\
&\leq \norm{f_1}_{L_{t,x}^4}\norm{f_2}_{L_{t,x}^4}\norm{f_3}_{L_t^{\infty}L_x^2}\norm{g}_{L_{t,x}^2}  \notag\\
&\les \| f_1 \|_{Y^{0, \frac12 + \eps}} \| f_2 \|_{Y^{0, \frac12 + \eps}} 
\| f_3 \|_{Y^{0, \frac12 + \eps}}
\| g \|_{L_{t,x}^2} \\
&= \| v_1 \|_{Y^{s, \frac 12 + \eps}} \| v_2 \|_{Y^{s, \frac 12 + \eps}} \| v_3 \|_{Y^{s, \frac 12 + \eps}} \| v_4 \|_{Y^{s, \frac 12 - 2 \eps}},
\end{align*}

\noi
where we used $\al - 1 + 4s > 1$ given $s>\frac{2-\al}{4}$. This is the desired estimate.

\medskip
\noindent
\textbf{Case 2:} $\jb{n_1} \sim \jb{n_2} \sim \jb{n_3} \gg \jb{n}$.

In this case, we have $|n_2 - n_3| \sim |n_1| \sim |\nm|$. Thus, given $\eps > 0$ sufficiently small and $s > \frac{1 - \al}{6}$, we have
\begin{align*}
m(\cj{n}) \les \frac{|\nm|^{-3s}}{|n_1 - n_2|^{\frac 12 - 2 \eps} |\nm|^{\frac{\al - 1}{2} -}} \les \frac{1}{|n_1 - n_2|^{\frac{\al}{2} + 3s -}}.
\end{align*}

\noi
We can then proceed as in Case 1 to obtain the desired estimate, where we need to use $\al + 6s > 1$ given $s > \frac{1 - \al}{6}$.

\medskip
\noindent
\textbf{Case 3:} $\jb{n_1} \sim \jb{n_2} \gg \jb{n}, \jb{n_3}$.

In this case, we can use exactly the same steps as in Case 2 to obtain the desired estimate.

\medskip
\noindent
\textbf{Case 4:} $\jb{n_1} \sim \jb{n_3} \gg \jb{n}, \jb{n_2}$.

In this case, we can use exactly the same steps as in Case 2 to obtain the desired estimate.

\end{proof}

We now consider the resonant term $\RR_2(w)$.
Note that for any $T > 0$ and $s \in \R$, we have
\begin{equation}
\label{eqI}
\norm{\RR_2(w)}_{L_T^2 H_x^s} \les \Big( \sup_{\substack{|t| \leq T \\ n \in \Z}}\big| |\ft{w}(t, n)|^2 - |\ft{u_0} (n)|^2 \big| \Big) \norm{w}_{L^2_T H_x^s}.
\end{equation}

\noi
If $w$ is a smooth solution to \eqref{fnls3}, then by \eqref{fNLS6}, we have
\begin{align}
\begin{split}
|\ft {w}(t, n)|^2 - |\ft {u_0}(n)|^2 & = \int_0^t \frac{d}{dt'} |\ft{w}(t', n)|^2 dt' \\
& = 2\Im \int_0^t \sum_{\G(n)} e^{it'\Psi(\bar n)} \ft{w}(t', n_1) \cj{\ft {w}} (t', n_2) \ft {w} (t', n_3) 
\cj{\ft {w}} (t', n) dt'
\end{split}
\label{EnN}
\end{align}

\noi
for any $t \in \R$ and $n \in \Z$. We now focus on the right-hand-side of \eqref{EnN} and establish the following estimate.

\begin{proposition}
\label{Prop:quinc}
Let $\al>2$, $\frac{2 - \al}{6} \leq s < 0$, $0 < T \leq 1$, and $u_0 \in C^{\infty}(\T)$. 
Suppose that $w$ is a global smooth solution to \eqref{fnls3} with $w(0)=u_0$. Then,
for $0 < \eps \ll 1$ small enough, we have
\begin{align}
\label{quin1}
\begin{split}
\sup_{\substack{|t| \leq T \\ n \in \Z}} \bigg| &\int_0^t  \sum_{\G(n)} 
e^{i\Psi(\cj n)}
\ft{w}(t', n_1) \cj{\ft{w}} (t', n_2) \ft{w}(t', n_3)\cj{\ft{w}}(t', n) dt' \bigg| \\
&\les 
\|u_0\|^4_{H^s} + \norm{w}_{X_T^{s,\frac12+\eps}}^4 + \norm{w}_{X_T^{s,\frac12+\eps}}^6
\end{split}
\end{align}

\end{proposition}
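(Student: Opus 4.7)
I would prove this by normal form reduction (integration by parts in time), converting the oscillatory four-linear integrand into quartic boundary terms and a higher-order remainder integral. The boundary terms are intended to yield the $\|u_0\|_{H^s}^4$ and $\|w\|_{X_T^{s,\frac12+\eps}}^4$ pieces, while the remainder yields the $\|w\|_{X_T^{s,\frac12+\eps}}^6$ piece. The analysis is one order higher than the trilinear estimate in Proposition \ref{Prop:triR}.

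\emph{Step 1: interaction picture and resonance lower bound.} Set $\ft W(t,n) := e^{it|n|^\al}\ft w(t,n)$, so $|\ft W| = |\ft w|$. Writing $\Phi(\cj n) := |n_1|^\al - |n_2|^\al + |n_3|^\al - |n|^\al$ and $\wt \Phi(\cj n) := \Phi(\cj n) - \Psi(\cj n)$, the integrand rewrites as
\[
\sum_{\G(n)} e^{it'\Psi(\cj n)}\ft w(t',n_1)\cj{\ft w(t',n_2)}\ft w(t',n_3)\cj{\ft w(t',n)}
= \sum_{\G(n)} e^{-it'\wt\Phi(\cj n)}\ft W(t',n_1)\cj{\ft W(t',n_2)}\ft W(t',n_3)\cj{\ft W(t',n)}.
\]
Since $|\Psi(\cj n)| \les \|u_0\|_{L^\infty(\T)}^2$ is bounded, Lemma \ref{LEM:factor} yields $|\wt\Phi(\cj n)|\sim|\Phi(\cj n)|\ges |n_1-n_2||n_2-n_3||n_{\max}|^{\al-2}$ on $\G(n)$ once $|n_{\max}|$ exceeds a threshold depending only on $\|u_0\|_{L^\infty}$; the complementary low-frequency region is a finite sum that is absorbed trivially into $\|u_0\|_{H^s}^4$.

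\emph{Step 2: integration by parts and boundary terms.} Since $\wt\Phi(\cj n)$ is independent of $t'$, integration by parts in $t'$ gives
\[
\int_0^t e^{-it'\wt\Phi(\cj n)}G(t')\,dt' = \Big[\tfrac{e^{-it'\wt\Phi(\cj n)}}{-i\wt\Phi(\cj n)}G(t')\Big]_0^t + \tfrac{1}{i\wt\Phi(\cj n)}\int_0^t e^{-it'\wt\Phi(\cj n)}\,\partial_{t'}G(t')\,dt',
\]
with $G(t') := \ft W(t',n_1)\cj{\ft W(t',n_2)}\ft W(t',n_3)\cj{\ft W(t',n)}$. The boundary term at $t'=0$ is a quartic form in $\ft u_0$ weighted by $1/\wt\Phi$; by a case analysis in the four Fourier variables, essentially the one carried out in the proof of Proposition \ref{Prop:triR}, it is bounded by $\|u_0\|_{H^s}^4$. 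The boundary term at $t'=t$ is the analogous expression in $\ft w(t,\cdot)$, bounded by $\|w(t)\|_{H^s}^4 \les \|w\|_{X_T^{s,\frac12+\eps}}^4$ via Lemma \ref{LEM:emb}.

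\emph{Step 3: the sextilinear remainder and the main obstacle.} Expanding $\partial_{t'}G$ via \eqref{fNLS6} in the interaction variable produces contributions from both $\NN_2$ (genuinely sextilinear in $\ft W$) and $\RR_2$ (a further sextilinear integrand together with lower-order terms carrying $|\ft u_0(n_j)|^2$-coefficients that are absorbed into the $\|u_0\|_{H^s}^4 + \|w\|_{X_T^{s,\frac12+\eps}}^4$ bounds). The main obstacle is the uniform-in-$(t,n)$ bound
\[
\sup_{|t|\leq T,\, n\in\Z}\bigg|\int_0^t \sum_{\G(n)}\tfrac{e^{-it'\wt\Phi(\cj n)}}{\wt\Phi(\cj n)}\, Q(t')\,dt'\bigg| \les \|w\|_{X_T^{s,\frac12+\eps}}^6,
\]
where $Q(t')$ stands for a typical six-fold product of $\ft W$ factors (with appropriate conjugations) arising from the expansion. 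I would establish this by a dyadic decomposition of the six Fourier parameters followed by a case analysis on their relative sizes, controlling the space-time six-fold products via H\"older's and Bernstein's inequalities together with the $L^4$-Strichartz estimate of Lemma \ref{LEM:L4str} and the $L^6$-Strichartz estimate of Lemma \ref{LEM:L6str}. The endpoint regularity $s = \frac{2-\al}{6}$ enters precisely here: the smoothing factor $|\wt\Phi|^{-1}\les |n_1-n_2|^{-1}|n_2-n_3|^{-1}|n_{\max}|^{2-\al}$ must offset the six-fold $H^s$-weight loss $\prod_j \jb{n_j}^{-s}$, and the summability is tight exactly at this threshold. Combining all four contributions (the two boundary terms and the two remainder pieces) and taking the supremum over $|t|\le T$ and $n\in\Z$ produces the claimed estimate.
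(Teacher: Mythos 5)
Your overall strategy --- integrate by parts in time against the (essentially) dispersive phase, bound the two quartic boundary terms by $\|u_0\|_{H^s}^4$ and $\|w\|_{X_T^{s,\frac12+\eps}}^4$, and expand the time derivative via the equation into a sextilinear remainder --- is the paper's. But Step 3 contains a genuine gap. After substituting the nonlinearity for, say, $\partial_{t'}\ft W(t',n_1)$, the remainder carries a sum over the inner hyperplane $n_1=n_{1,1}-n_{1,2}+n_{1,3}$ as well as the outer one, and the weight $1/\wt\Phi(\cj n)$ you rely on depends only on $(n_1,n_2,n_3,n)$: it gives no decay whatsoever in the individual sizes of $n_{1,1},n_{1,2},n_{1,3}$. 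In the cascade regime $|n_{1,1}|,|n_{1,2}|,|n_{1,3}|\gg \nm$ (e.g.\ $n_{1,1}=K+1$, $n_{1,2}=K$ with $K\to\infty$ and $n_1$ fixed), the weights $\jb{n_{1,j}}^{-s}$ with $s<0$ grow like $K^{-2s}$ while your smoothing factor stays bounded, so the claim that $|\wt\Phi|^{-1}$ ``offsets the six-fold $H^s$-weight loss'' fails and the frequency sum diverges. The paper isolates exactly this configuration (Case 2 of the term $\II_2$) and rescues it by observing that the \emph{second}-generation resonance function $|n_{1,1}|^\al-|n_{1,2}|^\al+|n_{1,3}|^\al-|n_1|^\al$ dominates the first, so the maximum modulation obeys $|\tau+\mu(n)|\ges |n_{1,2}-n_{1,3}|^{1+2\eps}|\nm'|^{\al-1-2\eps}$; the resulting gain $|n_{1,2}-n_{1,3}|^{-\frac12-\eps}$ is what restores summability (with a mixed Case 3 interpolating between the two mechanisms). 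Without this second modulation input your remainder estimate does not close; the $L^6$-Strichartz estimate you invoke does not address the issue (the paper needs only $L^4$-Strichartz, Cauchy--Schwarz in frequency, and Lemma \ref{LEM:time} for the sharp cutoff $\ind_{[0,t]}$ that guarantees uniformity in $t$).

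A secondary issue: you conjugate only by $e^{it|n|^\al}$, so the $-|\ft{u_0}(n)|^2\ft{w}(n)$ part of $\RR_2$ survives in $\partial_{t'}\ft W$ and produces quartic remainder integrals weighted by $|\ft{u_0}(n_j)|^2\les \jb{n_j}^{-2s}\|u_0\|_{H^s}^2$. Since $-2s>0$ this is a frequency \emph{loss}, and at the endpoint $s=\frac{2-\al}{6}$ the resulting sum is borderline (the exponent $\al-1+6s$ equals $1$), so these terms are not absorbed as trivially as you claim. The paper sidesteps this entirely by conjugating with $\mu^{+}(n)=|n|^\al+|\ft{u_0}(n)|^2$, which cancels those contributions exactly and leaves only the genuinely sextilinear resonant piece $|\ft{w}(n_j)|^2\ft{w}(n_j)$ together with $\NN_2$; you should adopt that phase or supply the borderline quartic estimate separately.
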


\begin{proof}
We fix $t$ such that $|t| \leq T$. 
Recalling the definition of $\Psi$ in \eqref{Psi}. Given $n_j, n_{j1}, n_{j2}, n_{j3} \in \Z$ for $j \in \{1, 2, 3\}$, we define
\begin{align*}
\Psi(\cj n_j) := \Psi (n_{j, 1}, n_{j, 2}, n_{j, 3}, n_j) = |\ft{u_0} (n_{j, 1})|^2 - |\ft{u_0} (n_{j, 2})|^2 + |\ft{u_0} (n_{j, 3})|^2 - |\ft{u_0} (n_j)|^2.
\end{align*}

\noi
We also define
$\mu^{+}(n) := |n|^\al + |\ft{u_0} (n)|^2$. Furthermore, for $n_1, n_2, n_3, n \in \Z$, we define
\begin{align*}
\phi(\cj n) := \phi (n_1, n_2, n_3, n) = |n_1|^\al - |n_2|^\al + |n_3|^\al - |n|^\al.
\end{align*}

By Fubini's theorem and the integration by parts, we obtain
\begin{align}
\label{fub_ibp}
\begin{split}
\bigg |\int_0^t  &\sum_{\G(n)}
e^{it'\Psi(\cj n)}
 \ft{w}(t', n_1) \cj{\ft{w}}(t', n_2) \ft{w}(t', n_3)\cj{\ft{w}}(t', n) dt' \bigg| \\
&= \bigg | \sum_{\G(n)} \int_0^t
e^{it'\Psi(\cj n)}
 \ft{w}(t', n_1) \cj{\ft{w}}(t', n_2) \ft{w}(t', n_3)\cj{\ft{w}}(t', n) dt' \bigg| \\
&= \bigg| \sum_{\G(n)} \int_0^t e^{- it' \phi(\cj n) } 
\big(e^{it'\mu^+(n_1)} \ft{w}(t', n_1)\big) \\
&\quad \times \big(\cj{e^{it'\mu^+(n_2)} \ft{w}(t', n_2)} \big) 
  \big(e^{it'\mu^+(n_3)} \ft{w}(t', n_3) \big)   \big(\cj{e^{it'\mu^+(n) } \ft{w}(t', n)} \big) dt'   \bigg| \\
&\leq \bigg| \sum_{\G(n)} \frac{e^{it\Psi(\cj n)}}{\phi(\cj n)} \big( \ft{w}(t,n_1) \cj{\ft{w}}(t,n_2) \ft{w}(t,n_3) \cj{\ft{w}}(t,n) - \ft{u_0} (n_1)\cj{\ft{u_0}} (n_2) \ft{u_0}(n_3) \cj{\ft{u_0}}(n)  \big) \bigg| \\
&\quad + \bigg| \sum_{\G(n)} \int_0^t \frac{e^{- it' \phi(\cj n)}}{\phi(\cj n)}
 \cdot \frac{d}{dt'}\bigg( \big(e^{it' \mu^+ (n_1)} \ft{w}(t', n_1)\big)
\big(\cj{e^{it' \mu^+ (n_2)}  \ft{w}(t', n_2)} \big)\\
&\quad \times \big(e^{it' \mu^+ (n_3) } \ft{w}(t', n_3) \big)
\big(\cj{e^{it' \mu^+ (n)} \ft{w}(t', n)} \big) \bigg) dt' \bigg| \\
&=: \1+\II
\end{split}
\end{align}

We first consider $\1$. 
For convenience, we define $\ft f (m) = \jb{m}^s | \ft{w} (t, m)|$ for any $m \in \Z$. We also let $\nm = \max \{ |n_1|, |n_2|, |n_3|, |n| \}$. 
Thus, by Lemma \ref{LEM:factor}, H\"older's inequality, Hausdorff-Young's inequalities, and the Cauchy-Schwarz inequality, we obtain
\begin{align}
\label{4f}
\begin{split}
\bigg| &\sum_{\G(n)} \frac{e^{it\Psi(\cj n)}}{\phi(\cj n)} \ft{w}(t,n_1) \cj{\ft{w}}(t,n_2) \ft{w}(t,n_3) \cj{\ft{w}}(t,n) \bigg| \\
&\les \sum_{n, \G(n)} \frac{\jb{n_1}^{-s} \jb{n_2}^{-s} \jb{n_3}^{-s} \jb{n}^{-s} }{|n_1 - n_2| |\nm|^{\al - 2}} \ft{f} (n_1) \cj{\ft{f}} (n_2) \ft{f} (n_3) \cj{\ft{f}} (n) \\
&\les \sum_{n, \G(n)} \frac{1}{|n_1 - n_2| |\nm|^{\al - 2 + 4s}} \ft{f} (n_1) \cj{\ft{f}} (n_2) \ft{f} (n_3) \cj{\ft{f}} (n) \\
&\les \sum_{n' \neq 0} \frac{1}{|n'|^{\al - 1 + 4s}} \ft{f \cj{f}} (n') \ft{f \cj{f}} (-n') \\
&\les \Big\| \ft{f \cj{f}} \Big\|_{\l^\infty} \big\| \ft{f \cj{f}} \big\|_{\l^\infty} \\
&\les \| f \|_{L^2}^4 \les \| w(t) \|_{H^s}^4,
\end{split}
\end{align}

\noi
where we used $\al - 2 + 4s > 0$ and $\al - 1 + 4s > 1$ given $s \geq \frac{2 - \al}{6}$. Thus, using similar steps, we obtain
\begin{align*}
\1 \les \| u_0 \|_{H^s}^4 + \| w (t) \|_{H^s}^4 \les \| u_0 \|_{H^s}^4 + \| w \|_{X^{s, \frac 12 + \eps}}^4
\end{align*}

\noi
for any $\eps > 0$.

We now consider $\II$.
By using \eqref{fNLS6}, we have
\begin{align}
\label{ob1} 
\begin{split}
\frac{d}{dt} ( e^{it \mu^{+}(n)}\ft{w}(t, n)  ) 
 &= e^{it \mu^{+}(n) } ( \dt  \ft{w}(t, n) + i\mu^{+}(n) \ft{w}(t, n) )  \\
&= e^{it\mu^{+}(n)} \bigg( i|\ft{w}(t, n)|^2\ft{w}(t, n) - i\sum_{\G(n)} e^{it\Psi(\cj n)}  \ft{w}(t, n_1)\cj{\ft{w} (t, n_2)} \ft{w}(t, n_3) \bigg),
\end{split}
\end{align}

\noi
By \eqref{ob1} and Plancherel's theorem, we have
\begin{align*}
\II &\les  \bigg| \sum_{\G (n)} \int_0^t  \frac{1}{\phi (\cj{n})} \bigg[ |\ft{w} (t', n_1)|^2 \big( e^{it' |\ft{u_0} (n_1)|^2} \ft{w} (t', n_1) \big) \\
&\quad - \bigg( \sum_{\G (n_1)} e^{i t' \Psi (\cj{n_1}) + it' |\ft{u_0} (n_1)|^2} \ft{w} (t', n_{1, 1}) \cj{\ft{w}} (t', n_{1, 2}) \ft{w} (t', n_{1, 3}) \bigg) \bigg] \\
&\quad \times \big(\cj{e^{it' |\ft{u_0} (n_2)|^2}  \ft{w}(t', n_2)} \big) \big(e^{it' |\ft{u_0} (n_3)|^2 } \ft{w}(t', n_3) \big)
\big(\cj{e^{it' |\ft{u_0} (n)|^2} \ft{w}(t', n)} \big)  dt' \bigg| \\
&=  \bigg| \sum_{\G (n)} \int_{\G' (\tau)} \frac{1}{\phi (\cj{n})} \bigg( \ft{w} (\tau_{1, 1}, n_1) \cj{\ft{w}} (\tau_{1, 2}, n_1) \ft{u} (\tau_{1, 3}, n_1) \\
&\quad - \sum_{\G (n_1)} \ft u (\tau_{1, 1}, n_{1, 1}) \cj{\ft{u}} (\tau_{1, 2}, n_{1, 2}) \ft u (\tau_{1, 3}, n_{1, 3}) \bigg) \cj{\ft{u}} (\tau_2, n_2) \ft{u} (\tau_3, n_3) \cj{\ft{u \ind_{[0,t]}}} (\tau, n) d\tau_{11} \cdots d\tau \bigg| \\
&=: \int_{\G' (\tau)} \big| \II_1 + \II_2 \big| d\tau_{1, 1} \cdots d\tau 
\end{align*}

\noi
where $\G' (\tau)$ refers to the condition $\tau = \tau_{1, 1} - \tau_{1, 2} + \tau_{1, 3} - \tau_2 + \tau_3$.

For the term $\II_1$, we define
\begin{align*}
\ft h (\tau, n) &= \jb{n}^s |\ft w (\tau, n)|, \\
\ft g (\tau, n) &= \jb{n}^s |\ft u (\tau, n)|, \\
\ft{g'} (\tau, n) &= \jb{n}^s \big| \ft{u \ind_{[0, t]}} (\tau, n) \big|,
\end{align*}

\noi
for any $\tau \in \R$ and $n \in \Z$. Given $s \geq  \frac{2 - \al}{6}$, by Lemma \ref{LEM:factor}, H\"older's inequality, and similar steps as in \eqref{4f}, we obtain
\begin{align*}
|\II_1| &\les \sum_{n, \G(n)} \frac{\jb{n_1}^{-3 s} \jb{n_2}^{-s} \jb{n_3}^{-s} \jb{n}^{-s}}{|n_1 - n_2| |\nm|^{\al - 2}} \ft{h} (\tau_{1, 1}, n_1) \ft{h} (\tau_{1, 2}, n_1) \ft{g} (\tau_{1, 3}, n_1) \ft{g} (\tau_2, n_2) \ft{g} (\tau_3, n_3) \ft{g'} (\tau, n) \\
&\les \big( \sup_{n_1 \in \Z} \ft{h} (\tau_{1, 1}, n_1) \ft{h} (\tau_{1, 2}, n_1) \big) \sum_{n, \G(n)} \frac{1}{|n_1 - n_2|} \ft{g} (\tau_{1, 3}, n_1) \ft{g} (\tau_2, n_2) \ft{g} (\tau_3, n_3) \ft{g'} (\tau, n) \\
&\les \| \ft h (\tau_{1, 1}, n_1) \|_{\l_{n_1}^2} \| \ft h (\tau_{1, 2}, n_1) \|_{\l_{n_1}^2} \| \ft g (\tau_{1, 3}, n_1) \|_{\l^2_{n_1}} \| \ft g (\tau_{2}, n_2) \|_{\l^2_{n_2}} \| \ft g (\tau_{3}, n_3) \|_{\l^2_{n_3}} \big\| \ft{g'} (\tau, n) \big\|_{\l^2_{n}}
\end{align*}

\noi
Thus, by the Cauchy-Schwarz inequality in $\tau$, Young's convolution inequalities, H\"older's inequality in $\tau_3$, and Lemma \ref{LEM:time}, we get
\begin{align*}
&\int_{\G' (\tau)} |\II_1| d\tau_{1, 1} \cdots d\tau \\
&\leq \bigg\| \int_{\G' (\tau)} \| \ft h (\tau_{1, 1}) \|_{\l_{n_{1}}^2} \| \ft h (\tau_{1, 2}) \|_{\l_{n_{1}}^2} \| \ft g (\tau_{1, 3}) \|_{\l_{n_{1}}^2} \| \ft g (\tau_{2}) \|_{\l_{n_2}^2} \| \ft g (\tau_{3}) \|_{\l_{n_3}^2} d\tau_{1, 1} \cdots d\tau_3 \bigg\|_{L^2_\tau} \big\| \ft{g'} \big\|_{L^2_\tau \l_n^2} \\
&\les \| \ft h \|_{L^2_{\tau_{1, 1}} \l_{n_{1}}^2} \| \ft h \|_{L^2_{\tau_{1, 2}} \l_{n_{1}}^2} \| \ft g \|_{L^2_{\tau_{1, 3}} \l_{n_{1}}^2} \| \ft g \|_{L^2_{\tau_2} \l_{n_2}^2} \| \ft g \|_{\l_{n_3}^2 L^1_{\tau_3}} \big\| \ft{g'} \big\|_{L^2_\tau \l_n^2} \\
&\les \| w \|_{X^{s, \frac 12 + \eps}}^6,
\end{align*}

\noi
as desired.

For the term $\II_2$, we consider the following cases.

\medskip \noi
\textbf{Case 1:} $|n_{1, 1}|, |n_{1, 2}|, |n_{1, 3}| \les \nm$.

In this case, for $j \in \{(1,1), (1,2), (1,3), 2, 3\}$, we define
\begin{align*}
\ft{g} (\tau_j, n_j) &= \jb{n_j}^s | \ft{u} (\tau_j, n_j) |, \\
\ft{g'} (\tau, n) &= \jb{n}^s \big| \ft{u \ind_{[0,t]}} (\tau, n) \big|.
\end{align*}

\noi
Given $s \geq  \frac{2 - \al}{6}$, we can bound $|n|^{-s} \les |\nm|^{-s}$ and $|n_j|^{-s} \les |\nm|^{-s}$ for all $j$. Thus, by Lemma~\ref{LEM:factor}, the change of variables $n' = n_{1, 2} - n_{1, 3}$ and $n'' = n_3 - n = n_2 - n_1$, the Cauchy-Schwarz inequality in $n_{1, 1}$ and then in $n'$, H\"older's inequality and Hausdorff-Young's inequality in $n''$, Plancherel's theorem, and H\"older's inequalities again, we obtain
\begin{align*}
|\II_2| &\les \sum_{n, \G (n)} \sum_{\G (n_1)} \frac{\ft{g} (n_{1, 1}) \ft{g} (n_{1, 2}) \ft{g} (n_{1, 3}) \ft{g} (n_{2}) \ft{g} (n_{3}) \ft{g'} (n)}{|n_1 - n_2| |n_2 - n_3|}  \\
&\les \sum_{\substack{n_{1, 1}, n_{1, 2}, n_3, n', n'' \in \Z \\ |n''|, |n_{1, 1} - n' + n'' - n_3| \neq 0}} \frac{\ft g (n_{1, 1}) \ft g (n_{1, 2}) \ft g (n_{1, 2} - n') \ft g (n_{1, 1} - n' + n'') \ft g (n_3) \ft{g'} (n_3 - n'')}{ |n''| |n_{1, 1} - n' + n'' - n_3| } \\
&\les \sum_{\substack{n_3, n'' \in \Z \\ |n''| \neq 0}} \frac{1}{|n''|} \ft{g} (n_3) \ft{g'} (n_3 - n'') \sum_{n' \in \Z} \bigg( \sum_{\substack{n_{1, 1} \in \Z \\ |n_{1, 1} - n' + n'' - n_3| \neq 0}} \frac{\ft g (n_{1, 1})^2}{|n_{1, 1} - n' + n'' - n_3|^2} \bigg)^{1/2} \\
&\quad \times \bigg( \sum_{n_{1, 1} \in \Z} \big| \ft{\cj{g} g} (- n') \ft g (n_{1, 1} - n' + n'') \big|^2 \bigg)^{1/2} \\
&\leq \bigg( \sum_{n'' \in \Z} \bigg| \sum_{n_3 \in \Z} \ft{g} (n_3) \ft{g'} (n_3 - n'') \bigg|^4 \bigg)^{1/4} \bigg( \sum_{n_{1, 1} \in \Z} \ft g (n_{1, 1})^2 \bigg)^{1/2} \\
&\quad \times \bigg( \sum_{n' \in \Z} \big| \ft{\cj{g} g} (- n') \big|^2 \sum_{n_{1, 1} \in \Z} | \ft g (n_{1, 1} - n' + n'') |^2 \bigg)^{1/2} \\
&\les \| \cj{g} g' \|_{L_x^{\frac43}} \| g \|_{L^2_x}^2 \| \cj{g} g \|_{L^2_x} \les \| g \|_{L_x^2}^2 \| g \|_{L_x^4}^3 \| g' \|_{L_x^2}
\end{align*}

\noi
Thus, by Plancherel's theorem in time, H\"older's inequality, the $L^4$-Strichartz estimate (Lemma~\ref{LEM:L4str}), Lemma \ref{LEM:emb}, and Lemma \ref{LEM:time}, we obtain
\begin{align*}
\int_{\G' (\tau)} |\II_2| d\tau_{1, 1} \cdots d\tau &\leq \| g \|_{L_t^\infty L_x^2}^2 \| g \|_{L_t^4 L_x^4}^3 \| g' \|_{L_t^4 L_x^2} \\
&\les \| g \|_{Y^{0, \frac 12 + \eps}}^2 \| g \|_{Y^{0, \frac 12 + \eps}}^3 \| g' \|_{Y^{0, \frac 12 - \eps}} \\
&\les \| w \|_{X^{s, \frac 12 + \eps}}^6,
\end{align*}

\noi
which is the desired estimate.

\medskip \noi
\textbf{Case 2:} $|n_{1, 1}|, |n_{1, 3}| \gg \nm$.

In this case, we note that
\begin{align*}
\max \{ &|\tau_{1, 1} + \mu (n_{1, 1})|, |\tau_{1, 2} + \mu (n_{1, 2})|, |\tau_{1, 3} + \mu (n_{1, 3})|, |\tau_{2} + \mu (n_{2})|, |\tau_3 + \mu (n_3)|, |\tau + \mu (n)| \} \\
&\ges \big| \big( |n_{1, 1}|^\al - |n_{1, 2}|^\al + |n_{1, 3}|^\al - |n_1|^\al \big) + \big( |n_1|^\al - |n_2|^\al - |n_3|^\al - |n|^\al \big) \\
&\quad - |\ft{u_0} (n_{1, 1})|^2 + |\ft{u_0} (n_{1, 2})|^2 + |\ft{u_0} (n_{1, 3})|^2 + |\ft{u_0} (n_2)|^2 - |\ft{u_0} (n_3)|^2 + |\ft{u_0} (n)|^2 \big|.
\end{align*}

\noi
We can assume that $|\tau + \mu(n)|$ is the maximum modulation, since all other cases are similar and simpler. As in the proof of Proposition \ref{Prop:triR}, we can ignore the $|\ft{u_0}|^2$ terms above in our later analysis. By Lemma \ref{LEM:factor}, we have
\begin{align}
\big| |n_{1, 1}|^\al - |n_{1, 2}|^\al + |n_{1, 3}|^\al - |n_1|^\al \big| &\ges |n_{1, 1} - n_{1, 2}| |n_{1, 2} - n_{1, 3}| |\nm'|^{\al - 2}, \label{factor1} \\
\big| |n_1|^\al - |n_2|^\al - |n_3|^\al - |n|^\al \big| &\ges |n_1 - n_2| |n_2 - n_3| |\nm|^{\al - 2}. \label{factor2}
\end{align}

\noi
where $\nm' = \max \{ |n_{1, 1}|, |n_{1, 2}|, |n_{1, 3}|, |n_1| \}$. In this case, we have
\begin{align*}
|n_{1, 1} - n_{1, 2}| &= |n_{1, 3} - n_1| \sim \nm' \gg \nm, \\
|n_{1, 2} - n_{1, 3}| &= |n_{1, 1} - n_1| \sim \nm' \gg \nm,
\end{align*}

\noi
so that $\eqref{factor1} \gg \eqref{factor2}$. Thus, we can use the maximum modulation to gain extra smoothing, which means that
\begin{align}
|\tau + \mu(n)| \ges |n_{1, 1} - n_{1, 2}| |n_{1, 2} - n_{1, 3}| |\nm'|^{\al - 2} \ges |n_{1, 2} - n_{1, 3}|^{1 + 2 \eps} |\nm'|^{\al - 1 - 2\eps}.
\label{modu_bdd}
\end{align}

We now define
\begin{align*}
\ft{g} (\tau_j, n_j) &= \jb{n_j}^s | \ft{u} (\tau_j, n_j) |, \\
\ft{g'} (\tau, n) &= \jb{n}^s \jb{\tau + |n|^\al}^{\frac 12 - \eps} \big| \ft{u \ind_{[0,t]}} (\tau, n) \big|, \\
\end{align*}

\noi
where $j \in \{(1,1), (1,2), (1,3), 2, 3\}$ and $\eps > 0$ sufficiently small. Given $s \geq \frac{2 - \al}{6}$, we can use the bounds 
\begin{align*}
&|n_{1, 1}|^{-s} \les |\nm'|^{-s}, \quad |n_{1, 2}|^{-s} \les |\nm'|^{-s}, \quad |n_{1, 3}|^{-s} \les |\nm'|^{-s}, \\
&|n_2|^{-s} \les |\nm|^{-s}, \quad |n_3|^{-s} \les |\nm|^{-s}, \quad |n|^{-s} \les |\nm|^{-s}.
\end{align*}

\noi
By \eqref{modu_bdd}, the change of variables $n' = n_{1, 2} - n_{1, 3}$ and $n'' = n_3 - n = n_2 - n_1$, the Cauchy-Schwarz inequality in $n_{1, 1}$ and then in $n'$, H\"older's inequalities in $n''$ and $n'$, and Young's convolution inequality, we obtain
\begin{align*}
|\II_2| &\les \sum_{\substack{n_{1, 1}, n_{1, 2}, n_3, n', n'' \in \Z \\ |n'|, |n''|, |n_{1, 1} - n' + n'' - n_3| \neq 0}} \frac{\ft g (n_{1, 1}) \ft g (n_{1, 2}) \ft g (n_{1, 2} - n') \ft g (n_{1, 1} - n' + n'') \ft g (n_3) \ft{g'} (n_3 - n'')}{|n'|^{\frac 12 + \eps} |n''| |n_{1, 1} - n' + n'' - n_3| } \\
&\les \sum_{\substack{n_3, n'' \in \Z \\ |n''| \neq 0}} \frac{1}{|n''|} \ft{g} (n_3) \ft{g'} (n_3 - n'') \sum_{\substack{n' \in \Z \\ |n'| \neq 0}} \bigg( \sum_{\substack{n_{1, 1} \in \Z \\ |n_{1, 1} - n' + n'' - n_3| \neq 0}} \frac{\ft g (n_{1, 1})^2}{|n_{1, 1} - n' + n'' - n_3|^2} \bigg)^{1/2} \\
&\quad \times \frac{1}{|n'|^{\frac 12 + \eps}} \bigg( \sum_{n_{1, 1} \in \Z} \big| \ft{\cj{g} g} (- n') \ft g (n_{1, 1} - n' + n'') \big|^2 \bigg)^{1/2} \\
&\leq \sum_{\substack{n_3, n'' \in \Z \\ |n''| \neq 0}} \frac{1}{|n''|} \ft{g} (n_3) \ft{g'} (n_3 - n'') \bigg( \sum_{n_{1, 1} \in \Z} \ft g (n_{1, 1})^2 \bigg)^{1/2} \\
&\quad \times \bigg( \sum_{\substack{n' \in \Z \\ |n'| \neq 0}} \frac{1}{|n'|^{1 + 2 \eps}} \big| \ft{\cj{g} g} (- n') \big|^2 \sum_{n_{1, 1} \in \Z} | \ft g (n_{1, 1} - n' + n'') |^2 \bigg)^{1/2} \\
&\les \| \ft g \|_{\l^2} \| \ft g \|_{\l^2}^2 \big\| \ft{\cj{g} g} \big\|_{\l^\infty} \big\| \ft{g'} \big\|_{\l^2} \les \| \ft g \|_{\l^2} \| \ft g \|_{\l^2}^4 \big\| \ft{g'} \big\|_{\l^2}.
\end{align*}

\noi
Thus, by considering time frequencies using the Cauchy-Schwarz inequality in $\tau$, Young's convolution inequalities, H\"older's inequality in $\tau_{1, 1}, \tau_{1, 2}, \tau_{1, 3}, \tau_2$, and Lemma \ref{LEM:time}, we get
\begin{align*}
&\int_{\G' (\tau)} |\II_2| d\tau_{1, 1} \cdots d\tau \\
&\leq \bigg\| \int_{\G' (\tau)} \| \ft g (\tau_{1, 1}) \|_{\l_{n_{1, 1}}^2} \| \ft g (\tau_{1, 2}) \|_{\l_{n_{1, 2}}^2} \| \ft g (\tau_{1, 3}) \|_{\l_{n_{1, 3}}^2} \| \ft g (\tau_{2}) \|_{\l_{n_2}^2} \| \ft g (\tau_{3}) \|_{\l_{n_3}^2} d\tau_{1, 1} \cdots d\tau_3 \bigg\|_{L^2_\tau} \big\| \ft{g'} \big\|_{L^2_\tau \l_n^2} \\
&\les \| \ft g \|_{\l_{n_{1, 1}}^2 L^1_{\tau_{1, 1}}} \| \ft g \|_{\l_{n_{1, 2}}^2 L^1_{\tau_{1, 2}}} \| \ft g \|_{\l_{n_{1, 3}}^2 L^1_{\tau_{1, 3}}} \| \ft g \|_{\l_{n_2}^2 L^1_{\tau_2}} \| \ft g \|_{L^2_{\tau_3} \l_{n_3}^2} \big\| \ft{g'} \big\|_{L^2_\tau \l_n^2} \\
&\les \| w \|_{X^{s, \frac 12 + \eps}}^6,
\end{align*}

\noi
as desired.

\medskip \noi
\textbf{Case 3:} $|n_{1, 1}|, |n_{1, 2}| \gg \nm$ and $|n_{1, 3}| \les \nm$, or $|n_{1, 2}|, |n_{1, 3}| \gg \nm$ and $|n_{1, 1}| \les \nm$.\footnote{In the proof of Proposition 3.4 in \cite{kwak}, the Case B-3 contains a mistake since the bound for the largest modulation does not necessarily hold. This issue can be fixed by proceeding as in our Case 3.}

By symmetry between $n_{1, 1}$ and $n_{1, 3}$, we can assume that $|n_{1, 2}|, |n_{1, 3}| \gg \nm$ and $|n_{1, 1}| \les \nm$. Note that
\begin{align*}
\big| |n_{1, 1}|^\al - |n_{1, 2}|^\al + |n_{1, 3}|^\al - |n_1|^\al \big| &\ges |n_{1, 1} - n_{1, 2}| |n_{1, 2} - n_{1, 3}| |\nm'|^{\al - 2} \\
&\ges |n_{1, 2} - n_{1, 3}||\nm'|^{\al - 1}.
\end{align*}

\noi
If we have
\begin{align*}
\big| |n_1|^\al - |n_2|^\al - |n_3|^\al - |n|^\al \big| \ll |n_{1, 2} - n_{1, 3}| |\nm'|^{\al - 1},
\end{align*}

\noi
then we can use the same steps as in Case 2 to obtain the desired estimate. If we have
\begin{align*}
\big| |n_1|^\al - |n_2|^\al - |n_3|^\al - |n|^\al \big| \ges |n_{1, 2} - n_{1, 3}| |\nm'|^{\al - 1},
\end{align*}

\noi
then we can proceed as in Case 1 with $|n_{1, 1} - n' + n'' - n_3|$ being replaced by $|n'|$ and with $|\nm'| \ges |n_1 - n_2| = |n''|$. 
\end{proof}


\begin{corollary}
\label{Cor:qtc}
Let $\al>2$, $\frac{2 - \al}{6} \leq s < 0$, $0 < T \leq 1$, and $u_0 \in C^{\infty}(\T)$. 
Suppose that $w$ is a global smooth solution to \eqref{fnls3} with $w(0)=u_0$. Then,
for $0 < \eps \ll 1$ small enough, we have
\begin{align*}
 \norm{\RR_2(w)}_{L_T^2 H_x^s} \les \Big( \|u_0\|^4_{H^s}+  \| w \|_{X_T^{s, \frac 12 + \eps}}^4 +  \norm{w}_{X_T^{s,\frac 12 + \eps}}^6 \Big) \norm{w}_{X^{s,\frac 12 + \eps}_{T}} 
\end{align*}
\end{corollary}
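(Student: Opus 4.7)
The plan is to derive this corollary as an almost immediate consequence of Proposition \ref{Prop:quinc} combined with the bound \eqref{eqI} and the identity \eqref{EnN}. The role of Proposition \ref{Prop:quinc} is precisely to produce a pointwise (in $t$ and $n$) estimate for the quantity $|\ft w(t,n)|^2 - |\ft{u_0}(n)|^2$, while \eqref{eqI} reduces the $L^2_T H_x^s$-norm of $\RR_2(w)$ to exactly such a pointwise supremum multiplied by a space-time Sobolev norm of $w$.

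First, I would start from \eqref{eqI} applied on the time interval $[-T, T]$:
\begin{align*}
\|\RR_2(w)\|_{L_T^2 H_x^s} \les \Big( \sup_{\substack{|t|\leq T \\ n \in \Z}} \big| |\ft w(t,n)|^2 - |\ft{u_0}(n)|^2 \big| \Big) \|w\|_{L_T^2 H_x^s}.
\end{align*}
Next, since $w$ is a smooth global solution of \eqref{fnls3} with $w(0) = u_0$, the identity \eqref{EnN} gives
\begin{align*}
\big| |\ft w(t,n)|^2 - |\ft{u_0}(n)|^2 \big| \leq 2 \bigg| \int_0^t \sum_{\G(n)} e^{i t' \Psi(\cj n)} \ft w(t',n_1) \cj{\ft w}(t',n_2) \ft w(t',n_3) \cj{\ft w}(t',n) \, dt' \bigg|,
\end{align*}
uniformly in $|t|\leq T$ and $n \in \Z$. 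Taking the supremum on both sides and applying Proposition \ref{Prop:quinc} directly yields
\begin{align*}
\sup_{\substack{|t|\leq T \\ n \in \Z}} \big| |\ft w(t,n)|^2 - |\ft{u_0}(n)|^2 \big| \les \|u_0\|_{H^s}^4 + \|w\|_{X_T^{s, \frac 12 + \eps}}^4 + \|w\|_{X_T^{s,\frac 12 + \eps}}^6.
\end{align*}

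Finally, I would control $\|w\|_{L^2_T H_x^s}$ by the $X^{s, \frac 12 + \eps}_T$-norm. Since $T \leq 1$, the embedding $X_T^{s, \frac 12 + \eps} \hookrightarrow C([-T,T]; H^s(\T))$ from Lemma \ref{LEM:emb} gives
\begin{align*}
\|w\|_{L^2_T H_x^s} \leq (2T)^{1/2} \|w\|_{L^\infty_T H_x^s} \les \|w\|_{X_T^{s, \frac 12 + \eps}}.
\end{align*}
Combining the three displays above immediately yields the desired bound, so there is no real obstacle here; the corollary is a direct assembly of Proposition \ref{Prop:quinc}, the preparatory estimate \eqref{eqI}, and the elementary embedding from Lemma \ref{LEM:emb}. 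All of the analytical difficulty has already been absorbed into the proof of Proposition \ref{Prop:quinc} (namely, the normal-form-type integration by parts and the case analysis on high-high interactions).
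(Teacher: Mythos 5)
Your proposal is correct and follows exactly the paper's own (very brief) argument: combine \eqref{eqI}, the identity \eqref{EnN}, and Proposition \ref{Prop:quinc}, then absorb $\|w\|_{L^2_T H^s_x}$ into $\|w\|_{X_T^{s,\frac12+\eps}}$ via the embedding. Nothing further is needed.
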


\begin{proof}
The estimate follows directly from \eqref{eqI}, \eqref{EnN}, and Proposition \ref{Prop:quinc}.
\end{proof}

\begin{remark} \rm
\label{RMK:sum_n}
From the proof of Proposition \ref{Prop:quinc}, we can replace the supremum in $n \in \Z$ in~\eqref{quin1} by a summation in $n \in \Z$. 
\end{remark}

\begin{remark} \rm
\label{RMK:fub_ibp}
In Proposition \ref{Prop:quinc} and Corollary \ref{Cor:qtc}, the solution $w$ does not need to be smooth. The proof also works if we only require $w \in X_T^{s, \frac 12 + \eps}$, provided that we justify the use of Fubini's theorem and the integration by parts in \eqref{fub_ibp}.

To justify Fubini's theorem in \eqref{fub_ibp}, since $|t| \leq 1$, by Hausdorff-Young's inequality and (the $X^{s, b}$-version) of \eqref{tri_goal}, we have
\begin{align*}
\int_0^t &\sum_{\G(n)} \big| \ft w (t', n_1) \cj{\ft{w}} (t', n_2) \ft w (t', n_3) \cj{\ft{w}} (t', n) \big| dt' \\
&\leq \jb{n}^{-2s} \sum_{\G(n)} \jb{n}^{2s}  \int_{\tau_1 - \tau_2 + \tau_3 = \tau} \big| \ft w (\tau_1, n_1) \cj{\ft{w}} (\tau_2, n_2) \ft w (\tau_3, n_3) \cj{\ft{w}} (\tau, n) \big| d\tau_1 d\tau_2 d\tau_3 d\tau \\
&\les \| w \|_{X^{s, \frac 12 + \eps}}^3 \| w \|_{X^{s, \frac 12 - 2 \eps}} < \infty,
\end{align*}

\noi
which is a valid condition for applying Fubini's theorem.

To justify the integration by parts in \eqref{fub_ibp}, since we only worked with fixed spatial frequencies, we only need to show that for $f, g \in H^{\frac 12 + \eps} (\R)$ and $t \in \R$, we have
\begin{align*}
\int_{0}^{t} \frac{d}{dt'} f(t') \cdot g(t') dt' = f(t) g(t) - f(0) g(0) - \int_{0}^{t} f(t') \cdot \frac{d}{dt'} g(t') dt'.
\end{align*}

\noi
We first note that for any $h \in H^{\frac 12 + \eps} (\R)$ and $t \in \R$, by Plancherel's theorem, we obtain
\begin{align*}
\int_{0}^{t} \frac{d}{dt'} h(t') dt' = \int_\R i \tau \ft h (\tau) \ft{\ind_{[0, t]}} (-\tau) d\tau = \int_\R \ft h (\tau) \big( e^{i t \tau} - 1 \big) d\tau = h(t) - h(0),
\end{align*}

\noi
so that it suffices to show the product rule
\begin{align}
\label{prod}
\frac{d}{dt} (fg) = \frac{d}{dt} f \cdot g + f \cdot \frac{d}{dt} g.
\end{align}

\noi
For convenience, since we only care about the support of $f$ and $g$ on $[0, t]$ and $[0, t] \cong \T$, we view $f$ and $g$ as functions on $\T$. Let $\{f_n\}_{n \in \N}$ and $\{g_n\}_{n \in \N}$ be two sequences of smooth functions such that $f_n \to f$ and $g_n \to g$ in $H^{\frac 12 + \eps} (\T)$ as $n \to \infty$. Since $H^{\frac 12 + \eps} (\T)$ is an algebra, it is easy to see that $f_n g_n \to fg$ in $H^{\frac 12 + \eps} (\T)$, so that $\frac{d}{dt} (f_n g_n) \to \frac{d}{dt} (fg)$ in $H^{-\frac 12 + \eps} (\T)$. It remains to show that $\frac{d}{dt} f_n \cdot g_n \to \frac{d}{dt} f \cdot g$ in $H^{-\frac 12 + \eps} (\T)$ (the convergence $f_n \cdot \frac{d}{dt} g_n \to f \cdot \frac{d}{dt} g$ is similar). This follows directly from Lemma \ref{LEM:prod} and Sobolev's inequality, since
\begin{align*}
\Big\| \frac{d}{dt} f \cdot g \Big\|_{H^{-\frac 12 + \eps}} &\les \Big\| \jb{\nb}^{-\frac  12 + \eps} \frac{d}{dt} f \Big\|_{L^2} \big\| \jb{\nb}^{\frac 12 - \eps} g \big\|_{L^{\frac{2}{1 - 2\eps}}} \\
&\les \Big\| \frac{d}{dt} f \Big\|_{H^{-\frac 12 + \eps}} \| g \|_{H^{\frac 12 + \eps}}.
\end{align*}

\noi
Thus, we have \eqref{prod}, and so the integration by parts is justified. Note that \eqref{prod} also justifies the product rules applied in \eqref{fub_ibp} and \eqref{EnN}.
\end{remark}

\subsection{Local well-posedness on the circle}
\label{SEC:LWP2}
In this subsection, we prove Theorem \ref{TM:LWP2}, local well-posedness of the cubic fNLS \eqref{fnls3}. To prove local well-posedness, we show existence, uniqueness, and continuous dependence on the initial data.

\medskip \noi
\textbf{Existence of a solution:}

\smallskip
In this part, we follow \cite{MT} and show the existence of a solution to \eqref{fnls3} in $X_T^{s, \frac 12 + \eps}$ with initial data $u_0 \in  H^s (\T)$ for any $\frac{2 - \al}{6} \leq s < 0$, some $0 < T \leq 1$, and some $\eps > 0$ sufficiently small. Let $\{ u_{0,j} \}_{j \in \N}$ be a sequence in $C^{\infty}(\T)$
such that $u_{0,j} \to u_0$ in $H^s (\T)$ as $j \to \infty$. Let $w_j$ be the smooth global solutions to \eqref{fnls3} with $w_j|_{t = 0} = u_{0, j}$.
Then, $w_j$ satisfies
\begin{align}
\label{slow2}
\begin{split}
\ft{w_j}(t,n) &= e^{-it |n|^\al}\ft{u_{0,j}}(n) -i \int_0^te^{-i(t-t')|n|^\al}\sum_{\G(\cj n)}e^{it\Psi(\bar n)}  \ft{w_j}(t', n_1)\cj{\ft{w_j}} (t', n_2) \ft{w_j}(t', n_3) dt'  \\
&\quad + i\int_0^te^{-i(t-t') |n|^\al}(|\ft{w_j}(t', n)|^2-|\ft{u_{0,j}}(n)|^2) \ft{w_j}(t', n) dt'.
\end{split}
\end{align}

\noi
Let $K > 0$ be such that $\| u_{0, j} \|_{H^s} \leq K$ for all $j \in \N$. Let $0 < t \leq T \leq 1$. By Lemma \ref{LEM:lin}, Proposition \ref{Prop:triR}, and Corollary \ref{Cor:qtc}, we have
\begin{align*}
\| w_j \|_{X_t^{s, \frac 12 + \eps}} \leq CK + C T^\eps \| w_j \|_{X_t^{s, \frac 12 + \eps}}^3 + C T^\eps \Big( K^4 +  \| w \|_{X_t^{s, \frac 12 + \eps}}^4 +  \norm{w}_{X_t^{s,\frac 12 + \eps}}^6 \Big) \norm{w}_{X^{s,\frac 12 + \eps}_{t}}
\end{align*}

\noi
for $\eps > 0$ sufficiently small and some constant $C > 0$. By Lemma \ref{LEM:Xsb_cty} and choosing $T = T(K) > 0$ sufficiently small, a standard continuity argument yields
\begin{align}
\| w_j \|_{X_T^{s, \frac 12 + \eps}} \leq 2CK
\label{wj_bdd}
\end{align}

\noi
uniformly in $j \in \N$.

We now show that the sequence $\{w_j\}_{j \in \N}$ converges to a limit $w$, which satisfies \eqref{fnls3} with $w|_{t = 0} = u_0$. By \eqref{wj_bdd}, Lemma \ref{LEM:emb}, and the Banach-Alaoglu theorem, we can find a subsequence $\{ w_{j_k} \}_{k \in \N}$ such that
\begin{align*}
w_{j_k} \longrightarrow w \quad \text{weakly-* in } L^\infty ([-T, T]; H^s(\T)).
\end{align*}

\noi
Also, by using the Arzel\`a-Ascoli theorem and further extracting a subsequence, we obtain that $\ft{w_{j_k}} (n) \to \ft{w} (n)$ in $C([-T, T])$ for each $n \in \Z$. Indeed, for each $k \in \N$, $n \in \Z$, and $t_1, t_2 \in \R$, we have
\begin{align*}
| \ft{w_{j_k}} (t_1, n) - \ft{w_{j_k}} (t_2, n) | &\leq \jb{n}^{-s'} \bigg\| \int_\R \jb{n}^{s'} \ft{w_{j_k}} (\tau, n) (e^{i t_1 \tau} - e^{i t_2 \tau}) d\tau  \bigg\|_{\l_n^2} \\
&\les |t_1 - t_2|^\frac{\eps}{2} \jb{n}^{-s'} \big\| \jb{n}^{s'} \jb{\tau + |n|^\al}^{\frac 12 + \frac{\eps}{2}} |\tau|^{\frac{\eps}{2}} \ft{w_{j_k}} (\tau, n) \big\|_{\l_n^2 L_\tau^2} \\
&\les |t_1 - t_2|^\frac{\eps}{2} \jb{n}^{-s'} \| w_{j_k} \|_{X_T^{s, \frac 12 + \eps}}
\end{align*}

\noi
for some $s' < s$, which allows us to apply the Arzel\`a-Ascoli theorem due to \eqref{wj_bdd}. Thus, for any $N \in \N$, we have
\begin{align}
\pi_{\leq N} w_{j_k} \longrightarrow \pi_{\leq N} w \quad \text{in } C([-T, T]; H^s (\T)).
\label{convN}
\end{align}

\noi
Furthermore, using \eqref{wj_bdd} again, we can further extract a subsequence such that
\begin{align}
\jb{n}^s e^{it |n|^\al} \ft{w_{j_k}} (t, n) \longrightarrow \jb{n}^s e^{it |n|^\al} \ft{w} (t, n) \quad \text{weakly in } H^{\frac 12 + \eps} ([-T, T]; L^2 (\Z)).
\label{conv_weak}
\end{align}

\noi
This implies that
\begin{align*}
\| w \|_{X_T^{s, \frac 12 + \eps}} \leq \liminf_{k \to \infty} \| w_{j_k} \|_{X_T^{s, \frac 12 + \eps}} \leq 2CK,
\end{align*}

\noi
and so $w \in X_T^{s, b}$.

It remains to show that $w$ indeed satisfies \eqref{fnls3}. 
Let us rewrite \eqref{slow2} as
\begin{align}
\label{slow3}
\begin{split}
&\ft{w_{j_k}} (t, n) -e^{-it |n|^\al }\ft{u}_{0,j_k}(n)    \\
&\quad + i\int_0^t e^{-i(t-t') |n|^\al }\sum_{\G(n)}   e^{it\Psi(\cj n)}   \ft{\pi_{\leq N} w_{j_k}} (t', n_1) \cj{\ft{\pi_{\leq N} w_{j_k}}} (t', n_2) \ft{\pi_{\leq N} w_{j_k}}(t', n_3) dt'       \\
&\quad -i\int_0^t e^{-i(t-t') |n|^\al }(|\ft{w_{j_k}} (t', n)|^2-|\ft{u}_{0,j_k}(n)|^2)\ft{w}_{j_k}(t', n)  dt'     \\
&=-i\int_0^t e^{-i(t-t') |n|^\al }  \sum_{\G(n)}  e^{it\Psi(\cj n)}   \bigg(  \ft{w_{j_k}} (t', n_1) \cj{\ft{w_{j_k}}} (t', n_2) \ft{w_{j_k}}(t', n_3)     \\
&\quad -\ft{\pi_{\leq N} w_{j_k}}  (t', n_1) \cj{\ft{\pi_{\leq N} w_{j_k}}} (t', n_2) \ft{\pi_{\leq N} w_{j_k}} (t', n_3)    \bigg) dt'
\end{split}
\end{align}

\noi
We define the right-hand-side of \eqref{slow3} as $\ft{R_{j_k, N}} (t, n)$.
Note that for $s' < s$, we have
\begin{align*}
\| \pi_{\leq N} w_{j_k}  \|_{X^{s, \frac 12 + \eps}_T}&\leq \| w_{j_k}  \|_{X^{s, \frac 12 + \eps}_T},\\
\|w_{j_k} - \pi_{\leq N} w_{j_k}  \|_{X^{s', \frac 12 + \eps}_T}  &\leq     CN^{-(s-s')} \|  w_{j_k}  \|_{X^{s, \frac 12 + \eps}_T}
\end{align*}

\noi
Thus, by Proposition \ref{Prop:triR} and \eqref{wj_bdd}, we have that for $\max \{ \frac{2 - \al}{4}, \frac{1 - \al}{6} \} < s' < s$,
\begin{align}
\label{RN_bdd}
\| R_{j_k, N}  \|_{X^{s', \frac 12 + \eps}_T} &\les
\| w_{j_k}  \|_{X^{s', \frac 12 + \eps}_T}^2    \|w_{j_k} - \pi_{\leq N} w_{j_k}  \|_{X^{s', \frac 12 + \eps}_T} \les K^3 N^{- (s - s')}.
\end{align}



\noi
Going back to \eqref{slow3} and using \eqref{RN_bdd}, we obtain that for any $M \in \N$,
\begin{align*}
\bigg\|
\sum_{\substack{n \in \Z \\ |n| \leq M}} &\jb{n}^{s'} \bigg( 
\ft{w_{j_k}} (t,n) - e^{-it |n|^\al }\ft{u_{0,j_k}}(n) \\
&\quad +i\int_0^t e^{-i(t-t') |n|^\al }\sum_{\G(n)}  e^{it\Psi(\cj n)}    \ft{\pi_{\leq N} w_{j_k}} (t', n_1) \cj{\ft{\pi_{\leq N} w_{j_k}}} (t', n_2) \ft{\pi_{\leq N} w_{j_k}} (t', n_3) dt'  \\
&\quad -i\int_0^t e^{-i(t-t') |n|^\al }(|\ft{w_{j_k}}(t', n)|^2-|\ft{u_{0,j_k}}(n)|^2) \ft{w_{j_k}} (t', n) dt' \bigg) e^{inx}
\bigg\|_{L^2_{T} L^2_x} \notag \\
&\leq C K^3 N^{-(s-s')},
\end{align*}

\noi 
for $\max \{ \frac{2 - \al}{4}, \frac{1 - \al}{6} \} < s' < s$ and some constant $C > 0$. Since we have a finite sum, by \eqref{convN} and the dominated convergence theorem, we can let $j_k \to \infty$ and obtain
\begin{align}
\label{sum_nM}
\begin{split}
\bigg\|
\sum_{\substack{n \in \Z \\ |n| \leq M}} &\jb{n}^{s'} \bigg( 
\ft{w} (t,n) - e^{-it |n|^\al }\ft{u_{0}}(n) \\
&\quad +i\int_0^t e^{-i(t-t') |n|^\al }\sum_{\G(n)}  e^{it\Psi(\cj n)}    \ft{\pi_{\leq N} w} (t', n_1) \cj{\ft{\pi_{\leq N} w}} (t', n_2) \ft{\pi_{\leq N} w} (t', n_3) dt'  \\
&\quad -i\int_0^t e^{-i(t-t') |n|^\al }(|\ft{w}(t', n)|^2-|\ft{u_{0}}(n)|^2) \ft{w} (t', n) dt' \bigg) e^{inx}
\bigg\|_{L^2_{T} L^2_x}  \\
&\leq C K^3 N^{-(s-s')},
\end{split}
\end{align}
By the monotone convergence theorem, we can take $M \rightarrow \infty$ to get rid of the restriction ``$|n| \leq M$'' in \eqref{sum_nM}. To show that $w$ satisfies \eqref{fnls3}, it then remains to show
\begin{align*}
    \bigg\|
\sum_{n \in \Z} &\jb{n}^{s'} \bigg( i\int_0^t e^{-i(t-t') |n|^\al }\sum_{\G(n)}  e^{it\Psi(\cj n)}    \ft{w}(t', n_1) \cj{\ft{w}}(t', n_2) \ft{w}(t', n_3) dt' \\
& -i\int_0^t e^{-i(t-t') |n|^\al }\sum_{\G(n)}  e^{it\Psi(\cj n)}    \ft{\pi_{\leq N} w}(t', n_1) \cj{\ft{\pi_{\leq N} w}} (t', n_2)\ft{\pi_{\leq N} w}(t', n_3)  dt' \bigg) e^{inx}
\bigg\|_{L^2_{T} L^2_x} \longrightarrow 0
\end{align*}

\noi
as $N \to \infty$, which follows from a similar way as \eqref{RN_bdd}. This finishes the proof of existence of a solution to \eqref{fnls3} in $X_T^{s, b}$.

\medskip \noi
\textbf{Uniqueness of the solution:}

\smallskip
We now show the uniqueness of the solution to the cubic fNLS \eqref{fnls3}. 
Let $w_1$ and $w_2$ be solutions to \eqref{fnls3} in $X_T^{s, \frac 12 + \eps}$ with initial data $u_0 \in  H^s (\T)$ for $\frac{2 - \al}{6} \leq s < 0$, some $0 < T \leq 1$, and some $\eps > 0$ sufficiently small. Let $z = w_1 - w_2$, which satisfies
\begin{align*}
\begin{split}
\dt \ft{z}(n) &+ i|n|^\al \ft{z}(n) \\
&= i\big( |\ft{w_1}(n)|^{2}-|\ft{u_{0}}(n)|^{2} \big) \ft{z}(n)+ i \big( |\ft{w_1}(n)|^{2}-|\ft{w_2}(n)|^{2} \big)\ft{w_2}(n) \\
    &\quad -i\sum_{\G (n)} e^{it \Psi (\cj{n})} \big( \ft{z}(n_1)\cj{\ft{w_1}}(n_2)\ft{w_1}(n_3) + \ft{w_2}(n_1)\cj{\ft{z}}(n_2)\ft{w_1}(n_3) + \ft{w_2}(n_1) \cj{\ft{w_2}}(n_2)\ft{z}(n_3) \big)  \\
    & =: \ft{\1}(n)+\ft{\II}(n)+\ft{\III}(n)
\end{split}
\end{align*}

\noi
Thus, by using the Duhamel formulation and Lemma \ref{LEM:lin}, we have
\begin{align}
\|w\|_{X_{T}^{s,\frac{1}{2} + \eps}} \les T^\eps \Big( \| \1 \|_{L_{T}^{2} H^{s}} + \| \II \|_{L_{T}^{2} H^{s}} + \| \III \|_{X_{T}^{s,-\frac{1}{2} + 2\eps}} \Big)
\label{uni1}
\end{align}

\noi
For $\1$, by Corollary \ref{Cor:qtc} (in particular, Remark \ref{RMK:fub_ibp}), we have
\begin{align}
\| \1 \|_{L_T^2 H^s} \les C \Big( \| u_0 \|_{H^s}, \| w_1 \|_{X_T^{s, \frac 12 + \eps}} \Big) \| z \|_{X_T^{s, \frac 12 + \eps}}.
\label{uni2}
\end{align}

\noi
For $\III$, by Proposition \ref{Prop:triR}, we have
\begin{align}
\| \III \|_{X_T^{-\frac 12 + 2\eps}} \les C \Big(\| w_1 \|_{X_T^{s, \frac 12 + \eps}}, \| w_2 \|_{X_T^{s, \frac 12 + \eps}} \Big) \| z \|_{X_T^{s, \frac 12 + \eps}}.
\label{uni3}
\end{align}

We now deal with $\II$. For $|t| \leq T$, we note from \eqref{fNLS6} that
\begin{align*}
|\ft{w_1} (t, n)|^{2}-|\ft{w_2}(t, n)|^{2} &= \int_0^t \frac{d}{dt'} |\ft{w_1}(t', n)|^{2} - \frac{d}{dt'} |\ft{w_2}(t', n)|^{2} dt' \\
&= 2 \Re \int_0^t \dt \ft{w_1} (t', n) \cj{\ft{z}} (t', n) + \dt \ft{z} (t', n) \cj{\ft{w_2}} (t', n) dt' \\
&= 2 \Im \int_0^t \sum_{\G (n)} e^{it' \Psi (\cj{n})} \big( \ft{w_1} (t', n_1) \cj{\ft{w_1}} (t', n_2) \ft{w_1} (t', n_3) \cj{\ft{z}} (t', n) \\
&\quad + \ft{z} (t', n_1) \cj{\ft{w_1}} (t', n_2) \ft{w_1} (t', n_3) \cj{\ft{w_2}} (t', n) \\
&\quad + \ft{w_2} (t', n_1) \cj{\ft{z}} (t', n_2) \ft{w_1} (t', n_3) \cj{\ft{w_2}} (t', n) \\
&\quad + \ft{w_2} (t', n_1) \cj{\ft{w_2}} (t', n_2) \ft{z} (t', n_3) \cj{\ft{w_2}} (t', n)\big) dt'
\end{align*}

\noi
Thus, we can now use similar steps as in Proposition \ref{Prop:quinc} (justified in Remark \ref{RMK:fub_ibp}) to obtain
\begin{align*}
\sup_{\substack{|t| \leq T \\ n \in \Z}} |\ft{w_1} (t, n)|^{2}-|\ft{w_2}(t, n)|^{2} \les C \Big( \| u_0 \|_{H^s}, \| w_1 \|_{X_T^{s, \frac 12 + \eps}}, \| w_2 \|_{X_T^{s, \frac 12 + \eps}} \Big) \| z \|_{X_T^{s, \frac 12 + \eps}},
\end{align*}

\noi
so that
\begin{align}
\| \II \|_{L_T^2 H^s} \les C \Big( \| u_0 \|_{H^s}, \| w_1 \|_{X_T^{s, \frac 12 + \eps}}, \| w_2 \|_{X_T^{s, \frac 12 + \eps}} \Big) \| z \|_{X_T^{s, \frac 12 + \eps}}.
\label{uni4}
\end{align}

\noi
Combining \eqref{uni1}, \eqref{uni2}, \eqref{uni3}, and \eqref{uni4} and taking $T > 0$ to be sufficiently small, we obtain $\| z \|_{X_T^{s, \frac 12 + \eps}} = 0$, and so we finish the proof of uniqueness.

\medskip \noi
\textbf{Continuous dependence of solutions on initial data:}

\smallskip
Lastly, we prove the continuous dependence of solutions of \eqref{fnls3} on initial data. Again, we consider the solutions in $X_T^{s, \frac 12 + \eps}$ for $\frac{2 - \al}{6} \leq s < 0$, some $0 < T \leq 1$, and some $\eps > 0$ sufficiently small.
Let $u_0 \in H^s(\T)$ and $\{u_{0,j}\}_{j \in \N}$ be a sequence of initial data in $H^s(\T)$ such that 
\begin{align*}
u_{0,j} \to  u_0      \quad 
\text{in } H^s (\T).
\end{align*}

\noi
Let $w_j$ and $w$ to be the solutions of \eqref{fnls3} with $w_j|_{t = 0} = u_{0, j}$ and $w|_{t = 0} = u_0$. 
Note that the local existence time depends only on the $H^s$-norm of the initial data, so that we can assume that $w$ and $w_j$ exist on $[-T, T]$. By the existence part and the uniqueness part above, we have the convergence \eqref{convN} and the convergence \eqref{conv_weak} for the whole sequence $\{w_j\}_{j \in \N}$. We want to show that $w_j$ converges to $w$ strongly in $C([-T, T]; H^s (\T))$.

Let $\eta > 0$. By the convergence \eqref{conv_weak}, $\{w_j\}_{j \in \N}$ and $w$ are bounded in $X_T^{s, \frac 12 + \eps}$ uniformly in $j \in \N$. By Proposition \ref{Prop:quinc}, Remark \ref{RMK:sum_n}, and Remark \ref{RMK:fub_ibp}, for any $N \in \N$, we have
\begin{align}
\label{cont1}
\begin{split}
\| \pi_{> N} w_j \|_{C_T H^s}^2 &\leq \sum_{|n| > N} \jb{n}^{2s} |\ft{u_{0, j}} (n)|^2 + \sup_{|t| \leq T} \sum_{|n| > N} \jb{n}^{2s} \big| |\ft{w_j} (t, n)|^2 - |\ft{u_{0, j}} (n)|^2 \big| \\
&\les \| \pi_{> N} u_{0, j} \|_{H^s}^2 + N^{2s} C\Big( \| w_j \|_{X_T^{s, \frac 12 + \eps}} \Big)
\end{split}
\end{align}

\noi
and
\begin{align}
\label{cont2}
\begin{split}
\| \pi_{> N} w \|_{C_T H^s}^2 &\leq \sum_{|n| > N} \jb{n}^{2s} |\ft{u_{0}} (n)|^2 + \sup_{|t| \leq T} \sum_{|n| > N} \jb{n}^{2s} \big| |\ft{w} (t, n)|^2 - |\ft{u_{0}} (n)|^2 \big| \\
&\les \| \pi_{> N} u_{0} \|_{H^s}^2 + N^{2s} C\Big( \| w \|_{X_T^{s, \frac 12 + \eps}} \Big).
\end{split}
\end{align}

\noi
Also, for any $j \in \N$, we have
\begin{align}
\| \pi_{> N} u_{0, j} \|_{H^s} \leq \| u_{0, j} - u_0 \|_{H^s} + \| \pi_{> N} u_0 \|_{H^s}.
\label{cont3}
\end{align}

\noi
Thus, by \eqref{cont1}, \eqref{cont2}, and \eqref{cont3}, for $j \in \N$ and $N \in \N$ large enough, we have
\begin{align*}
\| \pi_{> N} w_j \|_{C_T H^s} < \frac{\eta}{3} \quad \text{and} \quad \| \pi_{> N} w \|_{C_T H^s} < \frac{\eta}{3}.
\end{align*}

\noi
Therefore, by the convergence \eqref{convN}, we have
\begin{align*}
\| w_j - w \|_{C_T H^s} &\leq \| \pi_{\leq N} w_j - \pi_{\leq N} w \|_{C_T H^s} + \| \pi_{> N} w_j \|_{C_T H^s} + \| \pi_{> N} w \|_{C_T H^s} \\
&< \frac{\eta}{3} + \frac{\eta}{3} + \frac{\eta}{3} = \eta,
\end{align*}

\noi
and so we finish the proof of continuous dependence on initial data.

\subsection{Global well-posedness on the circle}
\label{SEC:AClaw2}
In this subsection, we prove Theorem \ref{TM:GWP2}, global well-posedness of the cubic fNLS \eqref{fnls3} on $\T$ by employing the $I$-method. Our goal is to establish an a priori bound of the $H^s$-norm of the solution $w$ to \eqref{fnls3}. The proof is similar to the real line setting, but the main difference is that we do not have scaling symmetry on $\T$. Instead, in order to exploit the subcritical nature of the equation, we use the $H^s_M$-norm as defined in~\eqref{SobM}, where $M \in \N$ is to be chosen later.

In the following, we restrict our attention on $\frac{2-\al}{6} \leq s < 0$.
Given $N \geq 1$, we define a 
smooth even monotone function $m_M: \R \to [0, 1]$ such that
\begin{equation}
\label{iop2}
m_M(\xi) :=
\begin{cases}
1 & |\xi| < N \\
\Big( \frac{M + |\xi|}{N} \Big)^s & |\xi| > 2N,
\end{cases}
\end{equation}

\noi
where $N \in \N$ is a large parameter to be determined later. We then define the Fourier multiplier operator $I_M = I_{M, N} : H_M^s (\T) \to L^2 (\T)$ such that $\ft{I_M u} (n) = m_M(n) \ft u (n)$ for any $n \in \Z$.
As in the real line setting, 
$I_M$ acts as an identity operator on low frequencies and as a smoothing operator on high frequencies.
Note that we have the following bounds:
\begin{align}
\| u \|_{ H^s_M} \les  \|I_M u\|_{L^2}
\les  N^{-s}\| u \|_{ H^s_M}.
\label{I2}
\end{align}

We first apply the $I_M$-operator to our cubic fNLS \eqref{fnls3} and obtain
\begin{align}
\label{Isys2}
\begin{cases}
i \dt  I_M w = I_M D^\al w + I_M \NN_2(w) - I_M \RR_2 (w) \\
I_M w|_{t= 0} = I_M u_0,
\end{cases} 
\end{align}

\noi
where $\NN_2$ and $\RR_2$ are defined in \eqref{NN1O} and \eqref{NN2O}, respectively.
Below, we show a local well-posedness result for the $I$-system \eqref{Isys2} as a variant of local well-posedness of \eqref{fnls3}.

\begin{proposition}
\label{PROP:ILWP2}
Let $\al>2$, $\frac{2-\al}{6} \leq s <0$, and $1 \leq M \les N^2$. 
Then, given any $u_0 \in H^s (\T)$, there exists $T = T (\| I_M u_0 \|_{L^2}) > 0$
and a unique solution $I_M w\in C([-T, T] ; L^2(\T) ) $ to the $I$-system \eqref{Isys2} such that $ I_M w\in C( [0,\dl];L^2(\T) ) $.  
Furthermore, there exists $\eta> 0$ independent of $M$ and $N$ such that for any $t_0 \in \R$ and $w(t_0) \in H^s (\T)$ satisfying
\begin{align*}
\| I_M w (t_0) \|_{L^2} \leq \eta,
\end{align*}

\noi
the solution $I_M w$ to the $I$-system \eqref{Isys2} exists on the interval $J = [t_0 - 1, t_0 + 1]$ with the bound
\begin{align*}
\| I_M w \|_{X_J^{0, \frac 12 + \eps}} \leq 2 \eta
\end{align*}

\noi
for some $\eps > 0$ sufficiently small.
\end{proposition}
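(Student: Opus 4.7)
The strategy is to transfer the compactness argument from the proof of Theorem \ref{TM:LWP2} to the $I_M$-modified system \eqref{Isys2}. I would approximate $u_0$ by a sequence of smooth data $u_{0,j} \to u_0$ in $H^s(\T)$, take $w_j$ to be the smooth global solutions of \eqref{fnls3} with $w_j(0) = u_{0,j}$, and apply $I_M$ to the Duhamel formulation associated with \eqref{Isys2}. Combined with Lemma \ref{LEM:lin}, establishing a uniform a priori bound for $\|I_M w_j\|_{X_T^{0,\frac{1}{2}+\eps}}$ reduces to proving an $I_M$-trilinear estimate
\begin{equation*}
\|I_M \NN_2(w_1, w_2, w_3)\|_{X_T^{0,-\frac{1}{2}+2\eps}} \les \prod_{k=1}^3 \|I_M w_k\|_{X_T^{0,\frac{1}{2}+\eps}},
\end{equation*}
together with an $I_M$-resonant estimate controlling $\|I_M \RR_2(w)\|_{L_T^2 L_x^2}$ by powers of $\|I_M u_0\|_{L^2}$ and $\|I_M w\|_{X_T^{0,\frac{1}{2}+\eps}}$, in parallel with Corollary \ref{Cor:qtc}. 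Both must be uniform in $1 \leq M \les N^2$.

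By duality, both estimates reduce to the corresponding results of Proposition \ref{Prop:triR} and Corollary \ref{Cor:qtc} once we verify the pointwise multiplier bound
\begin{equation*}
\frac{m_M(n)(M+|n|)^{-s}}{\prod_{k=1}^3 m_M(n_k)(M+|n_k|)^{-s}} \les 1 \quad \text{on } \G(n),
\end{equation*}
in parallel with the real-line argument from Proposition \ref{PROP:ILWP1}. This relies on $(i)$ $m_M(\xi) \leq 1$, $(ii)$ $m_M(\xi)(M+|\xi|)^{-s} \ges 1$, and $(iii)$ the fact that two of the frequencies $n, n_1, n_2, n_3$ are comparable on the hyperplane $\G(n)$. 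After placing the largest frequency among the $n_k$'s, the remaining factors are absorbed using $(ii)$, and the trilinear/resonant sums inherit the estimates already proved in Section~\ref{SEC:WPT}.

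With the multilinear bounds in hand, a standard continuity argument (using Lemma \ref{LEM:Xsb_cty} to ensure continuity of $T \mapsto \|I_M w_j\|_{X_T^{0,\frac{1}{2}+\eps}}$) produces a local existence time $T = T(\|I_M u_0\|_{L^2}) > 0$ together with the uniform bound $\|I_M w_j\|_{X_T^{0,\frac{1}{2}+\eps}} \les \|I_M u_{0,j}\|_{L^2}$. Passing to a weak-$*$ subsequential limit as in Section~\ref{SEC:LWP2} yields a solution $I_M w \in X_T^{0,\frac{1}{2}+\eps} \cap C([-T,T];L^2(\T))$; uniqueness follows from the difference-of-solutions argument given in the uniqueness portion of Section~\ref{SEC:LWP2}. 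For the last claim, given $\|I_M w(t_0)\|_{L^2} \leq \eta$, a standard bootstrap using the trilinear and resonant estimates provides the bound $\|I_M w\|_{X_J^{0,\frac{1}{2}+\eps}} \leq 2\eta$ on $J = [t_0-1, t_0+1]$, provided $\eta$ is chosen sufficiently small independently of $M$ and $N$.

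The main obstacle is verifying the multiplier bound uniformly in $M$ under the constraint $M \les N^2$. Since the interpolation region $N \leq |\xi| \leq 2N$ breaks strict monotonicity of $\xi \mapsto m_M(\xi)(M+|\xi|)^{-s}$, the case analysis must handle transitions between the three frequency regimes ($|\xi|<N$, $N \leq |\xi| \leq 2N$, $|\xi|>2N$) with care; the hypothesis $M \les N^2$ is exactly what keeps losses near this transition bounded by an absolute constant, so that $\eta$ may be chosen independently of $M$ and $N$.
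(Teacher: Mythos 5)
Your overall strategy matches the paper's proof: run the compactness argument from Subsection~\ref{SEC:LWP2} once you have two $M$-uniform multilinear estimates for $I_M\NN_2$ and $I_M\RR_2$, and obtain those by a pointwise multiplier comparison against the estimates in Section~\ref{SEC:WPT}. However, the multiplier bound you state is not the one that enables this reduction, and this is a genuine gap rather than a cosmetic slip.

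You propose
\begin{equation*}
\frac{m_M(n)\,(M+|n|)^{-s}}{\prod_{k=1}^3 m_M(n_k)\,(M+|n_k|)^{-s}} \les 1
\end{equation*}
and you justify it via $m_M(\xi)(M+|\xi|)^{-s}\ges 1$. That inequality is indeed true, but note it holds \emph{unconditionally}, with no use of $M\les N^2$: for $|\xi|<N$ it reads $(M+|\xi|)^{-s}\geq 1$, and for $|\xi|>2N$ the $m_M$ and $(M+|\xi|)^{-s}$ factors cancel to leave $N^{-s}\geq 1$. So your proposed bound cannot be the place where the hypothesis $M\les N^2$ matters, contrary to what you suggest about the transition region. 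More importantly, the weight $(M+|n|)^{-s}$ does not match the weight $\jb{n}^{-s}$ that appears in Proposition~\ref{Prop:triR}, Proposition~\ref{Prop:quinc}, and Corollary~\ref{Cor:qtc}. Proving $\frac{m_M(n)}{\prod m_M(n_k)}\les\frac{(M+|n|)^s}{\prod(M+|n_k|)^s}$ only reduces the $I_M$-estimates to hypothetical $H^s_M$-weighted versions of the trilinear and quintic estimates, which have not been established with constants uniform in $M$, and in fact the two weights are \emph{not} comparable uniformly in $M$ (the ratio $\psi(\xi)=(M+|\xi|)^s\jb{\xi}^{-s}$ ranges over $[M^s,1]$, so low-frequency factors can incur losses of size $M^{-s}$ each). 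Concretely: there is no estimate in Section~\ref{SEC:WPT} to "inherit" from once you have your version of the bound.

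The correct comparison weight is $\jb{\xi}^{-s}$, i.e.\ one needs
\begin{equation*}
\frac{m_M(n)\,\jb{n}^{-s}}{\prod_{k=1}^3 m_M(n_k)\,\jb{n_k}^{-s}} \les 1,
\end{equation*}
which reduces the $I_M$-trilinear estimate directly to Proposition~\ref{Prop:triR}. The key input is then $m_M(\xi)\jb{\xi}^{-s}\ges 1$, equivalently $\jb{\xi}^{s}\les m_M(\xi)$, and \emph{this} is exactly where the hypothesis $M\les N^2$ is used: on $|\xi|>2N$ one needs $\big(\tfrac{M+|\xi|}{N}\big)^s\ges\jb{\xi}^s$, i.e.\ $M+|\xi|\les N|\xi|$, which given $|\xi|\ges N$ follows precisely from $M\les N^2$. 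The same inequality $m_M(m)\jb{m}^{-s}\ges 1$ is what controls $I_M\RR_2$ via Proposition~\ref{Prop:quinc}, since it converts $\|w\|_{X^{s,\frac12+\eps}}\les\|I_M w\|_{X^{0,\frac12+\eps}}$. Once the weight is corrected, the rest of your argument (WLOG the largest $n_k$ pairs with $n$, absorb the other two factors, feed into Section~\ref{SEC:WPT}, then compactness and a continuity argument as in Subsection~\ref{SEC:LWP2}) goes through as you describe.
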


\begin{proof}
The proof of local well-posedness of \eqref{Isys2} follows essentially from the compactness argument in Subsection \ref{SEC:LWP2}, once we show the following two estimates:
\begin{align}
\big\| I_M \big( \NN_2 (w_1, w_2, w_3) \big) \big\|_{X_T^{0, - \frac 12 + 2\eps}} \les \prod_{j = 1}^3 \| I_M w_j \|_{X_T^{0, \frac 12 + \eps}},
\label{IN2}
\end{align}
\begin{align}
\| I_M \RR_2 (w) \|_{L_T^2 L_x^2} \les \Big( \| I_M u_0 \|_{H^s}^4 + \| I_M w \|_{X_T^{0, \frac 12 + \eps}}^4 + \| I_M w \|_{X_T^{0, \frac 12 + \eps}}^6 \Big) \| I_M w \|_{X_T^{0, \frac 12 + \eps}}.
\label{IR2}
\end{align}

The non-resonant estimate \eqref{IN2} follows from Proposition \ref{PROP:triR}, as long as the following estimate holds
\begin{align}
\frac{m_M (n_4) \jb{n_4}^{-s}}{m_M (n_1) \jb{n_1}^{-s} m_M (n_2) \jb{n_2}^{-s} m_M (n_3) \jb{n_3}^{-s}} \les 1
\label{IN2-1}
\end{align}

\noi
under the condition $n_1 - n_2 + n_3 = n_4$. Since the largest two frequencies are comparable, we can assume without loss of generality that $|n_1| \ges |n_4|$, so that $m_M (n_4) \jb{n_4}^{-s} \les m_M (n_1) \jb{n_1}^{-s}$. Also, since $M \les N^2$, we have $m_M (n_2) \jb{n_2}^{-s} \ges 1$ and $m_M (n_3) \jb{n_3}^{-s} \ges 1$. Thus, \eqref{IN2-1} holds.

For \eqref{IR2}, we note that
\begin{align*}
\| I_M \RR_2 (w) \|_{L_T^2 L_x^2} \leq \Big( \sup_{\substack{|t| \leq T \\ n \in \Z}} \big| |\ft w (t, n)|^2 - |\ft{u_0} (n)|^2 \big| \Big) \| I_M w \|_{L_T^2 L_x^2}.
\end{align*}

\noi
The supremum can then be estimated using \eqref{EnN} and Proposition \ref{Prop:quinc}, given that $m_M (m) \jb{m}^{-s} \ges 1$ for any $m \in \Z$. This proves \eqref{IR2}, and so local well-posedness of \eqref{Isys2}.
\end{proof}

We now proceed in a similar way as in Subsection \ref{SEC:Imd}. Recall the notations in Subsection \ref{SUBSEC:notations}. Let $w$ be a smooth solution of the cubic fNLS \eqref{fnls3}. We define the modified mass by
\begin{align*}
M (I_M w (t)) = \| I_M w (t) \|_{L_x^2}^2 = \sum_{n \in \Z} m_M (n)^2 |\ft{w} (t, n)|^2 .
\end{align*}

\noi
By differentiating $M (I_M w)$ in time, using \eqref{fNLS6}, and symmetrizing the frequencies, we obtain
\begin{align}
\frac{d}{dt} M(I_M w (t)) &= \Ld_4 \big(  e^{it \Psi (n_1, \dots, n_4)}  M_{4, M} (n_1, \dots, n_4) \ind_{\{ n_1 \neq n_2, n_2 \neq n_3 \}}; w(t) \big),
\label{dMM}
\end{align}

\noi
where
\begin{align*}
\Psi (n_1, \dots, n_4) &= |\ft{u_0} (n_1)|^2 - |\ft{u_0} (n_2)|^2 + |\ft{u_0} (n_3)|^2 - |\ft{u_0} (n_4)|^2, \\
M_{4, M} (n_1, \dots, n_4) &= \frac{i}{2} \big( m_M (n_1)^2 - m_M (n_2)^2 + m_M (n_3)^2 - m_M (n_4)^2 \big).
\end{align*}

We define the multiplier
\begin{align}
\s_{4, M} (n_1, \dots, n_4) := -\frac{i M_{4, M} (n_1, \dots, n_4) \ind_{\{ n_1 \neq n_2, n_2 \neq n_3 \}}}{|n_1|^\al - |n_2|^\al + |n_3|^\al - |n_4|^\al}.
\label{sigma4M}
\end{align}

\noi
We now define the following new modified mass with a correction term:
\begin{align}
M^4 (I_M w (t)) := M(I_M w (t)) + \Ld_4 (e^{it \Psi} \s_{4, M} ; w(t)).
\label{M4M}
\end{align}

\noi
By using \eqref{fNLS6} and the symmetry of frequencies, we can compute that
\begin{align*}
\frac{d}{dt} \Ld_4 (\s_4; w(t)) &= - \Ld_4 \big(  e^{it \Psi (n_1, \dots, n_4)}  M_{4, M} (n_1, \dots, n_4) \ind_{\{ n_1 \neq n_2, n_2 \neq n_3 \}}; w(t) \big) \\
&\quad + 4 \Re i \Ld_6 \big( e^{it \Psi (n_1, n_2, n_3, n_4 - n_5 + n_6)} e^{it \Psi (n_4, n_5, n_6, n_4 - n_5 + n_6)} \\
&\qquad \times \s_{4, M} (n_1, n_2, n_3, n_4 - n_5 + n_6); w (t) \big) \\
&\quad + 4 \Re i \sum_{\substack{n_1 - n_2 + n_3 - n_4 = 0 \\ n_1 \neq n_2, n_2 \neq n_3}} e^{it \Psi (n_1, n_2, n_3, n_4)} \s_{4, M} (n_1, n_2, n_3, n_4) \\
&\qquad \times |\ft{w} (t, n_1)|^2 \ft{w} (t, n_1) \cj{\ft{w}} (t, n_2) \ft{w} (t, n_3) \cj{\ft{w}} (t, n_4)
\end{align*}

\noi
Thus, by differentiating \eqref{M4M} and using \eqref{dMM}, we obtain
\begin{align}
\begin{split}
\frac{d}{dt} M^4 (I_M u (t)) &= 4 \Re i \Ld_6 \big( e^{it \Psi (n_1, n_2, n_3, n_4 - n_5 + n_6)} e^{it \Psi (n_4, n_5, n_6, n_4 - n_5 + n_6)} \\
&\qquad \times \s_{4, M} (n_1, n_2, n_3, n_4 - n_5 + n_6); w (t) \big) \\
&\quad + 4 \Re i \sum_{\substack{n_1 - n_2 + n_3 - n_4 = 0 \\ n_1 \neq n_2, n_2 \neq n_3}} e^{it \Psi (n_1, n_2, n_3, n_4)} \s_{4, M} (n_1, n_2, n_3, n_4) \\
&\qquad \times |\ft{w} (t, n_1)|^2 \ft{w} (t, n_1) \cj{\ft{w}} (t, n_2) \ft{w} (t, n_3) \cj{\ft{w}} (t, n_4) \\
&=: \1 + \II.
\end{split}
\label{dM4M}
\end{align}

Our next step is to establish the following estimate.
\begin{lemma}
\label{LEM:Mdiff2}
Let $\al > 2$, $\frac{2 - \al}{6} \leq s < 0$, $1 \leq M \les N^2$, and $w$ be a smooth solution of the cubic fNLS \eqref{fnls3} on an interval $J \subset \R$. Then, for any $t \in J$, we have
\begin{align}
\big| M^4 (I_M w (t)) - M(I_M w (t)) \big| \les \| I_M w (t) \|_{L^2 (\T)}^4
\label{Mdiff2_goal}
\end{align}
\end{lemma}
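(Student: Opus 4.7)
In view of the definition \eqref{M4M}, the task reduces to proving
\[
\big|\Lambda_4\big(e^{it\Psi}\sigma_{4,M}; w(t)\big)\big| \lesssim \| I_M w(t) \|_{L^2(\T)}^4.
\]
Since $|e^{it\Psi}| = 1$, the oscillatory factor can be absorbed into the absolute value and plays no role in the estimate. Setting $v(t) = I_M w(t)$ and
\[
\sigma_{4,M}'(n_1,\ldots,n_4) := \frac{\sigma_{4,M}(n_1,\ldots,n_4)}{\prod_{j=1}^4 m_M(n_j)},
\]
the goal becomes $|\Lambda_4(\sigma_{4,M}'; v(t))| \lesssim \|v(t)\|_{L^2}^4$. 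This mirrors the structure of the proof of Lemma \ref{LEM:Mdiff} on the real line.

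The first main step is to establish a pointwise bound on $\sigma_{4,M}$ analogous to Lemma \ref{LEM:si4R}. For frequencies with $|n_j| \sim N_j$ ordered as $N^{(1)}\geq N^{(2)}\geq N^{(3)}\geq N^{(4)}$, I will apply Lemma \ref{LEM:factor} (with $\xi_j$ taken as $\pm n_j$ so that $\xi_1+\xi_2+\xi_3+\xi_4=0$) to obtain the denominator bound $\gtrsim |n_1-n_2||n_2-n_3||N^{(1)}|^{\alpha-2}$, while the numerator $|m_M(n_1)^2 - m_M(n_2)^2 + m_M(n_3)^2 - m_M(n_4)^2|$ is controlled via the mean value theorem applied to $m_M^2$ (using $|\xi|\,|(m_M^2)'(\xi)| \lesssim m_M(\xi)^2$ for $|\xi|>2N$, together with the monotonicity of $m_M$). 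This will yield
\[
|\sigma_{4,M}(n_1,\ldots,n_4)| \lesssim \frac{m_M(N^{(4)})^2}{(N+N^{(3)})(N+N^{(1)})^{\alpha-1}},
\]
uniformly in $M$ in the allowed range $1\leq M \lesssim N^2$.

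The second step is to absorb the $m_M$ factors into the $L^2$ norm of $v$. Using $m_M(n)\,(M+|n|)^{-s}\gtrsim N^{-s}$ (valid for all $n\in\Z$ from the definition \eqref{iop2}) and $m_M(N^{(4)})\leq 1$, I dominate $\sigma_{4,M}'$ by a product of $(N+N^{(j)})^{-s}$ factors, up to the essential factor $[(N+N^{(3)})(N+N^{(1)})^{\alpha-1}]^{-1}$. I then dyadically decompose $v$ into $v_{N_j} := \pi_{N_j} v$, estimate $\Lambda_4$ by placing the two highest-frequency pieces in $L^2$ (Plancherel) and the two lowest-frequency pieces in $\ell^1_\xi \simeq L^\infty_x$ via Hausdorff--Young, and use the elementary bound $\|\widehat{v_{N_j}}\|_{\ell^1} \lesssim (N_j)^{1/2}\|v_{N_j}\|_{L^2}$. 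This leads to a dyadic sum of the form
\[
\sum_{N^{(1)}\geq \cdots \geq N^{(4)}} \frac{1}{(N^{(1)})^{\alpha-1+3s}} \prod_{j=1}^4 \|v_{N^{(j)}}\|_{L^2},
\]
which sums to $\lesssim \|v\|_{L^2}^4$ provided $\alpha-1+3s>0$.

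The last step is to verify the exponent condition: for $s\geq (2-\alpha)/6$ and $\alpha>2$, one has $\alpha-1+3s \geq \alpha-1 + (2-\alpha)/2 = \alpha/2 > 0$, so the sum indeed converges. The main obstacle I anticipate is the careful proof of the $\sigma_{4,M}$ bound, specifically tracking how the mean-value difference of $m_M^2$ interacts with the shifted modulation function arising from the fact that $m_M$ depends on $M+|n|$ rather than $|n|$; once this bound is in place, the remainder of the argument is a routine adaptation of the real-line computation in Lemma \ref{LEM:Mdiff}.
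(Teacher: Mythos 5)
Your plan reproduces the paper's proof essentially verbatim: establish the analogue of Lemma \ref{LEM:si4R} for $\sigma_{4,M}$ (the bound \eqref{s4M_bdd}), observe that $|e^{it\Psi}|=1$ so the phase is harmless, and rerun the dyadic argument of Lemma \ref{LEM:Mdiff}, checking that $\alpha-1+3s>0$ still holds under the weaker hypothesis $s\ge \tfrac{2-\alpha}{6}$. One correction: your intermediate inequality $m_M(n)(M+|n|)^{-s}\gtrsim N^{-s}$ fails when $M+|n|\ll N$ (there $m_M(n)=1$ while $(M+|n|)^{-s}\ll N^{-s}$ since $-s>0$); the bound you actually need, and the one the paper uses, is $\jb{n}^{s}\les m_M(n)\le 1$, which is where the hypothesis $M\les N^2$ enters (for $|n|>2N$ one has $M+|n|\les N|n|$, hence $m_M(n)=((M+|n|)/N)^{s}\ges |n|^{s}$). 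With that replacement, your domination of $\sigma_{4,M}'$ and the ensuing dyadic summation coincide exactly with the paper's computation.
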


\begin{proof}
Recall that $\s_{4, M}$ is as defined in \eqref{sigma4M}. By using similar steps as in Lemma \ref{LEM:si4R} (more precisely, \cite[Lemma 4.1]{kihoon}), we know that if $|n_j| \sim N_j$ ($j = 1, 2, 3, 4$) for $N_j \geq 1$ dyadic numbers and $N^{(1)}, N^{(2)}, N^{(3)}, N^{(4)}$ is a rearrangement of $N_1, N_2, N_3, N_4$ such that $N^{(1)} \geq N^{(2)} \geq N^{(3)} \geq N^{(4)}$, we have
\begin{align}
|\s_{4, M} (n_1, \dots, n_4)| \les \frac{m_M (N^{(4)})^2}{(N + N^{(3)}) (N + N^{(1)})^{\al - 1}}
\label{s4M_bdd}
\end{align}

\noi
under $n_1 - n_2 + n_3 - n_4 = 0$, $n_1 \neq n_2$, and $n_2 \neq n_3$. Thanks to \eqref{s4M_bdd}, the estimate \eqref{Mdiff2_goal} then follows from almost the same steps as in Lemma \ref{LEM:Mdiff}, given that we have $\jb{n}^s \les m_M (n) \leq 1$ for all $n \in \Z$ since $M \les N^2$.
\end{proof}

We now show the following almost conservation law for the modified mass $M^4 (I_M w)$. 

\begin{proposition}
\label{PROP:aclaw2}
Let $\al > 2$, $\frac{2-\al}{6} \leq s < 0$, $\eps > 0$, $1 \leq M \les N^2$, and $w$ be a smooth solution of the cubic fNLS \eqref{fnls3} on an interval $J \subset \R$. Then, for any $t, t_0 \in J$, we have
\begin{align*}
\big| M^4 (I_M w (t)) - M^4 (I_M w (t_0))  \big| \les N^{\max\{ -\frac 32 \al + 2 - 6s, -\al - 6 s \} + \dl} \| I_M w \|_{X_J^{0, \frac 12 + \eps}}^6
\end{align*}

\noi
for $\dl > 0$ arbitrarily small.
\end{proposition}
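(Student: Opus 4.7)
The plan is to integrate the identity \eqref{dM4M} from $t_0$ to $t$ and bound each of the two six-linear forms $\1$ and $\II$ separately. For both terms the key quantitative input is the pointwise bound
\[
|\s_{4,M}(n_1,\ldots,n_4)| \les \frac{m_M(N^{(4)})^2}{(N+N^{(3)})(N+N^{(1)})^{\al-1}}
\]
on the hyperplane $n_1-n_2+n_3-n_4=0$ with $n_1\neq n_2$ and $n_2\neq n_3$, which is exactly the input established in the proof of Lemma \ref{LEM:Mdiff2}. After passing to $v = I_M w$ via the pointwise inequality $\jb{n}^s \les m_M(n) \leq 1$ (valid since $M \les N^2$), everything reduces to $L^2$-based multilinear estimates for $v$ with a multiplier of the same shape as in Proposition \ref{PROP:aclaw} but on $\T$.

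For the term $\1$ I would perform a dyadic decomposition $|n_j|\sim N_j$ ($j=1,\dots,6$), together with $|n_4-n_5+n_6|\sim N_{456}$, and introduce the rearrangement $N^{(1)}\geq N^{(2)}\geq N^{(3)}\geq N^{(4)}$ of the four ``outer'' frequencies $N_1,N_2,N_3,N_{456}$. Since two largest frequencies must be comparable and $\s_{4,M}$ vanishes when all four outer frequencies are $\ll N$, in every non-trivial configuration we have $N^{(1)}\ges N$. Mimicking the four-case split of Proposition \ref{PROP:aclaw} (according to how many of $N_4,N_5,N_6$ exceed $N$), in each case I would pair frequencies to convert the sum on $\G_6$ into a product of $L^4_{t,x}$ and $L^\infty_t L^2_x$ norms, or alternatively an $L^4_{t,x}$-$L^4_{t,x}$-$L^4_{t,x}$-$L^6_{t,x}$-$L^6_{t,x}$-$L^6_{t,x}$ split when three factors live at high frequency. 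These are then controlled by Lemma \ref{LEM:L4str} and Lemma \ref{LEM:L6str} combined with the embedding $X_J^{0,\frac12+\eps}\hookrightarrow L^\infty_J L^2_x$. The loss of the bilinear Strichartz estimate (which was crucial on $\R$) is compensated by the $L^6$ decoupling estimate, and a careful accounting of the exponents produces the factor $N^{-\frac{3\al}{2}+2-6s+\dl}$ in the ``balanced'' cases and $N^{-\al-6s+\dl}$ in the ``high-low'' cases, whence the maximum in the statement.

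For the resonant term $\II$, I would first note that the constraint $n_1-n_2+n_3-n_4=0$ together with $n_1\neq n_2$ and $n_2\neq n_3$ means that $\s_{4,M}$ is finite and obeys the same upper bound as above; however the sum now also carries an extra factor of $|\ft w(t,n_1)|^2$. Writing $|\ft w(t,n_1)|^2 \leq m_M(n_1)^{-2}|\widehat{I_M w}(t,n_1)|^2 \les \jb{n_1}^{-2s}|\widehat{I_M w}(t,n_1)|^2$ absorbs this extra factor into the multiplier budget, and after integrating in time and applying Cauchy--Schwarz plus the $L^4_{t,x}$ and $L^6_{t,x}$ Strichartz estimates from Lemma \ref{LEM:L4str} and Lemma \ref{LEM:L6str}, the resulting six-linear estimate gives a bound of the same form $N^{\max\{-\frac{3\al}{2}+2-6s,\,-\al-6s\}+\dl}\|I_M w\|_{X_J^{0,\frac12+\eps}}^6$; in fact this term is structurally easier because one of the six factors is already localised at a single frequency $n_1$, giving extra flexibility in exponent accounting.

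The main obstacle will be the second case of the $\1$ analysis, namely when all of $N_4,N_5,N_6\ges N$ and $N_1\ll N_4$ (the analogue of Subcase 4.2 in Proposition \ref{PROP:aclaw}): without the bilinear Strichartz estimate on $\T$ one has to use three $L^6_{t,x}$ factors, and the resulting exponent $-\al-6s$ is strictly worse than $-2\al+2$ from the real line, which is precisely the source of the first term inside the $\max$. Verifying that $-\frac{3\al}{2}+2-6s+\dl<0$ and $-\al-6s+\dl<0$ (which gives a genuine almost conservation law) requires $\al>2$ and $s\geq \frac{2-\al}{6}$, matching exactly the hypotheses; this compatibility check is what drives the endpoint regularity $s=\frac{2-\al}{6}$ in Theorem \ref{TM:GWP2}.
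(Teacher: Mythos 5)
Your overall architecture matches the paper's: integrate \eqref{dM4M}, treat the six-linear term $\1$ and the resonant term $\II$ separately, reduce to $v = I_M w$ via the multiplier bound \eqref{s4M_bdd} and $\jb{n}^s \les m_M(n) \leq 1$, and close with the $L^4$ and $L^6$ Strichartz estimates. Your treatment of $\II$ is fine in outline (the paper pins the three $n_1$-factors in $L^1_\tau \l_n^3$ and uses $L^4$ on the remaining pair, getting $N^{-\al-6s+\dl}$), and your Case ``$N_1 \ges N_4$'' for $\1$ (six $L^6$'s) is also what the paper does.

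However, there is a genuine gap in the hardest configuration $N_1 \ll N_4 \sim N_5 \sim N_{\text{max}}$, which you propose to handle ``with three $L^6_{t,x}$ factors.'' This cannot work as stated: the bound \eqref{s4M_bdd} on $\s_{4,M}$ decays only in the \emph{outer} frequencies $N^{(1)} \sim \max\{N_1,N_2,N_3,N_{456}\} \sim N_1$, whereas the weights $m_M(N_4)^{-1} m_M(N_5)^{-1} \les N^{2s} N_4^{-2s}$ grow in $N_4$, and the $L^6$ estimate gives no decay whatsoever in $N_4$. Since $N_4$ can be arbitrarily large relative to $N^{(1)}$, the dyadic sum over $N_4$ diverges — there is nothing to compensate the $N_4^{-2s}$ growth. (This is exactly the role the bilinear Strichartz estimate played on $\R$, and $L^6$ decoupling is not a substitute for it here.) The paper closes this case by a different mechanism: writing the total resonance function as the sum of the inner contribution $|n_4|^\al - |n_5|^\al + |n_6|^\al - |n_{456}|^\al$, which is $\ges N_4^{\al-1}$ by Lemma \ref{LEM:factor}, and the outer contribution, which is $\les N_1^\al$. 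When $N_1^\al \ll N_4^{\al-1}$ the maximum modulation is forced to be $\ges N_4^{\al-1}$, and trading the $b=\tfrac12$ weight for $N_4^{-\frac{\al-1}{2}+}$ both restores summability and produces the exponent $-\tfrac32\al+2-6s$ (the first entry of the $\max$, which you misattribute to the ``balanced'' cases). When $N_1^\al \ges N_4^{\al-1}$, the relation $N^{(1)} \sim N_1 \ges N_4^{\frac{\al-1}{\al}}$ transfers the $N_4^{-2s}$ growth onto $(N+N^{(1)})^{-\frac{3s\al}{\al-1}}$, which is absorbed since $\al - 1 + \frac{3s\al}{\al-1} + 3s > 0$. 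Without one of these two devices your argument does not close, so you should add the modulation-splitting step explicitly.
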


\begin{proof}
By \eqref{dM4M}, we have
\begin{align*}
\big| M^4 (I_M w (t)) - M^4 (I_M w (t_0))  \big| &= \bigg| \int_{t_0}^t \frac{d}{dt} M^4 (I_M w (t')) dt' \bigg| \\
&\leq 4 \bigg| \int_{t_0}^t \1 dt' \bigg| + 4 \bigg| \int_{t_0}^t \II dt' \bigg|.
\end{align*}
For simplicity below, for a dyadic number $K \geq 1$, we denote $w_K = \pi_K w$.


For the $\II$ term, we use a dyadic decomposition so that $|n_j| \sim N_j$ for some dyadic numbers $N_j \geq 1$ for $j = 1, 2, 3, 4$. We denote by $N^{(1)}, N^{(2)}, N^{(3)}, N^{(4)}$ a rearrangement of $N_1, N_2, N_3, N_{4}$ such that $N^{(1)} \geq N^{(2)} \geq N^{(3)} \geq N^{(4)}$. By \eqref{s4M_bdd}, the fact that $m(N_j)^{-1} \les N_j^{-s}$, Plancherel's theorem, Young's convolution inequalities, H\"older's inequalities, Lemma \ref{LEM:emb}, and the $L^4$-Strichartz estimate (Lemma \ref{LEM:L4str}), we obtain
\begin{align*}
\bigg| \int_{t_0}^t \II dt' \bigg| &\les \sum_{\substack{N_1, \dots, N_4 \geq 1 \\ \text{dyadic}}} \frac{1}{(N + N^{(3)}) (N + N^{(1)})^{\al - 1} m_M (N_1)^3 \prod_{j = 2}^4 m_M (N_j)} \\
&\quad \times \big\| \ft{I_M w_{N_1}} \big\|_{L_\tau^1 \l_n^3}^3 \big\| \ft{I_M w_{N_2}} * \ft{I_M w_{N_3}} \big\|_{L_\tau^2 \l_n^2} \big\| \ft{I_M w_{N_4} \ind_{[t_0, t]}} \big\|_{L_\tau^2 \l_n^2} \\
&\les \sum_{\substack{N_1, \dots, N_4 \geq 1 \\ \text{dyadic}}} \frac{(N^{(1)})^{-6s}}{(N + N^{(3)}) (N + N^{(1)})^{\al - 1}} \| I_M w_{N_1} \|_{X_J^{0, \frac 12 + \eps}}^3 \\
&\quad \times \| I_M w_{N_2} \|_{L_J^4 L_x^4} \| I_M w_{N_3} \|_{L_J^4 L_x^4} \| I_M w_{N_4} \|_{X^{0, \frac 12 + \eps}} \\
&\les N^{-\al - 6s + \dl} \| I_M w \|_{X_J^{0, \frac 12 + \eps}}^6,
\end{align*}

\noi
where we used $\al - 1 + 6s > 0$ given $\frac{2 - \al}{6} \leq s < 0$. This is the desired bound.

For the $\1$ term, we use a dyadic decomposition so that $|n_j| \sim N_j$ for some dyadic numbers $N_j \geq 1$ for $j = 1, \dots, 6$ and also $|n_4 - n_5 + n_6| \sim N_{456}$. We denote by $N^{(1)}, N^{(2)}, N^{(3)}, N^{(4)}$ a rearrangement of $N_1, N_2, N_3, N_{456}$ such that $N^{(1)} \geq N^{(2)} \geq N^{(3)} \geq N^{(4)}$. We also define $N_{\text{max}} = \max \{N_1, \dots, N_6\}$. We can assume that $N_{\text{max}} \ges N$, since otherwise we have $\s_{4, M} \equiv 0$ and so the estimate follows directly. By symmetry, we can assume that $N_1 \geq N_2 \geq N_3$ and $N_4 \geq N_5 \geq N_6$. We consider the following two cases. 

\medskip \noi
\textbf{Case 1:} $N_1 \ges N_4$. 

In this case, we have $N_1 \sim N^{(1)} \sim N_{\text{max}}$. Thus, by 
\eqref{s4M_bdd}, the fact that $m_M (N_j)^{-1} \les N^s(M+N_{\text{max}})^{-s}$, Plancherel's theorem, H\"older's inequality, the $L^6$-Strichartz estimate (Lemma~\ref{LEM:L6str}), and Lemma \ref{LEM:emb}, we obtain
\begin{align*}
\bigg| \int_{t_0}^t \1 dt' \bigg| &\les \sum_{\substack{N_1, \dots, N_6, N_{456} \geq 1 \\ \text{dyadic}}} \frac{1}{(N + N^{(3)}) (N + N^{(1)})^{\al - 1} \prod_{j = 1}^6 m_M (N_j)} \\
&\quad \times \int_{t_0}^t \int_{\T} \prod_{j = 1}^5 \Big| \F_{t, x}^{-1} \big( \big| \ft{I_M w_{N_j}} \big| \big) \Big| \cdot \Big| \F_{t, x}^{-1} \big( \big| \ft{I_M w_{N_6} \ind_{[t_0, t]}} \big| \big) \Big| dx dt' \\
&\les \sum_{\substack{N_1, \dots, N_6, N_{456} \geq 1 \\ \text{dyadic}}} \frac{N^{6s} ( M^{-6s} + N_{\text{max}}^{-6s} )}{(N + N^{(3)}) (N + N^{(1)})^{\al - 1}}  \\
&\quad \times \prod_{j = 1}^5 \Big\| \F_{t, x}^{-1} \big( \big| \ft{I_M w_{N_j}} \big| \big) \Big\|_{L_J^6 L_x^6} \cdot \Big\| \F_{t, x}^{-1} \big( \big| \ft{I_M w_{N_6} \ind_{[t_0, t]}} \big| \big) \Big\|_{L_J^6 L_x^6} \\
&\les \sum_{\substack{N_1, \dots, N_6, N_{456} \geq 1 \\ \text{dyadic}}} \frac{N^{6s - 12s} + N^{6s} N_{\text{max}}^{-6s}}{(N + N^{(3)}) (N + N^{(1)})^{\al - 1}} \prod_{j = 1}^6 \| I_M w_{N_j} \|_{X_J^{0, \frac 12 + \eps}} \\
&\les N^{-\al - 6s + \dl} \| I_M w \|_{X_J^{0, \frac 12 + \eps}}^6,
\end{align*}

\noi
where we used $N_3 \les N^{(3)}$, $N_6 \les N^{(1)}$, $M \les N^2$, and $\al - \frac 32 + 6s > 0$ given $\frac{2 - \al}{6} \leq s < 0$ and $\al > 2$. This is the desired bound.

\medskip \noi
\textbf{Case 2:} $N_1 \ll N_4$. 

In this case, since the two largest frequencies are comparable, we have $N_4 \sim N_5 \sim N_{\text{max}} \ges N^{(1)}$, and so we need to rely on the smoothing from the maximum modulation as in Case 2 of Proposition \ref{Prop:quinc}. For convenience, we denote $n_{456} = n_4 - n_5 + n_6$. Let $\nm = \max\{ n_1, n_2, n_3, n_{456} \}$ and $\nm' = \max\{ n_4, n_5, n_6, n_{456} \}$. Note that we have 
\begin{align}
\big| |n_1|^\al - |n_2|^\al + |n_3|^\al - |n_{456}|^\al \big| \les |\nm|^\al \sim N_1^\al.
\label{mod1}
\end{align}

\noi
By Lemma \ref{LEM:factor}, we have
\begin{align}
\big| |n_4|^\al - |n_5|^\al + |n_6|^\al - |n_{456}|^\al \big| \ges |n_4 - n_5| |n_4 - n_{456}| |\nm'|^{\al - 2} \ges N_4^{\al - 1}.
\label{mod2}
\end{align}

\medskip \noi
\textbf{Subcase 2.1:} $N_1^\al \ll N_4^{\al - 1}$.

By \eqref{mod1} and \eqref{mod2}, we can assume without loss of generality that (the other cases are similar and easier)
\begin{align*}
|\tau_6 + |n_6|^\al| = \max_{j = 1, \dots, 6} \{ |\tau_j + |n_j|^\al| \} \ges N_4^{\al - 1}.
\end{align*}

\noi
Thus, by \eqref{s4M_bdd}, Plancherel's theorem, Young's convolution inequalities, H\"older's inequalities, the $L^4$-Strichartz estimate (Lemma \ref{LEM:L4str}), and Lemma \ref{LEM:time}, we obtain
\begin{align*}
\bigg| \int_{t_0}^t \1 dt' \bigg| &\les \sum_{\substack{N_1, \dots, N_6, N_{456} \geq 1 \\ \text{dyadic}}} \frac{N^{6s} (M^{-3s} + N_1^{-3s}) (M^{-3s} + N_4^{-3s})}{(N + N^{(3)}) (N + N^{(1)})^{\al - 1}} \big\| \ft{I_M w_{N_4}} * \ft{I_M w_{N_5}} \big\|_{L_\tau^2 \l_n^2} \\
&\quad \times \big\| \ft{I_M w_{N_1}} \big\|_{L_\tau^{1} \l_n^1} \big\| \ft{I_M w_{N_2}} \big\|_{L_\tau^{1} \l_n^1} \big\| \ft{I_M w_{N_3}} \big\|_{L_\tau^1 \l_n^1} \big\| \ft{I_M w_{N_6} \ind_{[t_0, t]}} \big\|_{L_\tau^{2} \l_n^2} \\
&\les \sum_{\substack{N_1, \dots, N_6, N_{456} \geq 1 \\ \text{dyadic}}} \frac{N^{6s} (M^{-3s} + N_1^{-3s}) (M^{-3s} + N_4^{-3s}) N_1^{\frac 12} N_2^{\frac 12} N_3^{\frac 12}}{(N + N^{(3)}) (N + N^{(1)})^{\al - 1} N_4^{\frac{\al - 1}{2} -}}  \| I_M w_{N_4} \|_{L_J^4 L_x^4}  \\
&\quad \times \| I_M w_{N_5} \|_{L_J^4 L_x^4} \| I_M w_{N_1} \|_{X_J^{0, \frac 12 + \eps}} \| I_M w_{N_2} \|_{X_J^{0, \frac 12 + \eps}} \| I_M w_{N_3} \|_{X_J^{0, \frac 12 + \eps}} \| I_M w_{N_6} \ind_{[t_0, t]} \|_{X^{0, \frac 12 -}} \\
&\les \sum_{\substack{N_1, \dots, N_6, N_{456} \geq 1 \\ \text{dyadic}}} \frac{N^{6s} (N^{-6s} + N_1^{-3s}) (N^{-6s} + N_4^{-3s}) }{(N + N^{(3)})^{\frac 12} (N + N^{(1)})^{\al - 2} N_4^{\frac{\al - 1}{2} -}} \prod_{j = 1}^6 \| I_M w_{N_j} \|_{X_J^{0, \frac 12 + \eps}} \\
&\les N^{-\frac 32 \al + 2 - 6s + \dl} \| I_M w \|_{X_J^{0, \frac 12 + \eps}}^6,
\end{align*}

\noi
where we used $N_1 \les N^{(1)}$, $N_2 \les N^{(1)}$, $N_3 \les N^{(3)}$, $M \les N^2$, $N_4 = N_{\text{max}} \ges N$, and $\frac{\al - 1}{2} + 3s > 0$ and $\al - 2 + 3s > 0$ given $\frac{2 - \al}{6} \leq s < 0$ and $\al > 2$. This is the desired bound.

\medskip \noi
\textbf{Subcase 2.2:} $N_1^\al \ges N_4^{\al - 1}$.

In this case, we proceed as in Case 1. Since $N_1 \ll N_4$ and $N^{(1)} \sim N_1 \ges N_4^{\frac{\al - 1}{\al}}$, the dyadic coefficient part becomes
\begin{align*}
&\sum_{\substack{N_1, \dots, N_6, N_{456} \geq 1 \\ \text{dyadic}}} \frac{1}{(N + N^{(3)}) (N + N^{(1)})^{\al - 1} \prod_{j = 1}^6 m_M (N_j)} \\
&\les \sum_{\substack{N_1, \dots, N_6, N_{456} \geq 1 \\ \text{dyadic}}} \frac{N^{6s} (M^{-3s} + N_1^{-3s}) (M^{-3s} + N_4^{-3s})}{(N + N^{(3)}) (N + N^{(1)})^{\al - 1}} \\
&\les N^{-\al + (6 - 6a)s + \dl} + N^{6s} \sum_{\substack{N_1, \dots, N_6, N_{456} \geq 1 \\ \text{dyadic}}} \frac{M^{-3s} + N_1^{-3s}}{(N + N^{(3)}) (N + N^{(1)})^{\al - 1 + \frac{3s \al}{\al - 1}}} \\
&\les N^{-\al - 6s + \dl} + N^{-\al  - \frac{3\al}{\al - 1}  s + \dl},
\end{align*}

\noi
where we used $N_1 \les N^{(1)}$, $N_4 = N_{\text{max}} \ges N$, $M \les N^2$, and $\al - 1 + \frac{3s \al}{\al - 1} + 3s > 0$ given $\frac{2 - \al}{6} \leq s < 0$ and $\al > 2$. Note that $-\frac{3\al}{\al - 1} s \leq -6s$, so that we have the desired estimate.
\end{proof}

We are now ready to prove Theorem \ref{TM:GWP2}.

\begin{proof}[Proof of Theorem \ref{TM:GWP2}]
For simplicity, we only consider well-posedness on the time interval $[0, T]$, and the argument for $[-T, 0]$ is the same.

From \eqref{I2} and the definition of the $H^s_M$-norm in \eqref{SobM}, we have
\begin{align*}
\| I_M u_0 \|_{L^2} \les N^{-2s} \| u_0 \|_{H^{2s}_M} \les N^{-2s} M^s \| u_0 \|_{H^s}.
\end{align*}

\noi
By letting $M \sim N^2$, we get
\begin{align*}
\| I_M u_0 \|_{L^2} < \eta_0 \leq \frac{\eta}{2},
\end{align*}

\noi
where $\eta > 0$ is as given in Proposition \ref{PROP:ILWP2}. 

We can now apply the same iteration scheme as in the proof of Theorem \ref{TM:GWP1} in Subsection~\ref{SEC:Imd}, using Lemma \ref{LEM:Mdiff2} and Proposition \ref{PROP:aclaw2}. This allows us to obtain that, for $k \in \N$ satisfying
\begin{align*}
k N^{\max\{ -\frac 32 \al + 2 - 6s, - \al - 6s \} + \dl} \sim 1
\end{align*}

\noi
we have
\begin{align*}
\sup_{t \in [0, k]} \| w (t) \|_{H_M^s} \les \sup_{t \in [0, k]} \| I_M w(t) \|_{L^2} \leq \eta.
\end{align*}

\noi
Thus, we have
\begin{align}
\sup_{t \in [0, k]} \| w (t) \|_{H^s} \les M^{-s} \sup_{t \in [0, k]} \| w (t) \|_{H_M^s} \les N^{-2s} \eta < \infty.
\label{sob_bdd1}
\end{align}

\noi
It remains to choose
\begin{align}
N^{\min \{ \frac 32 \al - 2 + 6s, \al + 6s \} - \dl} \sim T,
\label{sob_bdd2}
\end{align}

\noi
which can be achieve by choosing $N \gg 1$ large enough since $\frac 32 \al - 2 + 6s - \dl > 0$ and $\al + 6s - \dl > 0$ given $\frac{2 - \al}{6} \leq s < 0$, $\al > 2$, and $\dl > 0$ sufficiently small. This finishes the proof of Theorem~\ref{TM:GWP2}.
\end{proof}

\begin{remark} \rm
Note that from \eqref{sob_bdd1} and \eqref{sob_bdd2}, we obtain the following growth bound of the Sobolev norm of the solution $w$ to the cubic fNLS \eqref{fnls3}:
\begin{align*}
\sup_{t \in [0, T]} \| w (t) \|_{H^s} \les T^\be \| u_0 \|_{H^s},
\end{align*}

\noi
where
\begin{align*}
\be = \frac{-2s}{\min \{ \frac 32 \al - 2 + 6s, \al + 6s \} - \dl} =
\begin{cases}
\frac{-2s}{\frac 32 \al - 2 + 6s - \delta} & \text{if } \al \leq 4 \\
\frac{-2s}{\al + 6s - \dl} & \text{if } \al > 4.
\end{cases}
\end{align*}
\end{remark}

\begin{ackno} \rm 
The authors would like to thank Tadahiro Oh for proposing this problem and his continuing support. The authors are also grateful to Yuzhao Wang and Chenmin Sun for helpful
discussions. 
E.B. would like to thank the School of Mathematics at the University of Edinburgh for its hospitality during his visit, where part of this manuscript was prepared. E.B. was supported by Unit\'e de Math\'ematiques Pures et Appliqu\'ees UMR 5669 ENS de Lyon / CNRS. G.L., R.L., and Y.Z. were supported by the European Research Council (grant no.~864138 ``SingStochDispDyn''). G.L. was also supported by the EPSRC New Investigator Award (grant no.~EP/S033157/1). Y.Z. was also partially funded by the chair of probability and PDEs at EPFL.
\end{ackno}


\begin{thebibliography}{99}
%



\bibitem{Ag}
G.~Agrawal, 
{\it Nonlinear Fiber Optics,}
Nonlinear Science at the Dawn of the 21st Century. Berlin, Heidelberg: Springer Berlin Heidelberg, 2000. 195--211.


\bibitem{BOP1}
\'A.~B\'enyi, T.~Oh, O.~Pocovnicu,
{\it Wiener randomization on unbounded domains and an application to almost sure well-posedness of NLS},
Excursions in harmonic analysis. Vol. 4, 3--25.
Appl. Numer. Harmon. Anal. Birkh\"auser/Springer, Cham, 2015.


\bibitem{BOP2}
\'A.~B\'enyi, T.~Oh, O.~Pocovnicu,
{\it On the probabilistic Cauchy theory of the cubic nonlinear Schr\"odinger equation on $\R^3$, $d \geq 3$},
Trans. Amer. Math. Soc. Ser. B2 (2015), 1--50.





\bibitem{BO1} 
J.~Bourgain, 
{\it Fourier transform restriction phenomena for certain lattice subsets and applications to nonlinear evolution equations I. Schr\"odinger equations},
Geom. Funct. Anal. 3 (1993), no. 2, 107--156.


\bibitem{BO96}
J.~Bourgain, 
{\it Invariant measures for the 2D-defocusing nonlinear Schr\"odinger equation},
Comm. Math. Phys. 176 (1996), no. 2, 421--445.


\bibitem{BO97}
J.~Bourgain,
{\it Invariant measures for the Gross-Piatevskii equation},
J. Math. Pures Appl. 76 (1997), no. 8, 649--702.


\bibitem{BD}
J.~Bourgain, C.~Demeter,
{\it The proof of the $\l^2$ decoupling conjecture},
Ann. of Math. 182 (2015), no. 1, 351--389.


\bibitem{Bring2}
B.~Bringmann,
{\it Invariant Gibbs measures for the three-dimensional wave equation with a hartree nonlinearity II: dynamics}, to appear in J. Eur. Math. Soc.



\bibitem{BGT}
N.~Burq, P.~G\'erard, N.~Tvzetkov, 
{\it An instability property of the nonlinear Schr\"odinger equation on $S^d$,}
Math. Res. Lett. 9 (2002) 323--335.


\bibitem{BTz08}
N.~Burq, N.~Tzvetkov,
{\it Random data Cauchy theory for supercritical wave equations. I. Local theory},
Invent. Math. 173 (2008), no. 3, 449--475.




\bibitem{CFLOP}
A.~Chapouto, J.~Forlano, G.~Li, T.~Oh, D.~Pilod,
{\it Intermediate long wave equation in negative Sobolev spaces},
preprint.





\bibitem{CHKL}
Y.~Cho, G.~Hwang, S.~Kwon, S.~Lee,
{\it Well-posedness and ill-posedness for the cubic fractional Schr\"odinger equations},
Discrete Contin. Dyn. Syst. 35 (2015), no. 7, 2863--2880.


\bibitem{CP}
A.~Choffrut, O.~Pocovnicu
{\it Ill-posedness of the cubic nonlinear half-wave equation and other fractional NLS on the real line,}
Int. Math. Res. Not. IMRN (2018), no. 3, 699--738.


\bibitem{Chr}
M.~Christ,
{\it Power series solution of a nonlinear Schr\"odinger equation},
Mathematical aspects of nonlinear dispersive equations, 131--155. Ann. of Math. Stud., 163 Princeton University Press, Princeton, NJ, 2007.


\bibitem{CCT0}
M.~Christ, J.~Colliander, T.~Tao, 
{\it Ill-posedness for nonlinear Schr\"odinger and wave equations,}
arXiv:math/0311048v1 [math.AP].



\bibitem{CCT1}
M.~Christ, J.~Colliander, T.~Tao,
{\it Instability of the periodic nonlinear Schr\"odinger equation,}
arXiv:math/0311227v1 [math.AP].


\bibitem{CCT2}
M.~Christ, J.~Colliander, T.~Tao,
{\it Asymptotics, frequency modulation, and low regularity ill-posedness for canonical defocusing equations,}
Amer. J. Math. 125 (2003), no. 6, 1235--1293.


\bibitem{CHT}
M.~Christ, J.~Holmer, D.~Tataru,
{\it Low regularity a priori bounds for the modified Korteweg-de Vries equation},
Lib. Math. (N.S.) 32 (2012), no. 1, 51--75.


\bibitem{ckstt2}
J.~Colliander, M.~Keel, G.~Staffilani, H.~Takaoka, T.~Tao, 
{\it Global well-posedness for Schr\"odinger equations with derivative,}
SIAM J. Math. Anal. 33 (2001), no. 3, 649--669.


\bibitem{CKSTT0}
J.~Colliander, M.~Keel, G.~Staffilani, H.~Takaoka, T.~Tao,
{\it Almost conservation laws and global rough solutions to a nonlinear Schro\"dinger equation,}
 Math. Res. Lett. 9 (2002), no. 5-6, 659--682.


\bibitem{CKSTT1}
J.~Colliander, M.~Keel, G.~Staffilani, H.~Takaoka, T.~Tao, 
{\it Sharp global well-posedness for KdV and modified KdV on $\R$ and $\T$,} 
J. Amer. Math. Soc. 16 (2003), no. 3, 705--749.


\bibitem{CO}
J.~Colliander, T.~Oh,
{\it Almost sure well-posedness of the cubic nonlinear Schr\"odinger equation below $L^2(\T)$,}
Duke Math. J. 161 (2012), no. 3, 367--414.


\bibitem{DET}
S.~Demirbas, M.~B. Erdo\u{g}an, N.~Tzirakis,
{\it Existence and Uniqueness theory for the fractional Schr\"odinger equation on the torus,}
Some topics in harmonic analysis and applications, 145--162.
Adv. Lect. Math. (ALM), 34 International Press, Somerville, MA, 2016.


\bibitem{DNY2}
Y.~Deng, A.~Nahmod, H.~Yue,
{\it Invariant Gibbs measures and global strong solutions for nonlinear Schr\"odinger equations in dimension two},
arXiv:1910.08492 [math.AP].


\bibitem{DNY3}
Y.~Deng, A.~Nahmod, H.~Yue,
{\it Random tensors, propagation of randomness, and nonlinear dispersive equations},
Invent. Math. 228 (2022), no. 2, 539--686.


\bibitem{SPST}
D.~De Silva, N.~Pavlovi\'c, G.~Staffilani, N.~Tzirakis,
{\it Global well-posedness for a periodic nonlinear
Schr\"dinger equation in 1D and 2D,}
Discrete Contin. Dyn. Syst. 19 (2007), no. 1, 37--65.



\bibitem{ES}
A.~Elgart, B.~Schlein,
{\it Mean field dynamics of boson stars,}
Comm. Pure Appl. Math. 60 (2007), no. 4, 500--545.


\bibitem{FOSW}
C.~Fan, Y.~Ou, G.~Staffilani, H.~Wang,
{\it 2D-defocusing nonlinear Schr\"odinger equation with random data on irrational tori},
Stoch. Partial Differ. Equ. Anal. Comput. 9 (2021), no. 1, 142--206.


\bibitem{FO}
J.~Forlano, T.~Oh,
{\it Normal form approach to the one-dimensional cubic nonlinear Schr\"odinger equation in Fourier-amalgam spaces},
preprint.


\bibitem{FOW}
J.~Forlano, T.~Oh, Y.~Wang,
{\it Stochastic nonlinear Schr\"odinger equation with almost space-time white noise},
J. Aust. Math. Soc. 109 (2020), no. 1, 44--67.


\bibitem{FT}
J.~Forlano, W.~Trenberth,
{\it On the transport of Gaussian measures under the one-dimensional fractional nonlinear Schr\"odinger equations},
Ann. Inst. H. Poincar\'e C Anal. Non Lin\'eaire 36 (2019), no. 7, 1987--2025.


\bibitem{FL}
J.~Fr\"ohlich, E.~Lenzmann, 
{\it Blowup for nonlinear wave equations describing boson stars,}
Comm. Pure Appl. Math. 60 (2007), no. 11, 1691--1705.


\bibitem{Grun}
A.~Gr\"unrock,
{\it Bi- and trilinear Schr\"odinger estimates in one space dimension with applications to cubic NLS and DNLS},
Int. Math. Res. Not. (2005), no. 41, 2525--2558.


\bibitem{GH}
A.~Gr\"unrock, S.~Herr,
{\it Low regularity local well-posedness of the derivative nonlinear Schr\"odinger equation with periodic initial data},
SIAM J. Math. Anal. 39 (2008), no. 6, 1890--1920.


\bibitem{GKO}
M.~Gubinelli, H.~Koch, T.~Oh,
{\it Renormalization of the two-dimensional stochastic nonlinear wave equations},
Trans. Amer. Math. Soc. 370 (2018), no. 10, 7335--7359.



\bibitem{GO}
Z.~Guo, T.~Oh, 
{\it Non-existence of solutions for the periodic cubic nonlinear Schr\"odinger equation below $L^2$,}
Int. Math. Res. Not. IMRN (2018), no. 6, 1656--1729.








\bibitem{HK}
A.~Hasegawa, Y.~Kodama, 
{\it Signal transmission by optical solitons in monomode fiber,}
Proc. IEEE, 69 (1981), 1145--1150.






\bibitem{IP}
A.~D.~Ionescu, F.~Pusateri,
{\it Nonlinear fractional Schr\"odinger equations in one dimension,}
J. Funct. Anal. 266 (2014), no. 1, 139--176.



\bibitem{IK}
B.~A.~Ivanov, A.~M.~Kosevich, 
{\it Stable three-dimensional small-amplitude soliton in magnetic materials,}
So. J. Low Temp. Phys. 9 (1983), 439--442.








\bibitem{KPV}
C.~Kenig, G.~Ponce, L.~Vega, 
{\it A bilinear estimate with applications to the KdV equation,}
J. Amer. Math. Soc. 9 (1996), no. 2, 573--603.


\bibitem{KLV}
R.~Killip, T.~Laurens, M.~Vi\c{s}an,
{\it Sharp well-posedness for the Benjamin-Ono equations},
arXiv:2304.00124 [math.AP].



\bibitem{KLS}
K.~Kirkpatrick, E.~Lenzmann, G.~Staffilani, 
{\it On the continuum limit for discrete NLS with long-range interactions,}
Comm. Math. Phys. 317 (2013), no. 3, 563--591.


\bibitem{Ki}
N.~Kishimoto,
{\it A remark on norm inflation for nonlinear Schr\"odinger equations,}
Commun. Pure Appl. Anal. 18 (2019), no. 3, 1375--1402.


\bibitem{kwak}
C.~Kwak, 
{\it Periodic fourth-order cubic NLS: Local well-posedness and non-squeezing property}, 
J. Math. Anal. Appl. 461 (2018), no. 2, 1327--1364.


\bibitem{La}
N.~Laskin,
{\it Fractional quantum mechanics and L\'{e}vy path integrals,}
Phys. Lett. A, 268 (2000), 298--305.


\bibitem{LLW}
G.~Li, R.~Liang, Y.~Wang,
{\it Optimal divergence rate of the focusing Gibbs measure},
arXiv:2310.08783 [math.PR].


\bibitem{LW1}
R.~Liang, Y.~Wang,
{\it Gibbs measure for the focusing fractional NLS on the torus},
SIAM J. Math. Anal. 54 (2022), no. 6, 6096--6118.


\bibitem{LW2}
R.~Liang, Y.~Wang,
{\it Gibbs Dynamics for the Weakly Dispersive Nonlinear Schr\"odinger Equations},
arXiv:2306.07645 [math.AP].


\bibitem{Liu}
R.~Liu,
{\it On the probabilistic well-posedness of the two-dimensional periodic nonlinear Schr\"odinger equation with the quadratic nonlinearity $|u|^2$},
J. Math. Pures Appl. 171 (2023), 75--101.


\bibitem{MT1}
T.~Miyaji, Y.~Tsutsumi, 
{\it Existence of global solutions and global attractor for the third order Lugiato-Lefever equation on $\T$,}
Ann. Inst. H. Poincar\'e C Anal. Non Lin\'eaire 34 (2017), no. 7, 1707--1725.


\bibitem{MT}
T.~Miyaji, Y.~Tsutsumi, 
{\it Local well-posedness of the NLS equation with third order dispersion in negative Sobolev spaces,}
Differential Integral Equations 31 (2018), no. 1-2, 111--132.


\bibitem{Mo}
L.~Molinet,
{\it On ill-posedness for the one-dimensional periodic cubic Schr\"odinger equation,}
Math. Res. Lett. 16 (2009), no. 1, 111--120.




\bibitem{NTT}
K.~Nakanishi, H.~Takaoka, Y.~Tsutsumi, 
{\it Local well-posedness in low regularity of the mKdV equation with periodic boundary condition,}
Discrete Contin. Dyn. Syst. 28 (2010), no. 4, 1635--1654.


\bibitem{Oh}
T.~Oh,
{\it A remark on norm inflation with general initial data for the cubic nonlinear Schr\"odinger equations in negative Sobolev spaces},
Funkcial. Ekvac. 60 (2017), no. 2, 259--277.


\bibitem{OS}
T.~Oh, C.~Sulem, 
{\it On the one-dimensional cubic nonlinear Schr\"odinger equation below $L^2$,}
Kyoto J. Math. 52 (2012), no. 1, 99--115.


\bibitem{OT}
T.~Oh, N.~Tzvetkov,
{\it Quasi-invariant Gaussian measures for the cubic fourth order nonlinear Schr\"odinger equation,}
Probab. Theory Related Fields 169 (2017), no. 3-4 1121--1168.


\bibitem{OTzW}
T.~Oh, N.~Tzvetkov, Y.~Wang,
{\it Solving the 4NLS with white noise initial data},
Forum Math. Sigma 8 (2020), Paper No. e48, 63 pp.


\bibitem{OW}
T.~Oh, Y.~Wang,
{\it Global well-posedness of the periodic cubic fourth order NLS in negative Sobolev spaces,}
Forum Math. Sigma 6 (2018), Paper No. e5, 80 pp.


\bibitem{OWill}
T.~Oh, Y.~Wang,
{\it On the ill-posedness of the cubic nonlinear Schr\"odinger equation on the circle},
An. \c{S}tiin\c{t}. Univ. Al. I. Cuza Ia\c{s}i. Mat. (N.S.) 64 (2018), no. 1, 53--84.


\bibitem{OW21}
T.~Oh, Y.~Wang,
{\it Normal form approach to the one-dimensional periodic cubic nonlinear Schr\"odinger equation in almost critical Fourier-Lebesgue spaces},
J. Anal. Math. 143 (2021), no. 2, 723--762.



\bibitem{STz}
J.-C.~Saut, N.~Tzvetkov,
{\it On periodic KP-I type equations},
Comm. Math. Phys. 221 (2001), no. 3, 451--476.


\bibitem{kihoon}
K.~Seong,
{\it Well-posedness and ill-posedness for the fourth order cubic nonlinear Schr\"odinger equation in negative Sobolev spaces},
J. Math. Anal. Appl. 504 (2021), no. 1, Paper No. 125342, 41 pp.



\bibitem{Schi}
R.~Schippa,
{\it On Strichartz estimates from decoupling and applications},
arXiv:1901.01177v2 [math.AP].


\bibitem{SS}
C.~Sulem, P.~L.~Sulem, 
{\it The nonlinear Schr\"odinger equations,}
Self-focusing and wave collapse,
Appl. Math. Sci., 139
Springer-Verlag, New York, 1999. xvi+350 pp.


\bibitem{SunTz}
C.~Sun, N.~Tzvetkov, 
{\it Gibbs measure dynamics for the fractional NLS},
SIAM J. Math. Anal. 52 (2020), no. 5, 4638--4704.


\bibitem{TT}
H.~Takaoka, Y.~Tsutsumi, 
{\it Well-posedness of the Cauchy problem for the modified KdV equation with periodic boundary condition,}
Int. Math. Res. Not. (2004), no. 56, 3009--3040.






\bibitem{Tao}
T.~Tao, 
{\it Nonlinear dispersive equations. Local and global analysis,}
CBMS Regional Conference Series
in Mathematics, 106. Published for the Conference Board of the Mathematical Sciences, Washington,
DC; by the American Mathematical Society, Providence, RI, 2006. xvi+373 pp.


\bibitem{Tsu}
Y.~Tsutsumi,
{\it $L^2$-solutions for nonlinear Schr\"odinger equations and nonlinear groups},
Funkcial. Ekvac. 30 (1987), no. 1, 115--125.


\bibitem{Tur}
S.~K.~Turitsyn, 
{\it Three-dimensional dispersion of nonlinearity and stability of multidimensional solitons,} 
Teoret. Mat. Fiz. 64 (1985), no. 2, 226--232.
\end{thebibliography}
\end{document}